\newtheorem{thm}{Theorem}[section]
\newtheorem{cor}[thm]{Corollary}
\newtheorem{lem}[thm]{Lemma}
\newtheorem{ex}[thm]{Example}
\newtheorem{definition}[thm]{Definition}
\newtheorem{rmk}[thm]{Remark}
\newtheorem{prop}[thm]{Proposition}
\newtheorem{conj}[thm]{Conjecture}
\begin{document}

\title{Enumerative Geometry of Del Pezzo Surfaces}


\author[Y.-S. Lin]{Yu-Shen Lin}
\address{Department of Mathematics, Boston University, 665 Commonwealth Ave, Boston, MA 02215}
\email{yslin@bu.edu}
\thanks{Y.-S. L. is supported by Simons collaboration grant \# 635846 and NSF grant DMS2204109.}


\subjclass[2020]{Primary 53D37}

\date{}

\dedicatory{}

\begin{abstract}
	 We prove an equivalence between the superpotential defined via tropical geometry and Lagrangian Floer theory for special Lagrangian torus fibres in del Pezzo surfaces constructed by Collins-Jacob-Lin \cite{CJL}. 
	We also include some explicit calculations for the projective plane, which confirm some folklore conjectures in this case.
\end{abstract}

\maketitle
  \section{Introduction}
Started from the celebrated work of Candelas-de la Ossa-Green-Parkes \cite{CDGP} computing the number of rational curves of a given degree in a generic quintic, mirror symmetry is believed to provide a new efficient way of computing enumerative invariants for Calabi-Yau manifolds. 
The mirror symmetry conjecture is further extended to Fano manifolds \cite{K2}. When $Y$ is a Fano manifold, its mirror is called the Landau-Ginzburg model $W:(\mathbb{C}^*)^n\rightarrow \mathbb{C}$, where the superpotential $W$ is some holomorphic function and $n=\mbox{dim}_{\mathbb{C}}Y$. The superpotential $W$ is expected to recover the enumerative geometry of $Y$. For instance, the quantum cohomology $QH(Y)$ is isomorphic to the Jacobian ring $Jac(W)$ \cite{B6}\cite{FOOO6} in the case of toric manifolds. In addition, the periods of $e^W$ with respect to suitable cycles recover the descendant Gromov-Witten invariants \cite{B7}\cite{G11}\cite{CCGGK} in the case of toric Fanos and thus known as the quantum periods. Due to the motivation from mirror symmetry, one would want to have a systematic way to write down the superpotential. In the toric Fano case, the superpotential is determined combinatorially by the toric polytope \cite{G10}\cite{HV}. It is further discovered by Cho-Oh \cite{CO} there are only positive Maslov index discs with boundaries on the moment map fibres and the superpotentials can be derived by the weighted count of Maslov index two discs with boundary on the moment map fibres and weighted by the symplectic area of holomorphic discs. 

In the pioneering work of Auroux \cite{A}, he studied the holomorphic discs with boundary on non-toric Lagrangian fibration constructed by Gross \cite{G2} outside of a singular anti-canonical divisor $D$ of some toric Fano manifold $Y$. In the presence of Maslov index zero discs, the superpotential is no longer constant with respect to the Lagrangian fibres. The loci in the base of Lagrangian fibration such that the corresponding fibres bound holomorphic discs of Maslov index zero form walls in the base. The superpotentials may jump when the Lagrangian boundary conditions vary across the walls, since Maslov index zero disc may glue together with a Maslov index two disc to a different Maslov index two discs on one side of the wall but not the other. This is further generalized to the case when the geometry has an equivariant $S^1$-action \cite{AAK}\cite{CLL}. 
When the anti-canonical divisor $D$ is smooth, which can be viewed as the deformations of the above results \cite{S9}. Auroux conjectured the existence of the special Lagrangians in the complement of the smooth anti-canonical divisor $X=Y\backslash D$ and had predictions on the SYZ mirror.  
The existence of the special Lagrangian was confirmed by Collins-Jacob-Lin \cite{CJL} in the case $Y$ is a del Pezzo surface. Furthermore, we will confirm the other half of the conjecture in \cite[Conjecture 7.3]{A}:
\begin{thm}(=Theorem \ref{115})
	There exists an open neighborhood $U$ such that the special Lagrangian torus fibre contained in $U$ bounds a unique holomorphic disc of Maslov index two disc contained in $U$. 
\end{thm}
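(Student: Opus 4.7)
The plan is to reduce the problem to a local model near the smooth anti-canonical divisor $D$ and then transfer the analysis to $(Y, J_Y)$ by Gromov compactness together with the implicit function theorem. By a symplectic neighborhood theorem, a tubular neighborhood of $D$ in $Y$ is symplectomorphic to a neighborhood of the zero section in the normal bundle $\pi : N_D \to D$. The special Lagrangian fibres $L$ sufficiently close to $D$, constructed in \cite{CJL}, are well approximated in this model by ``product'' tori $L_0 = \pi^{-1}(\gamma) \cap S_r$, where $\gamma \subset D$ is a non-contractible loop and $S_r \subset N_D$ is the circle bundle of small radius $r$. The obvious candidate Maslov index two disc is the fibre disc $\pi^{-1}(p) \cap \{|v| \leq r\}$ for $p \in \gamma$, which is holomorphic, has boundary on $L_0$, and meets the zero section $D$ transversely once; since $[-K_Y] = [D]$, its Maslov index is $2\,(u \cdot D) = 2$.

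To establish uniqueness within the model, let $u$ be any holomorphic disc with $u(\partial \mathbb{D}) \subset L_0$ and $u \cdot D = 1$. Composing with $\pi$ yields a holomorphic map $(\mathbb{D}, \partial \mathbb{D}) \to (D, \gamma)$; lifting to the universal cover $\mathbb{C} \to D$ produces a holomorphic disc in $\mathbb{C}$ whose boundary maps into a lift $\tilde{\gamma}$, which is a proper, non-closed arc because $\gamma$ is non-contractible. A Stokes calculation using the primitive $x\,dy$ of the flat area form on $\mathbb{C}$ then forces the area of the lifted disc to vanish, so $\pi \circ u$ is constant. Consequently $u$ lies in a single fibre of $N_D$, and within one fibre the degree one disc with circular boundary is unique up to reparametrization.

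To transfer to the genuine geometry, observe that after shrinking $U$, the integrable complex structure on $Y$ differs from the $N_D$ model by an arbitrarily small $C^\infty$ perturbation. The moduli space of Maslov index two discs contained in $U$ with boundary on $L$ is compact by Gromov compactness: the only potential limits are nodal configurations, but bubble components, whether spheres or further discs, are ruled out for $U$ small because their symplectic areas are bounded below while the area of a Maslov index two disc tends to zero as $L$ approaches $D$. The implicit function theorem then deforms the model disc to a genuine Maslov index two disc in $U$, and the uniqueness from the model transfers. The main obstacle is uniformity: one must control both the approximation error between the CJL special Lagrangian and the model torus and the lower area bound for bubbles uniformly as $L$ degenerates to $D$; this rests on the asymptotic expansion of the CJL metric and the explicit structure of the special Lagrangian fibration near the anti-canonical divisor.
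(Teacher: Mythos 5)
Your proposal follows essentially the same route as the paper: reduce to the normal-bundle (Calabi) model near $D$, identify the fibre disc as the unique Maslov index two disc there via projection to $D$ and the maximum principle, and transfer to the actual complex structure by a small deformation, ruling out bubbling and splitting through the fact that the area of the class $\beta_0$ tends to zero while any other disc class has area bounded below (the content of the paper's Lemma 5.15). The uniformity issues you flag at the end are exactly what the paper handles with its sequence of K\"ahler forms $\omega_i$ agreeing with $\omega_{TY}$ on exhausting subsets and the asymptotic comparison of $J$ with $J_{\mathcal{C}}$ from Hein--Sun--Viaclovsky--Zhang.
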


Inspired by the Strominger-Yau-Zaslow conjecture \cite{SYZ} and the related heuristic in symplectic geometry, Gross-Hacking-Keel \cite{GHK} constructed the mirrors for the pair $(Y,D)$, where $Y$ is a projective surface with $D\in |-K_Y|$ wheel of rational curves. One first construct an affine manifold with a single singularity from the intersection matrix of $D$. This can be viewed as the limit of the base of the special Lagrangian fibration on $X=Y\backslash D$. Gross-Hacking-Keel further introduced the notion of broken lines and theta functions as the weighted count of broken lines. The mirror can then be realized as the spectrum of the algebra generated by the theta functions. The construction is extended to the case when $D$ is smooth by Carl-Pumperla-Siebert \cite{CPS}. 
In this paper, we adapt the definition of the broken lines to the setting that $B$ is the affine manifold from the special Lagrangian fibration constructed in \cite{CJL} and with the complex affine structure. The efforts are particularly made to prove that there is finitely many broken lines representing a fixed relative class. Therefore, one can define the weighted counts of the broken lines. Moreover, we provide the geometric meaning for the weight of broken lines that confirms the original symplectic heuristic.  
\begin{thm}(=Theorem \ref{163}) \label{main thm}
	The weighted counts of the broken lines coincide with the countings of the Maslov index two discs, serve as the coefficients of the superpotential in Lagrangian Floer theory.
\end{thm}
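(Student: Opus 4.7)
The plan is to establish a bijective correspondence between Maslov index two holomorphic discs bounded by a special Lagrangian torus fibre $L_u$ and broken lines on the base ending at $u$, matching symplectic area with tropical length and Maslov zero disc counts with wall-crossing factors. This splits naturally into a tropicalization map (from holomorphic discs to broken lines), a realization map (from broken lines to holomorphic discs), and a weight comparison.

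For the tropicalization direction, I would exploit the SYZ-type structure of the Collins-Jacob-Lin fibration: outside a small tubular neighborhood of $D$ the fibration is close to its semi-flat model, so any Maslov index two disc with boundary on $L_u$ decomposes into an asymptotically cylindrical body projecting to piecewise-linear edges in $B$, together with a cap near $D$. The uniqueness of the local disc provided by Theorem~\ref{115} identifies the unique unbounded edge ending at $D$, and bubbling analysis at each wall --- where the fibre bounds a rigid Maslov zero disc --- forces the projection to bend according to the primitive class of that disc, yielding a canonical broken line in the correct relative class.

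For the realization direction, I would proceed inductively along a broken line starting from its leaf end, where Theorem~\ref{115} furnishes a seed disc, and at each bend glue in the Maslov zero disc indexed by the wall. Since the special Lagrangian condition together with the del Pezzo geometry ensures the relevant Maslov zero discs are rigid and unobstructed, the standard Lagrangian Floer gluing theorem produces a genuine holomorphic disc in the expected relative class; transversality is automatic because Maslov index two discs form a zero-dimensional moduli in each fixed class. The weight comparison is then a direct computation: each straight segment contributes an exponential in the integral of the symplectic form over the associated cylinder --- equal to the tropical length in the complex affine structure --- while each bend contributes the Maslov zero disc count appearing in the scattering automorphism. Combined with the finiteness of broken lines in a fixed class established earlier in the paper, the weighted sums on both sides agree term by term.

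The hardest step will be the gluing and degeneration analysis showing that the two constructions are mutually inverse, with no phantom or missing solutions. Ruling out hidden Maslov index two discs whose tropicalization is degenerate, and verifying that the inductive gluing genuinely produces a holomorphic rather than merely approximate solution, requires that the deviation from the semi-flat model decays sufficiently fast near $D$ so that tropical skeletons lift unambiguously to holomorphic objects; this is precisely where the specific analytic properties of the Collins-Jacob-Lin construction become indispensable.
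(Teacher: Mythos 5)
Your proposal takes a genuinely different route from the paper, and it contains gaps that the paper's argument is specifically designed to avoid. You aim for a geometric bijection: a tropicalization map sending each Maslov index two disc to a broken line, and a realization map gluing a seed disc with rigid Maslov zero discs at each bend. Neither map is constructed in the paper, and both are substantially harder than what the paper actually does. The paper never lifts an individual broken line to an individual holomorphic disc; instead it proves the numerical identity $n_{\beta}(u)=n^{trop}_{\beta}(u)$ by induction on the maximal length $n(\beta;u)$ of broken lines (finite by Lemma \ref{well-defined}), with base case $\beta=\beta_0$ handled by Theorem \ref{115}, and with the inductive step carried by two facts: the correspondence theorem for Maslov index \emph{zero} discs (Theorem \ref{116}), and the observation that the Floer-theoretic superpotential and the tropical superpotential satisfy the \emph{same} wall-crossing formula, $W(u)=\mathcal{K}_{\gamma}W(u')$ on the Floer side via Fukaya's trick and pseudo-isotopy of $A_{\infty}$ structures, and the scattering automorphism on the tropical side by the definition of broken lines. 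Two generating functions with the same initial condition at infinity and the same jumps across the same walls must agree; no disc-by-disc matching is needed.

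The concrete gaps in your version are these. First, your claim that transversality is automatic because Maslov index two discs form a zero-dimensional moduli space in each class is false: zero virtual dimension does not give regularity, and precisely at the walls the moduli space $\mathcal{M}_{1,\beta}(X,L_u)$ acquires real codimension one boundary from configurations $\beta=\beta'+\sum_i\gamma_i$ with $\gamma_i$ of Maslov index zero, which is why $n_{\beta}(u)$ is only chamber-wise defined and is extracted from Kuranishi structures via the smooth correspondence $Corr_*$ rather than by a naive count. Second, the realization step (gluing the seed disc with the Maslov zero discs at each bend to produce an honest holomorphic disc, and showing no phantom or missing solutions) is exactly the hard adiabatic-limit analysis that you yourself flag as the difficult point; it is not carried out in this paper and cannot be waved through by citing decay to the semi-flat model. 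Third, your area bookkeeping glosses over the fact that the special Lagrangian fibres are not Lagrangian for any fixed K\"ahler form on $Y$; the paper constructs the sequence $\omega_i$ with $[\omega_i]=k_ic_1(Y)$ (Lemmas \ref{6} and \ref{bound}) precisely so that the symplectic areas entering the superpotential are well defined and stabilize. If you want a proof along the paper's lines, replace the bijection by the wall-crossing comparison; if you insist on the geometric bijection, you must supply the gluing and degeneration theorems, which go well beyond the tools assembled here.
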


In the standard toric case, the naive SYZ mirror of the moment toric fibration only gives an bounded open subset of the Landau-Ginzburg mirror. Hori-Vafa explained that there is a renormalization procedure to obtain the LG mirror. Auroux predicted that the renormalization corresponds to scaling the symplectic form $kc_1(Y)$, $k\rightarrow \infty$. This is indeed hidden in the proof of Theorem \ref{main thm}: the special Lagrangian torus with respect to the Tian-Yau metric $\omega_{TY}$ in $X$ are not Lagrangian with respect to arbitrary symplectic form on $Y$. To make sense of the superpotential, we proved that there exists a sequence of  K\"ahler forms $\omega_i$ on $Y$ such that $[\omega_i]=k_ic_1(Y)$ with $k_i\rightarrow \infty$ and every special Lagrangian fibre with respect to the Tian-Yau metric in $X$ is Lagrangian with respect to $\omega_i$ for $i\gg 1$. Moreover, the coefficients of the superpotential is well-defined and stabilized. In other words, Theorem \ref{main thm} provides a rigorous mathematical realization of the renormalization procedure of Hori-Vafa. See \cite[Remark 5.11]{FOOO9} for another treatment of renormalization. 


A priori, the integral affine manifold used in the Gross-Siebert program is unclear to be the same as the one from the base of special Lagrangian fibration. In the case of $Y=\mathbb{P}^2$, Lau-Lee-Lin \cite{LLL} prove that the affine structure used in Carl-Pumperla-Siebert coincides with the complex affine structure of the special Lagrangian fibration constructed by Collins-Jacob-Lin. With the explicit affine structure, we provide some explicit calculation of the enumerative invariants and explains the relation with the previous works of Bousseau \cite{B5}, Vianna \cite{V3}, Graefnitz \cite{G8}. In particular, the interaction of the results in the current paper with the previous works confirm several folklore conjectures in this setting. 

The arrangement of the paper is as follows: In Section 2, we review the special Lagrangian fibration constructed in \cite{CJL} and its properties. We review the tropical geometry on the base of the special Lagrangian with the complex affine structure in Section 3. Then in Section 4 we establish the tropical/holomorphic correspondence for Maslov index zero discs with boundary on special Lagrangian fibres. Then we prove the equivalence of counting of Maslov index two discs with the same boundary conditions with the weighted count of broken lines in Section 5. This includes an explanation of the renormalization process of Hori-Vafa \cite{HV}. 
In Section 6, we speculate the relation of certain open Gromov-Witten invariants of Maslov index zero discs with relative Gromov-Witten invariant with maximal tangency. In Section 7, we provide some explicit calculation in the case of $\mathbb{P}^2$ and connect to other people's work.  

\section*{Acknowledgment} 
The author would like to thank S.T. Yau for the constant support and encouragement. The author would like to thank Man-Wai Cheung, Paul Hacking, Siu-Cheong Lau, Tsung-Ju Lee, Cheuk-Yu Mak for helpful discussions. The author particularly want to thank Pierrick Bousseau for the discussions back in 2016 at MSRI. The calculations in Example \ref{168} is done together with P. Bousseau at the time.

\section{Special Lagrangian Fibration on Del Pezzo Surfaces} \label{91}
Let $Y$ be a del Pezzo surface or a rational elliptic surface and $D=s^{-1}(0)$ be a smooth anti-canonical divisor, where $s\in H^0(Y,K_Y(D))$. By adjunction formula, $D$ is an elliptic curve and there exists a hermitian metric $h$ of $N_{D/Y}$ with Chern class $c_1(Y)\cdot D$.  Tian-Yau constructed a Ricci-flat metric $\omega_{TY}$ on the complement $X=Y\backslash D$ \cite{TY1}. Moreover, together with the results of Hein \cite{H}, one has the asymptotic of the Tian-Yau metric near $D$:
\begin{align}\label{asymptotics1}
	\omega_{TY}\sim i\partial\bar{\partial}(\log{|s|_h^2})^{\frac{3}{2}}.
\end{align}
Locally, we write $h=e^{-\phi}$ and $w$ a complex coordinate defining $D=\{w=0\}$. Then explicitly we have the following asymptotic:
\begin{align}\label{asmptotics2}
	\omega_{TY}\sim \frac{1}{2}(-\log{|w|^2e^{-\phi}})^{-\frac{1}{2}}\big( \frac{dw}{w}+\partial \phi \big)\wedge \big(\frac{d\bar{w}}{\bar{w}}+\bar{\partial}\phi\big)+(\log{|w|^2e^{-\phi}})^{\frac{1}{2}}\pi^*\omega_D,
\end{align} where $\omega_D$ is the unique Ricci-flat metric on $D$ with K\"ahler class $c_1(N_{D/Y})$. We will refer the reader to \cite[Section 4.1]{CJL} for explicit calculation and more details about the geometry of Calabi model.
The asymptotics of the Tian-Yau metric is important for the later studies of the holomorphic discs near $D$ and also the the affine structures.

Auroux \cite{A} conjectured the existence of special Lagrangian fibration on $X$ when $Y=\mathbb{P}^2$ or $Y$ is a rational elliptic surface. The conjecture is proved generally for del Pezzo surfaces by Collins-Jacob-Lin.
\begin{thm}\cite{CJL} \label{92}
	Let $Y$ be a del Pezzo surface or rational elliptic surface and $D\in |-K_Y|$ be a smooth anti-canonical divisor. Then there exists a special Lagrangian fibration on $X=Y\backslash D$ with a special Lagrangian section and the base is diffeomorphic to $\mathbb{R}^2$. Moreover, the monodromy at infinity is conjugate to $\begin{pmatrix}1 & d \\0 & 1\end{pmatrix}$.
\end{thm}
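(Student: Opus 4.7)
The strategy is to use the hyperK\"ahler structure on $X$ to reduce the theorem to constructing a holomorphic elliptic fibration in a rotated complex structure. The adjoint section $s \in H^0(Y, K_Y(D))$ restricts to a nowhere-vanishing holomorphic $(2,0)$-form $\Omega$ on $X$, and together with the Ricci-flat K\"ahler form $\omega_{TY}$ this realizes $(X,\omega_{TY},\Omega)$ as a hyperK\"ahler four-manifold. In complex dimension two a special Lagrangian torus fibration for $(\omega_{TY},\Omega)$ is precisely, after hyperK\"ahler rotation, a holomorphic elliptic fibration for the rotated complex structure $J'$ determined by the holomorphic symplectic form $\Omega' = \omega_{TY} + i\,\mathrm{Im}\,\Omega$, with special Lagrangian sections corresponding to holomorphic sections. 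It therefore suffices to produce a proper holomorphic map $(X,J') \to B$ with elliptic curve fibers, with $B \cong \mathbb{R}^2$, together with a holomorphic section.

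First I would analyze the end of $X$ via the Tian--Yau asymptotic (\ref{asmptotics2}), which is the Calabi ansatz on a punctured tubular neighborhood of the zero section of $N_{D/Y}$. For the Calabi model the hyperK\"ahler rotation can be written down explicitly, and the $T^2$-action combining the fiberwise $S^1$ on $N_{D/Y}$ with translations along $D$ gives a manifest special Lagrangian torus fibration on the end; the rotated complex structure $J'$ agrees, to leading order, with the natural complex structure on a Kodaira neighborhood of an $I_d$ singular fiber of a rational elliptic surface, where $d = D \cdot D = K_Y^2$. This produces the desired fibration on a neighborhood of infinity and already pins down the monodromy conjugacy class asserted in the theorem, since the monodromy of a smooth elliptic fibration around an $I_d$ fiber is conjugate to the displayed unipotent matrix.

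The main step is then to extend this local fibration globally. I would adjoin the $I_d$ divisor at infinity to $(X,J')$ to form a compact complex surface $\check Y$, verify that it is K\"ahler, and use Chern-number computations together with Kodaira's classification of compact complex surfaces to identify $\check Y$ as a rational elliptic surface whose fiber at infinity is the added $I_d$. The induced elliptic fibration $\check Y \to \mathbb{P}^1$ restricts on $X$ to a proper elliptic fibration over $\mathbb{P}^1\setminus\{\infty\}\cong \mathbb{R}^2$; a holomorphic section exists because every rational elliptic surface admits one, and hyperK\"ahler rotation transports everything back to the original structure to yield the special Lagrangian fibration and section.

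The hardest step is expected to be the global construction of the fibration, i.e.\ producing enough global holomorphic sections of the fiber line bundle on $\check Y$ to define the map to $\mathbb{P}^1$ away from the Calabi end. This cannot be done from the local model alone and requires genuinely global analytic input. One natural route is a Torelli-type identification of the period of $(X,J')$ with that of an explicit rational elliptic surface obtained by blowing up $\mathbb{P}^2$ in nine points lying on an elliptic curve isomorphic to $D$; alternatively a continuity method, deforming the complex structure through a one-parameter family where the fibration is known to exist (e.g.\ a degeneration from a standard Weierstrass fibration), should work. Once $\check Y$ is identified as a rational elliptic surface, all remaining topological statements (base diffeomorphic to $\mathbb{R}^2$, monodromy at infinity) are immediate.
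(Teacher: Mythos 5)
Your overall architecture is consistent with what is true here (the hyperK\"ahler rotation $\check\Omega=\omega-i\,\mathrm{Im}\,\Omega$, $\check\omega=\mathrm{Re}\,\Omega$ does turn the special Lagrangian fibration into an elliptic fibration, and the rotated surface does compactify to a rational elliptic surface with an $I_d$ fibre at infinity, $d=K_Y^2$ --- this is exactly Theorem \ref{res}), but your order of operations inverts the paper's and leaves the crucial existence statement unsupported. The paper's route is: (i) write down the \emph{model} special Lagrangian tori on the Calabi end $X_{\mathcal C}$ (constant-norm $S^1$-bundles over special Lagrangian circles in $D$); (ii) do the hard analysis --- convergence of Lagrangian mean curvature flow in this degenerate geometry --- to upgrade these approximate special Lagrangians to genuine special Lagrangian tori in $(X,\omega_{TY},\Omega)$; (iii) hyperK\"ahler rotate the resulting tori into holomorphic elliptic curves and use the deformation theory of $J$-holomorphic curves (a torus of square zero in a hyperk\"ahler surface moves in a family) plus Gromov compactness to show the family sweeps out all of $X$. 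The compactification is a consequence of having the fibration, not the source of it.

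The concrete gap in your proposal is the step ``adjoin the $I_d$ divisor at infinity to $(X,J')$ to form a compact complex surface $\check Y$.'' To glue in a divisor at infinity you need the end of $(X,J')$ to be \emph{biholomorphic} to a deleted neighborhood of an $I_d$ fibre, whereas the Tian--Yau metric is only \emph{asymptotic} to the Calabi ansatz (\ref{asmptotics2}), so $J'$ only agrees with the model complex structure to leading order, as you yourself say. Upgrading ``asymptotic'' to ``biholomorphic near infinity'' is essentially equivalent to first producing genuine (special Lagrangian, hence after rotation holomorphic) tori near infinity --- i.e.\ to the analytic work you have deferred --- so as written the argument is circular: the compactification presupposes the fibration near infinity that you intend to extract from it. Your fallback suggestions (a Torelli-type identification of periods, or a continuity method through a family of Weierstrass fibrations) are not arguments but restatements of the problem; in particular the period/Torelli description of these surfaces is only available once one already knows the end is standard. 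Everything downstream of the compactification (Riemann--Roch on $|-K_{\check Y}|$ to get the pencil, the existence of a $(-1)$-curve section, the $I_d$ monodromy) is fine, but it all hangs on that unestablished step. To repair the proof you would need to replace the compactification step with the existence of at least one genuine special Lagrangian torus in $X$ near infinity, obtained by perturbing the Calabi-model tori (via mean curvature flow or a quantitative implicit function theorem), which is precisely the content of the cited work.
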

We will use $B$ to denote the base of the fibration and the fibration to be $\pi:X\rightarrow B$. 
\begin{thm} \cite{CJL} \label{res}
	Let $\check{X}$ be the same underlying manifold of $X$ with K\"ahler form $\check{\omega}$ and holomorphic volume form $\check{\Omega}$ given by 
	\begin{align}\label{HK rel}
		\check{\omega}&= \mbox{Re}\Omega,\notag \\
		\check{\Omega}&= \omega-i\mbox{Im}\Omega.
	\end{align}
	This can be compactified to a rational elliptic surface $\check{Y}$ by adding an $I_d$ fibre $\check{D}$, where $d=(-K_Y)^2$. 
\end{thm}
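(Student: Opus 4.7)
The plan is to exploit hyperk\"ahler rotation to convert the special Lagrangian fibration $\pi:X\to B$ of Theorem~\ref{92} into a holomorphic elliptic fibration on $\check{X}$, and then to use the explicit Calabi-model asymptotics \eqref{asymptotics1}–\eqref{asmptotics2} to recognize the boundary geometry as an $I_d$ singular fiber, which we then glue in.

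First, I would verify that $\check\pi:=\pi:\check{X}\to \check{B}$ is a holomorphic fibration with respect to the rotated complex structure $\check{J}$ coming from $\check{\Omega}$. The special Lagrangian condition for $\pi$ relative to $(\omega,\Omega)$ says that each fibre $L$ satisfies $\omega|_L=0$ and $\mbox{Im}\,\Omega|_L=0$, which, by \eqref{HK rel}, is exactly the statement that $\check{\Omega}|_L=0$; thus each fibre is a complex curve with respect to $\check{J}$. Since the fibres are tori carrying a Ricci-flat metric (the restriction of the hyperk\"ahler metric), they are elliptic curves. The base $\check{B}$ acquires a complex structure in the obvious way so that $\check{\pi}$ becomes holomorphic. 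The existence of a special Lagrangian section of $\pi$ gives a holomorphic section of $\check{\pi}$.

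Next, I would analyze the asymptotic geometry near $D$ using \eqref{asmptotics2} and the Calabi ansatz. Away from $D$, the Tian--Yau metric is approximated by the semi-flat metric over the punctured disc base, and a computation in the Calabi model (Section~4.1 of \cite{CJL}) shows that, after hyperk\"ahler rotation, a neighborhood of the boundary in $\check{X}$ is biholomorphic to a punctured neighborhood of the Ooguri--Vafa/Greene--Shapere--Vafa--Yau model of an $I_d$ fibre. The key invariant to match is the monodromy of $H_1$ of the fibre around infinity, which Theorem~\ref{92} identifies as $\begin{pmatrix}1 & d \\ 0 & 1\end{pmatrix}$: this is exactly the monodromy of a Kodaira $I_d$ fibre, and the integer $d$ is determined by the topological pairing $(-K_Y)^2=D\cdot D=\deg N_{D/Y}$, which in turn controls the twisting of the semi-flat model.

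I would then compactify $\check{X}$ by gluing in the standard $I_d$ model: the base $\check{B}$ is completed by adding a single point at infinity to obtain a curve biholomorphic to $\mathbb{P}^1$, and over this point one glues in a cycle of $d$ rational curves in the Kodaira normal form for $I_d$. The ellipic fibration and its section extend across this boundary, producing a compact complex surface $\check{Y}$ with a holomorphic elliptic fibration $\check{Y}\to\mathbb{P}^1$. Smoothness of $\check{Y}$ and the holomorphicity of the extended fibration follow from the asymptotic match with the $I_d$ model. Finally, rationality is deduced from the existence of a section together with the computation of numerical invariants: any relatively minimal elliptic surface over $\mathbb{P}^1$ with a section and Euler number $12$ is rational, and the Euler number identity $\chi(\check{Y})=12$ can be checked by summing Euler contributions of the singular fibres (an $I_d$ at infinity plus the interior singular fibres, whose total count is forced to be $12-d$).

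The main obstacle is the asymptotic identification in the second step: one must upgrade the metric asymptotics \eqref{asmptotics2} to a $C^\infty$ (or at least $C^{k}$ for large $k$) match with the $I_d$ model so that the complex structure $\check{J}$ extends holomorphically across the new boundary divisor, rather than only matching the monodromy and topology. This is where the careful Calabi-model analysis of \cite{CJL} and the decay rates of Hein for the Tian--Yau metric are essential; with these in hand, the extension of $\check{J}$ and the verification that the completion is a smooth projective surface become standard.
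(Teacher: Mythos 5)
This theorem is imported from \cite{CJL}; the paper offers no proof of its own, only the sketch following Theorem \ref{92} (Calabi model near $D$, hyperK\"ahler rotation, asymptotics of the Tian--Yau metric). Your proposal follows essentially that same route --- rotating the special Lagrangian fibration into a holomorphic elliptic fibration, matching the end with the $I_d$ model built from $N_{D/Y}$ via the monodromy $\bigl(\begin{smallmatrix}1&d\\0&1\end{smallmatrix}\bigr)$, and concluding rationality from $\chi(\check{Y})=\chi(Y)-\chi(D)+\chi(I_d)=12$ --- and you correctly isolate the real technical content, namely upgrading the metric asymptotics to a holomorphic identification so that $\check{J}$ extends across the added fibre.
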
	
We now sketch the idea of the proof of Theorem \ref{92} since that will help to understand the geometry needed for later sections. Let $Y_{\mathcal{C}}$ be a small neighborhood of zero section of the total space of the normal bundle $N_{D/Y}$ and $X_{\mathcal{C}}$ be the complement of the zero section in $Y_{\mathcal{C}}$. There exists a natural holomorphic volume form $\Omega_{\mathcal{C}}$ and Ricci-flat metric $\omega_{\mathcal{C}}$ invariant under the $S^1$-action along the fibres on $X_{\mathcal{C}}$. Choose a special Lagrangian $l$ on $D$ and it is straight-forward to check that the constant norm $S^1$-bundle over $l$ is a special Lagrangian torus in $X_{\mathcal{C}}$. In particular, a choice of special Lagrangian fibration in $D$ can be lifted to a special Lagrangian fibration on $X_{\mathcal{C}}$. These are the model special Lagrangian fibration near infinity. The difficulty is to prove the convergence of Lagrangian mean curvature flow under this setting with degenerate geometry. Then one can use hyperK\"ahler rotation and the theory of $J$-holomorphic curves to prove that the deformation of the limiting special Lagrangian tori from Lagrangian mean curvature flow sweep out $X$ and thus form a special Lagrangian fibration on $X$. 
\subsection{Affine Structures on the Base of SYZ Fibrations}   \label{affine structure}  
It is a standard fact by Hitchin \cite{H2} that 
the base $B$ of the special Lagrangian fibration naturally admits structures of affine manifold with singularities , i.e., the transition functions away from the discriminant locus fall in $GL(n,\mathbb{Z})\rtimes \mathbb{R}^n$. The base is where tropical curves/discs live and we will discuss later.

The affine structures are defined as follows:
assume that $X$ is a complex $n$-fold admitting a nowhere vanishing holomorphic $n$-form $\Omega$. 
We will denote $L_u$ the fibre over $u\in B$. 
Let $B_0$ be the complement of the discriminant of the fibration. 
Then $\cup_{u\in B_0}H_2(X,L_u;\mathbb{Z})$ forms a local system of lattices over $B_0$. By choosing relative classes $\gamma_1,\cdots,\gamma_n$ as flat local sections in an open neighborhood on $B_0$ such that their boundaries generate the first homology of fibres, then the functions $\mbox{Im}\int_{\gamma_i}\Omega$ give the integral affine coordinates on the neighborhood. This is known as the complex affine structure when $X\rightarrow B$ is a special Lagrangian fibration from the aspects of mirror symmetry. In particular, this gives a natural identification of $H^1(L_u,\mathbb{R})\cong T_uB_0$. Notice that this identification gives the opposite orientation of $T_uB_0$. Due to the natural pairing on $H_1(L_u,\mathbb{R})$ induces an isomorphism $H_1(L_u,\mathbb{R})\cong H^1(L_u,\mathbb{R})$, we will use the identifications freely later.      

The following observation reduces the complication of tropical geometry discussed in the next section. 
\begin{lem}\label{generic}
	For a generic choice of $(Y,D)$, the singular fibres of the special Lagrangian fibration of $X=Y\backslash D$ in Theorem \ref{92} are either simple nodal curves or cusp curves.
\end{lem}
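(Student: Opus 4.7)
The plan is to translate the question into the classical theory of elliptic fibrations via the hyperK\"ahler rotation of Theorem \ref{res}. After rotation, the special Lagrangian fibration $\pi: X \to B$ becomes a holomorphic elliptic fibration $\check{\pi}: \check{Y} \to \mathbb{P}^1$ on a rational elliptic surface, with a prescribed $I_d$ singular fiber $\check{D}$ at infinity, where $d = (-K_Y)^2$. The singular fibers of $\pi$ correspond bijectively to the singular fibers of $\check{\pi}$ other than $\check{D}$, so it suffices to show that for generic $(Y,D)$, the remaining singular fibers of $\check{\pi}$ are all of Kodaira type $I_1$ (nodal rational curves) or $II$ (cuspidal rational curves).

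By Kodaira's classification of singular fibers of elliptic surfaces, the only irreducible singular fiber types are $I_1$ and $II$; every other type ($I_n$ with $n\geq 2$, $I_n^*$, $II^*$, $III$, $III^*$, $IV$, $IV^*$) is reducible. Thus I need to argue that reducible singular fibers apart from $\check{D}$ can be avoided by choosing $(Y,D)$ generically. Passing to a Weierstrass model $y^2 = 4x^3 - g_2(t)x - g_3(t)$ for $\check{\pi}$, trivialized on $\mathbb{P}^1\setminus\{\infty\}$, the finite singular fibers sit at zeros of the discriminant $\Delta = g_2^3 - 27 g_3^2$. A reducible singular fiber at a finite point forces $\Delta$ to vanish there to order $\geq 2$, which is a codimension $\geq 1$ condition on the coefficients of $g_2$ and $g_3$. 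Since $(g_2, g_3)$ vary algebraically with $(Y,D)$, the locus of bad pairs is a proper Zariski-closed subset of the moduli of pairs, and a generic $(Y,D)$ lies in its complement.

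The main obstacle is controlling the moduli correspondence precisely: one must verify that the natural map from the moduli of pairs $(Y,D)$ to the moduli of rational elliptic surfaces with a marked $I_d$ fiber is dominant, or at least that deforming $(Y,D)$ perturbs $(g_2, g_3)$ transversely to each bad stratum. A dimension count matching del Pezzo pairs against rational elliptic surfaces with an $I_d$ fiber (modulo the $\mathbb{C}^*$-rescaling $g_i \mapsto \lambda^i g_i$) should suffice; once transversality is in place, a standard Bertini-type argument produces the desired generic open set and hence the lemma.
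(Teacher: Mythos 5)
Your reduction is the same as the paper's: hyperK\"ahler rotate via Theorem \ref{res}, invoke Kodaira's classification, and observe that the only irreducible singular fibres are nodal ($I_1$) and cuspidal ($II$), so the task is to rule out reducible fibres away from $\check{D}$ for generic $(Y,D)$. After that the routes diverge, and yours has a genuine gap that you yourself flag: the dominance (or transversality) of the correspondence from pairs $(Y,D)$ to Weierstrass data $(g_2,g_3)$ is not a bookkeeping step to be dispatched by ``a dimension count should suffice'' --- it \emph{is} the content of the lemma. A dimension count alone cannot establish that deforming $(Y,D)$ moves $(g_2,g_3)$ transversely to the discriminantal strata; an algebraic family can have the right dimension and still land inside a bad locus. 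Without that verification your Bertini-type argument has nothing to bite on, so the proof is incomplete precisely at the point where all the work lies.

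The paper closes this gap with the Torelli theorem for rational elliptic surfaces rather than a Weierstrass model. A reducible fibre contains a component of self-intersection $-2$, i.e.\ a $(-2)$-class $\alpha$ in the lattice $Q=\{\alpha\in \mathrm{Pic}(\check{Y}) \mid \alpha\cdot\check{D}_i=0\}$, and the period map $\exp\bigl(2\pi i\int\check{\Omega}\bigr):H_2(\check{X})\rightarrow\mathbb{C}^*$ identifies the moduli of such surfaces with $\mathrm{Hom}(Q,\mathbb{C}^*)$; the existence of a reducible fibre is equivalent to the vanishing of the period $\int_\alpha\check{\Omega}$ on some $(-2)$-class. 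Since $\check{\Omega}=\mathrm{Im}\,\Omega+i\omega$ with $\omega$ exact on $X$, this becomes the real-analytic condition $\int_\alpha\mathrm{Im}\,\Omega=0$ (after normalizing against the fibre class), the zero locus of a non-zero harmonic function on the moduli of $(Y,D)$ --- a proper analytic Zariski-closed subset. This is exactly the dominance statement you needed, packaged as surjectivity of the period map. Separately, note a small imprecision in your discriminant criterion: a type $II$ cusp also has $\mathrm{ord}(\Delta)=2$, so ``$\Delta$ vanishes to order $\geq 2$'' does not characterize reducible fibres; avoiding all double zeros of $\Delta$ would prove the stronger statement that all fibres are $I_1$, which still implies the lemma but only once dominance is in hand.
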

\begin{proof}
	From Theorem \ref{res}, it suffices to prove that the corresponding rational elliptic surface $\check{Y}$  has only irreducible singular fibres from Kodaira's classification of singular fibres of minimal elliptic fibration. In particular, these components of reducible fibres have self-intersection $-2$. We will first briefly recall the Torelli theorem for rational elliptic surfaces \cite{M5}\cite{Hacking}. Let $\check{D}_i$ be the irreducible components of $\check{D}$ and 
	\begin{align*}
		Q=\{\alpha\in \mbox{Pic}(\check{Y})|\alpha\cdot\check{D}_i=0\}.
	\end{align*}
	Consider the long exact sequence of pairs with coefficients in $\mathbb{Z}$. 
	\begin{align*}
		0=H_3(Y)\rightarrow H_3(\check{Y},\check{X})\rightarrow H_2(\check{X})\rightarrow H_2(\check{Y})\rightarrow H_2(\check{Y},\check{X}).
	\end{align*} From Poincare duality $H_k(\check{Y},\check{X})\cong H^{4-k}(\check{D})$, one has the short exact sequence 
	\begin{align*}
		0\rightarrow H^1(\check{D})\rightarrow H_2(\check{X})\rightarrow Q\rightarrow 0. 
	\end{align*}  Denote the image of the generator of $H^1(\check{D})$ by $\delta$ and normalize $\int_{\delta}\check{\Omega}=1$. Then the period domain of rational elliptic surface $\mbox{Hom}(Q,\mathbb{C}^*)$ can be interpreted as the periods $\exp{\big(2\pi i\int\check{\Omega}\big)}:H_2(\check{X})\rightarrow \mathbb{C}^*$ \cite{L15}\cite{GHK2} (or the rational elliptic surface case \cite{B8}). Similarly, the periods of $\Omega$ integrating over cycles in $H_2(X)$ describe the moduli space of $(Y,D)$ . 
	Therefore, it suffices to prove that for the generic choice of $(Y,D)$, the period $\int_{\alpha} \check{\Omega}$ is non-zero on any $(-2)$-class $\alpha\in H_2(\check{X})$. Since $\check{\Omega}=\mbox{Im}\Omega+i\omega$ and $\omega$ is exact on $X$, it is equivalent to $\int_{\alpha}\mbox{Im}\Omega=0$ under the normalization $\int_{[L]}\Omega\in \mathbb{R}$, where $[L]\in H_2(X)$ is the fibre class. In other words, $\int_{\alpha}\Omega$ 
	and $\int_{[L]}\Omega$ have the same phase. The later is the zero locus of a non-zero harmonic function and thus analytic Zariski closed inside the moduli space of pairs of del Pezzo surface with a smooth anti-canonical divsior. 
\end{proof}

\subsection{Some Useful Consequence of the Asymptotics}
Here we collect some implications of the asymptotics of the Tian-Yau metric in (\ref{asymptotics1}).

The first lemma has its own interest in differential geometry: this interprets the Tian-Yau metric as the large volume limit of K\"ahler forms of classes along the multiple of $c_1(Y)$. 
\begin{lem} \label{6}
	There exist a sequence of increasing open subsets $\mathcal{U}_k$ of $Y\backslash D$ and K\"ahler forms $\omega_k$ of $Y$ such that 
	\begin{enumerate} 
		\item $\mathcal{U}_k\subseteq \mathcal{U}_{k+1}$ and $\cup_k \mathcal{U}_k=X$,
		\item $\omega_k|_{\mathcal{U}_k}=\omega_{TY}$ coincides with the Tian-Yau metric.
		\item $[\omega_k]$ is a multiple of $c_1(Y)\in H^2(Y,\mathbb{R})$. 
	\end{enumerate} In particular, the Tian-Yau metric can be viewed as of class "$\infty[c_1(Y)]$". More precisely, 
	\begin{align*}
		\lim_{i\rightarrow \infty}\omega_i|_{X}=\omega_{TY}. 
	\end{align*}     
	
\end{lem}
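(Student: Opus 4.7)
The plan is to realize each $\omega_k$ in the form $\omega_k=\lambda_k\omega_Y+i\partial\bar\partial\Psi_k$, where $\omega_Y$ is a fixed reference K\"ahler form on $Y$ in class $c_1(Y)$, $\lambda_k\to\infty$ is a sequence of positive reals, and $\Psi_k:Y\to\mathbb{R}$ is a smooth function tuned so that $\omega_k$ coincides exactly with $\omega_{TY}$ on $\mathcal{U}_k$. Fix a Hermitian metric $h$ on $-K_Y$ with positive curvature $\omega_Y:=-i\partial\bar\partial\log h$, and let $s$ be the defining section of $D$, so that $\eta:=-\log|s|_h^2$ satisfies $\omega_Y=i\partial\bar\partial\eta$ on $X$. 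Since $c_1(Y)|_X=0$ in $H^2(X,\mathbb{R})=H^2(Y,\mathbb{R})/\mathbb{R}\,c_1(Y)$, both $[\omega_{TY}]|_X$ and $[\omega_Y]|_X$ vanish, so we may write $\omega_{TY}=\omega_Y+i\partial\bar\partial u$ on $X$ for some smooth $u$ which, by (\ref{asymptotics1})--(\ref{asmptotics2}), satisfies $u=\eta^{3/2}+O(\eta^{1/2})$ as $p\to D$.

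Choose $\eta_k\nearrow\infty$, $\lambda_k\geq\tfrac{3}{2}\eta_k^{1/2}$ with $\lambda_k\to\infty$, and set $\mathcal{U}_k:=\{\eta<\eta_k\}$. On $\mathcal{U}_k$ define $\Psi_k:=u+(1-\lambda_k)\eta$, so that
\begin{equation*}
\omega_k:=\lambda_k\omega_Y+i\partial\bar\partial\Psi_k=\omega_Y+i\partial\bar\partial u=\omega_{TY}
\end{equation*}
on $\mathcal{U}_k$. To extend $\Psi_k$ smoothly across $D$, I use the asymptotic form of $u$ and, in an annular region $\{\eta_k\leq\eta\leq\eta_k+1\}$, smoothly interpolate $\Psi_k$ to a radial profile $g_k(\eta)$ that is $C^\infty$-matched to $u+(1-\lambda_k)\eta$ at $\eta_k$, convex in $\eta$, monotonically decreasing, and stabilizes to a constant as $\eta\to\infty$ (thus extending by that constant across $D$). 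The resulting $\Psi_k$ is globally smooth on $Y$.

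With these choices, properties (1)--(3) follow immediately: the exhaustion is clear from $\eta_k\nearrow\infty$; $\omega_k|_{\mathcal{U}_k}=\omega_{TY}$ holds by construction; and because $\Psi_k$ is a globally smooth function on $Y$, $[\omega_k]=\lambda_k[\omega_Y]+[i\partial\bar\partial\Psi_k]=\lambda_k c_1(Y)$. Pointwise convergence $\omega_k\to\omega_{TY}$ on $X$ is then automatic, since any $p\in X$ eventually lies in some $\mathcal{U}_k$, where $\omega_k(p)=\omega_{TY}(p)$.

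The main obstacle is verifying positivity of $\omega_k$ in the transition annulus. On the radial part $\Psi_k=g_k(\eta)$ one computes
\begin{equation*}
\omega_k=(\lambda_k+g_k'(\eta))\omega_Y+g_k''(\eta)\,i\partial\eta\wedge\bar\partial\eta,
\end{equation*}
so K\"ahlerity follows by designing $g_k'$ to increase monotonically from its boundary value $\tfrac{3}{2}\eta_k^{1/2}-\lambda_k+O(1)$ up to $0$, which forces $g_k''\geq 0$ throughout (compatible with the matched value $u''(\eta_k)=\tfrac{3}{4}\eta_k^{-1/2}+o(1)>0$ on the inner boundary); both coefficients then stay non-negative, with the $\omega_Y$-coefficient bounded below by $\tfrac{3}{2}\eta_k^{1/2}>0$. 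The $O(\eta^{1/2})$ angular variation of $u$ on $\{\eta=\eta_k\}$ produces bounded discrepancies between the actual boundary data and the radial approximation, which are absorbed by a mild thickening of the annulus and, if necessary, by enlarging $\lambda_k$ so that the error terms are dominated by $\lambda_k\omega_Y$. This completes the construction.
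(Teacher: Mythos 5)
Your construction is correct and follows essentially the same strategy as the paper's proof: both write $\omega_{TY}$ and the reference form in class $c_1(Y)$ in terms of global $i\partial\bar\partial$-potentials on $X$ (using exactness of $\omega_Y$ on $X$ from the long exact sequence of the pair), glue the Tian--Yau potential to a large multiple of the reference form via a cutoff supported in a compact annular region near $D$, and secure positivity there by taking the scaling parameter ($\lambda_k$, the paper's $t_k$) sufficiently large. Your explicit radial profile $g_k(\eta)$ with $g_k''\geq 0$ is a more quantitative version of the paper's device of composing the plurisubharmonic exhaustion with a convex function $\chi_k$, but the mechanism is identical.
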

\begin{proof}
	Take any K\"ahler form $\omega_Y$ on $Y$ in the cohomology class $c_1(Y)$, then from the long exact sequence of relative pairs
	\begin{align} \label{Looijenga}
		H_2(X)\rightarrow H_2(Y)\rightarrow H_2(Y,X)\cong H^2(D),
	\end{align}
	we have $\omega_Y|_{H_2(X,\mathbb{Z})}=0$. Since the de Rham cohomology and singular cohomology of $X$ coincide, $\omega_Y$ is exact. From vanishing of Dolbeault cohomology of Stein manifolds, $\omega_Y=i\partial\bar{\partial}\phi$ for some real-valued function $\phi$ globally defined on $X$.
	In particular, $\phi$ is strictly plurisubharmonic. Since $\mathcal{O}_Y(D)$ is trivial on $X$, one can interpret $e^{-\phi}$ as a hermitian metric of the bundle. There is a unique nowhere vanishing section of $\mathcal{O}_Y(D)$ over $X$, up to a $\mathbb{C}^*$-scaling. By choosing the trivialization $\mathcal{O}_Y(D)|_X\cong X\times \mathbb{C}$ sending that section to $1$, one has the asymptotic $\phi\sim O(\log{|w|})$, where $D=\{w=0\}$ locally. In particular, on has $\lim_{x\rightarrow D}\phi=\infty$. 
	This implies that each level set $\phi^{-1}(c)$ is compact and thus $\phi$ is an exhaustive function. Now since $\phi$ is an exhaustive function, one can take $\mathcal{U}_k=\phi^{-1}((-\infty,k))$ relatively compact and $\cup_k \mathcal{U}_k=X$.
	
	Let $\chi_k:\mathbb{R}\rightarrow \mathbb{R}$ be a convex function such that $\chi_k(x)=0$ if $x\leq k-1$ and $\chi_k(x)=x$ if $x\geq k$. Then $\chi_k\circ \phi$ is again plurisubharmonic. Therefore, $\beta_k=i\partial\bar{\partial}(\chi_k\circ \phi)$ is semi-positive and coincides with $\omega_Y$ on $X\backslash \mathcal{U}_k$. In particular, $\beta_k$ can be extended over $D$ and defined on $Y$. Since $\beta_k$ is exact on $X$, one has $[\beta_k]$ is a multiple of $c_1(Y)$ from (\ref{Looijenga}). 
	
	Choose a cut-off function $\eta_k$ such that $\eta_k(x)=0$ for $x\in X\backslash U_{k+1}$ and $\eta_k(x)=1$ for $x\in \mathcal{U}_k$. Then set 
	\begin{align*}
		\omega_k= i\partial \bar{\partial}(\eta_k\phi_{TY})+t_k\beta_k,
	\end{align*} where $\phi_{TY}$ is the potential function of Tian-Yau metric. Then $\omega_k=\omega_{TY}$ on $U_{k-1}$ and $\omega_k=\omega_Y$ on $X\backslash U_{k+1}$. One can choose large enough $t_k$ to secure the positivity on $U_{k+1}\backslash U_{k-1}$ and the lemma is proved. From the above proof, We will take $\mathcal{U}_k$ be of the form $\mathcal{U}_k=\pi^{-1}(U_k)$ for some open subset $U_k\subset B$ such that ${U}_k\nearrow B$ and $\mathcal{U}_k\supseteq \{|w|>\epsilon_k\}$.
\end{proof}
The next lemma is helpful when study the behaviors of the Maslov index two discs. 
\begin{lem} \label{bound}
	Fix a K\"ahler form $\omega_Y$ on $Y$. The sequence of K\"ahler form $\omega_i$ can be taken such that there exists $C_1,C_2>$ independent of $i$ with
	\begin{align*}
		g_i \leq C_1\epsilon_i^{-20}g_Y \hspace{2mm}&\mbox{on $Y$, and} \\
		C_2 \epsilon_i^{-20}g_Y\leq g_i\hspace{2mm}&\mbox{on $\mathcal{U}_k$, $k\gg1$.}
	\end{align*} 
	Here $g_Y, g_i$ are the Riemannian metric associate to $\omega_Y,\omega_i$. Moreover, $[\omega_i]=k_ic_1(Y)$ with $k_i=O(\epsilon_i^{-20})$. 
\end{lem}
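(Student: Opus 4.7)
The plan is to read off both inequalities directly from the explicit construction
\[
\omega_i = i\partial\bar\partial(\eta_i\phi_{TY}) + t_i\beta_i,\qquad \beta_i = i\partial\bar\partial(\chi_i\circ\phi),
\]
given in Lemma~\ref{6}, after fixing the width of the cutoffs $\eta_i,\chi_i$ and the scaling $t_i$ optimally. The exponent $-20$ is deliberately loose: it only has to dominate every error term produced by differentiating the cutoffs, so any sufficiently large fixed power would do; I will take $t_i:=\epsilon_i^{-20}$, which immediately gives $k_i=t_i=O(\epsilon_i^{-20})$ via $[\omega_i]=t_ic_1(Y)$.

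First I would separate $Y$ into the three regions implicit in the construction: the \emph{interior} $\mathcal{U}_{i-1}$ on which $\omega_i=\omega_{TY}$; the \emph{exterior} $Y\setminus \mathcal{U}_{i+1}$, a neighborhood of $D$ sitting inside $\{|w|<\epsilon_{i+1}\}$, on which $\omega_i=t_i\omega_Y$; and the transition annulus $\mathcal{U}_{i+1}\setminus \mathcal{U}_{i-1}$. On the interior, the asymptotic (\ref{asmptotics2}) gives $g_{TY}\leq C|w|^{-2}(-\log|w|)^{-1/2}\,g_Y$, which on $\{|w|\geq \epsilon_{i-1}\}$ is at most $C\epsilon_i^{-2}g_Y$, comfortably inside any bound of the form $C_1\epsilon_i^{-20}g_Y$. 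On the exterior $g_i=t_ig_Y$ identically, so both inequalities reduce there to the window $C_2\epsilon_i^{-20}\leq t_i\leq C_1\epsilon_i^{-20}$ for the scaling, producing in particular the stated lower bound on the region (singled out by the phrase ``$\mathcal{U}_k$, $k\gg 1$'' after the indexing conventions of Lemma~\ref{6}) where $\omega_i$ already equals $t_i\omega_Y$.

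Next I would handle the transition annulus. Standard bump function constructions give $\eta_i,\chi_i$ whose first and second derivatives, measured against $g_Y$, are of order $O(\epsilon_i^{-2})$, since the radial width of the annulus in $|w|$ is comparable to $\epsilon_i$. Expanding
\[
i\partial\bar\partial(\eta_i\phi_{TY}) = \eta_i\,\omega_{TY} + \phi_{TY}\,i\partial\bar\partial\eta_i + i\partial\eta_i\wedge\bar\partial\phi_{TY} + i\partial\phi_{TY}\wedge\bar\partial\eta_i,
\]
the three cross terms are each bounded in $g_Y$-norm by a fixed moderate power $\epsilon_i^{-k}$, using $\phi_{TY}\sim(-\log|w|^2 e^{-\phi})^{3/2}$ and the asymptotic control of $\partial\phi_{TY}$ implicit in (\ref{asmptotics2}). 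On the annulus $\beta_i$ is comparable to $\omega_Y$ (only log-suppressed corrections), so the term $t_i\beta_i$ with $t_i=\epsilon_i^{-20}$ strictly dominates these errors; this simultaneously secures positivity of $\omega_i$ and the two-sided comparison $\epsilon_i^{-20}g_Y\lesssim g_i\lesssim \epsilon_i^{-20}g_Y$ on the annulus.

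The main obstacle is the error bookkeeping in this transition annulus: one has to verify that every cross term arising from cutting off $\phi_{TY}$ has $g_Y$-norm of order $o(t_i)$. The $3/2$-growth of $\phi_{TY}$ together with the $L^{\pm 1/2}$ factors in $\omega_{TY}$ give only log-suppressed polynomial bounds in $|w|^{-1}$, and combined with the $O(\epsilon_i^{-2})$ derivatives of the cutoffs produce a worst-case bound sitting well below $\epsilon_i^{-20}$. The deliberately generous choice of the exponent $20$ is what keeps the estimate free of delicate optimization.
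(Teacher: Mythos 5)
Your proposal is correct and follows essentially the same route as the paper's own proof: both read the two-sided bound off the explicit construction in Lemma \ref{6} with the choice $t_i=\epsilon_i^{-20}$, using the Tian--Yau asymptotics (as in (\ref{bound'})) to compare $g_{TY}$ with $g_Y$ near $D$ and compactness of the interior region, and both necessarily read the lower bound as holding on the region near $D$ where $\omega_i=t_i\beta_i$. The paper's proof is terser and does not spell out the cutoff cross-terms on the transition annulus, so your bookkeeping there is a correct elaboration of the same argument rather than a different method.
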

\begin{proof}
	Since $g_Y$ is defined on $Y$, there exists a constant $C_3,C_4$ such that on $U\cap X$ we have
	\begin{align} \label{bound'}
		C_3 (-\log{|w|})^{\frac{1}{2}}	g_{Y}   \leq  g_{TY}\leq C_4 (-\log{|w|})^{\frac{1}{2}}|w|^{-2} g_{Y}
	\end{align} on $\mathcal{U}_{k_0}^c$, for some fixed $k_0\gg 1$, from the asymptotic of the Tian-Yau metric. 
	Since $\mathcal{U}_k$ is relative compact, there exists a constant $C_5>0$ such that $C_5^{-1}g_{Y}\leq g_{TY}\leq  C_5 g_{Y}$ on $\mathcal{U}_{k_0}$. It suffices to take $t_k=|\epsilon_k|^{-20}$ in Lemma \ref{6}. The last statement follows from the construction in Lemma \ref{6}. 
\end{proof} 


%

\section{Tropical Geometry} \label{tropical geometry}
The modified SYZ conjecture \cite{KS4}\cite{GW} expects
the collapsing of the Calabi-Yau manifolds near large complex structure limit to affine manifolds with singularities. In particular, the holomorphic curves collapse to certain $1$-skeletons known as the tropical curves. It is proved in the case of toric manifolds, the tropical curves countings recover  the log Gromov-Witten invariants \cite{M2}\cite{NS}. 

Through out the paper, we will assume that $B$ is the base of the special Lagrangian fibration, the discriminant locus $\Delta\subseteq B$ are isolated points and $B_0=B\backslash \Delta$ equipped with the complex affine coordinates. Then there is a natural short exact sequence of lattices over $B_0$.
\begin{align*}
	0\rightarrow H_2(X)\rightarrow \Lambda_c=\bigcup_{u\in B_0}H_2(X,L_u)\rightarrow \Lambda_g=\bigcup_{u\in B_0}H_1(L_u)\rightarrow 0
\end{align*} The intersection pairing on $H_1(L_u)$ lifts naturally to $H_2(X,L_u)$ and $\Lambda_c$ is called the charge lattice.
We now recall the notion of scattering diagrams on $B$. We will assume that around each singularity of the affine structure the monodromy is conjugate to $\bigl(
\begin{smallmatrix}
	1 & 1\\
	0 & 1
\end{smallmatrix} \bigr)$ or $\bigl(\begin{smallmatrix}
	0 & 1\\
	-1 & 1
\end{smallmatrix} \bigr)$ which is the generic case due to Lemma \ref{generic}. For our purpose to compare to Floer theory, we will first introduce Novikov field and the modification of the language of scattering diagram for our setting. 

Recall that the Novikov field is
\begin{align*}
	\Lambda:=\{\sum_{i\in \mathbb{N}}c_iT^{\lambda_i}|\lambda_i\in \mathbb{R},\lim_{i\rightarrow \infty}\lambda_i=\infty, c_i\in \mathbb{C}\}.
\end{align*}  We denote its maximal ideal by $\Lambda_+$, the ring of units by $U_{\Lambda}=\mbox{val}^{-1}(0)$, and $\Lambda^*=\Lambda\backslash \{0\}$. There is a natural discrete valuation on $\Lambda^*$
\begin{align*}
	\mbox{val}: \Lambda^* \longrightarrow &\mathbb{R} \\
	\sum_{i\in \mathbb{N}}c_iT^{\lambda_i}\mapsto & \lambda_{i_0},
\end{align*}
where $i_0$ is the smallest $i$ with $\lambda_i\neq 0$. 
We formally extend the domain of $\mbox{val}$ to $\Lambda$ by setting $\mbox{val}(0)=\infty$.

\begin{definition} 
	A scattering diagram $\mathfrak{D}$ on $B$ is a set of $3$-tuples $\{(l_{\mathfrak{d}},\gamma_{\mathfrak{d}}, f_{\mathfrak{d}}) \}_{\mathfrak{d}\in I}$, where 
	\begin{enumerate}
		\item $l_{\mathfrak{d}}$ is an affine ray emanating from a point $o_{\mathfrak{d}}\in B$.
		\item $\gamma_{\mathfrak{d}}\in \Gamma(l_{\mathfrak{d}},\Lambda)$ flat section with $\gamma_{\mathfrak{d}}$ is positively proportional to the tangent of $l_{\mathfrak{d}}$. 
		\item For $u\in l_{\mathfrak{d}}$, the slab function $f_{\mathfrak{d}}(u)$ is a power series in $T^{\omega_{TY}(\gamma_{\mathfrak{d}})}z^{\gamma_{\mathfrak{d}}}$ with $1$ as its constant term.
	\end{enumerate} such that given any $\lambda>0$, there are only finitely many $\mathfrak{d}$ with $f_{\mathfrak{d}}\neq 0 (\mbox{ mod }\Lambda_+)$. We define the support of the scattering diagram 
	$     \mbox{Supp}(\mathfrak{D})=\bigcup_{\mathfrak{d}\in I} l_{\mathfrak{d}}$, and the singularity of the scattering diagram $\mbox{Sing}(\mathfrak{D})$ to be the union of the staring points of $l_{\mathfrak{d}}$. 
	
\end{definition}
It worth mentioned that the second assumption of a ray $l_{\mathfrak{d}}$ in the scattering diagram implies that $l_{\mathfrak{d}}$ is oriented such that $\omega_{TY}(\gamma_{\mathfrak{d}})$ is strictly increasing along $l_{\mathfrak{d}}$. It is proved by the author that there are five families of holomorphic discs with non-trivial local open Gromov-Witten invariants near a type $II$ singular fibre \cite{L12}. This motivates the following definition of the initial scattering diagram canonically associated to $B$ below. 
\begin{definition} \label{initiall}
	The initial scattering diagram $\mathfrak{D}_{in}=\{(l_{\mathfrak{d}},\gamma_{\mathfrak{d}},f_{\mathfrak{d}})\}$ consisting of 
	\begin{enumerate}
		\item $(l_{\pm},\gamma_{\pm},f_{\pm})$ for each singularity with monodromy conjugate to $\bigl(\begin{smallmatrix}
			1 & 1\\
			0 & 1
		\end{smallmatrix} \bigr)$ such that 
		\begin{itemize}
			\item $l_{\pm}$ are affine rays emanating from the singularity with tangents in the monodromy invariant direction.
			\item  $\gamma_{\pm}$ are the Lefschetz thimbles such that $\int_{\gamma_{\pm}}\omega_{TY}>0$ along $l_{\pm}$. 
			\item The slab functions are $f_{\pm}=1+T^{\omega_{TY}(\gamma_{\pm})}z^{ \gamma_{\pm}}$. 
		\end{itemize}
		\item $(l_i,\gamma_i,f_i)$, $i=1,\cdots, 5$ for each singularity with monodromy conjugate to $\bigl(\begin{smallmatrix}
			0 & 1\\
			-1 & 1
		\end{smallmatrix} \bigr)$ such that 
		\begin{itemize}
			\item After choosing a branch cut and  $\gamma'_1,\gamma'_2\in H_2(X,L_u)$ such that the counterclockwise monodromy is 
			\begin{align*}
				q\gamma'_1&\mapsto -\gamma'_2\\
				\gamma'_2&\mapsto \gamma'_1+\gamma'_2.
			\end{align*}
			\item $\gamma_1=-\gamma_1',\gamma_2=\gamma'_2, \gamma_3=\gamma'_1+\gamma'_2,\gamma_4=\gamma'_1,\gamma_5=-\gamma'_2$.
			\item $l_{i}$ is the affine line defined by $\gamma_i$. 
			\item The slab functions are  $f_i(u)=1+T^{\omega_{TY}(\gamma_i)}z^{\gamma_i}$.
		\end{itemize}
		
	\end{enumerate}
\end{definition}

Given a scattering diagram $\mathfrak{D}$ on $B$, $u\in B_0$ and $\lambda>0$. For a generic counterclockwise
$ \phi:S^1\rightarrow B$
in a small enough neighborhood of $u$ such that it intersects every ray $l_{\mathfrak{d}}$ starting from $u$ (or passing through $u$)
transversally and exactly once (or twice respectively) if $u\in l_{\mathfrak{d}}$
and $f_{\mathfrak{d}}(u)\not \equiv 0 (\mbox{ mod }T^{\lambda})$. Assume the
intersection order is $\mathfrak{d}_1,\cdots \mathfrak{d}_s$, then
we form an ordered product as follows :
\begin{align} \label{1011}
	\theta_{\phi,\mathfrak{D}}^{u,\lambda}=\mathcal{K}_{\mathfrak{d}_1}(u)^{\epsilon_1}\circ
	\dots \circ \mathcal{K}_{\mathfrak{d}_s}(u)^{\epsilon_s} \hspace{3mm} (\mbox{mod } T^{\lambda}),
\end{align} where each $\mathfrak{d}=\mathfrak{d}_i$ on right hand side of (\ref{1011}) are transformations of the form
\begin{align} \label{2016'}
	\mathcal{K}_{\mathfrak{d}}(u):\Lambda[[H_2(X,L_u;\mathbb{Z})]]&\rightarrow\Lambda[[H_2(X,L_u;\mathbb{Z})]] \notag\\
	z^{\gamma}&\mapsto z^{ \gamma}f_{\mathfrak{d}}(u)^{\langle \gamma,\gamma_{\mathfrak{d}} \rangle}.
\end{align} and $ \epsilon_i=\mbox{sgn} \langle\phi'(p_i), \gamma_{\mathfrak{d}}(p_i)\rangle$, for $p_i\in \mbox{Im}\phi\cap
\mathfrak{d}_i$ and with the identification $T_uB_0\cong H_1(L_u,\mathbb{R})$. Recall that the definition of the complex affine structure allows one to have the natural identification $\phi'(p_i)\in T_{p_i}B_0\cong H_1(L_{p_i},\mathbb{R})$. 
One then defines 
\begin{align*}
	\theta_{\mathfrak{D},u}=\lim_{\leftarrow} \theta^{u,\lambda}_{\phi,\mathfrak{D}},
\end{align*} where $\phi$ falls in a nested sequence of neighborhood of $u$ converging to $u$ and $\lambda\rightarrow \infty$. 

The following is the key observation of Kontsevich-Soibelman \cite{KS1} that one can reconstruct mirror via $\mathfrak{D}_{in}$ which is expected to capture the information of holomorphic discs with special Lagrangian boundary conditions. 
\begin{thm} \cite[Theorem 6]{KS1}\label{complete}
	Given the initial scattering diagram $\mathfrak{D}_{in}$, there exists a unique  complete scattering diagram $\mathfrak{D}$ such that $\theta_{\mathfrak{D},u}=id$ for every $u\in B_0$ and $l_{\mathfrak{d}}\neq l_{\mathfrak{d}'}$ if $\mathfrak{d}\neq \mathfrak{d}'$. 
\end{thm}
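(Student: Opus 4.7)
The plan is a standard order-by-order construction in the Novikov parameter $T$, working inside the pronilpotent tropical vertex group generated by the elementary transformations $\mathcal{K}_{\mathfrak{d}}$ of (\ref{2016'}). For $\lambda > 0$, let $\mathfrak{D}^{[\lambda]}$ denote the sub-collection of rays whose slab functions are non-trivial modulo $T^{\lambda}$; by the finiteness axiom $\mathfrak{D}^{[\lambda]}$ is a finite set. I would build a nested sequence $\mathfrak{D}^{[\lambda_0]} \subset \mathfrak{D}^{[\lambda_1]} \subset \cdots$ with $\lambda_k \to \infty$, each extending $\mathfrak{D}_{in}$ and consistent modulo $T^{\lambda_k}$, and then define $\mathfrak{D}$ as the inverse limit.

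For the base case, set $\lambda_0$ equal to the smallest value $\omega_{TY}(\gamma_{\mathfrak{d}})$ achieved by $\mathfrak{d} \in \mathfrak{D}_{in}$. Consistency at this order needs to be checked only at singularities of the affine structure: the five rays around a type $II$ singularity satisfy the pentagon identity of Gross--Pandharipande--Siebert, while the two rays around a focus--focus singularity satisfy a trivial commutator relation. Both are algebraic identities in the tropical vertex group and are precisely the consistency input built into Definition \ref{intial}.

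For the inductive step, assume $\mathfrak{D}^{[\lambda_k]}$ is consistent modulo $T^{\lambda_k}$ and let $\lambda_{k+1}$ be the next critical value. At every intersection point $p$ of rays of $\mathfrak{D}^{[\lambda_k]}$ (and at every singularity), the failure of consistency is measured by $\log \theta_{\mathfrak{D}^{[\lambda_k]},p}$, which lives in the $(\lambda_k,\lambda_{k+1}]$--graded piece of the pronilpotent tropical vertex Lie algebra and admits a unique expansion as a finite sum of homogeneous monomials $c_{\gamma} T^{\omega_{TY}(\gamma)} z^{\gamma}$ with $\gamma \in H_2(X,L_p;\mathbb{Z})$. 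For each non-zero summand I would add a new ray emanating from $p$ in the direction dictated by $\gamma$ with slab function $1 + c_{\gamma} T^{\omega_{TY}(\gamma)} z^{\gamma}$, with coefficient pinned down so that the ordered product around every small loop becomes trivial modulo $T^{\lambda_{k+1}}$; rays that happen to share an affine line are absorbed into a single slab by multiplying slab functions, which enforces the non-redundancy condition $l_{\mathfrak{d}} \neq l_{\mathfrak{d}'}$. Only finitely many new rays appear because only finitely many rays of $\mathfrak{D}^{[\lambda_k]}$ can meet in the locus where the relevant slab functions are non-trivial modulo $T^{\lambda_{k+1}}$.

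The main obstacle is the existence and uniqueness of the graded decomposition used at each inductive step: one must verify that an arbitrary degree-$\lambda_{k+1}$ element of the tropical vertex Lie algebra can be written uniquely as an ordered product of elementary $\mathcal{K}_{\mathfrak{d}}$'s along rays of prescribed directions compatible with a counterclockwise loop around $p$. This is the core Lie-algebraic computation, handled by a Baker--Campbell--Hausdorff argument in the pronilpotent completion together with the observation that distinct primitive directions $\gamma$ produce linearly independent derivations. Once this is in place, uniqueness of the whole diagram follows because at every order the freshly added slab functions are rigidly determined by the graded obstruction and any deviation would violate either consistency or non-redundancy; newly added rays contribute to the obstruction at later orders only through higher-order interactions, so they never retroactively spoil lower-order consistency. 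Passing to $\lambda_k \to \infty$ and invoking the finiteness axiom once more, the inverse limit yields the unique complete scattering diagram $\mathfrak{D}$.
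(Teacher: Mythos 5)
The paper does not prove this statement at all: it is imported verbatim from Kontsevich--Soibelman \cite{KS1}, so there is no in-paper argument to compare against. Your sketch is the standard perturbative construction behind that citation (and behind the analogous results of Gross--Siebert and Gross--Pandharipande--Siebert): induct on the order in $T$, measure the failure of consistency at each joint by the graded piece of $\log\theta$, and cancel it by inserting uniquely determined new rays, with uniqueness coming from the linear independence of the homogeneous derivations $z^{\gamma}\partial$ attached to distinct directions. In outline this is the right proof and the right level of detail for the algebraic core.

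Two points deserve more care in this particular global setting, and they are exactly where the geometry of $B$ (rather than pure Lie theory) enters. First, the base-case consistency at a singular point is not a ``trivial commutator relation'': the loop $\phi$ around a focus--focus point picks up the monodromy $\bigl(\begin{smallmatrix}1&1\\0&1\end{smallmatrix}\bigr)$ acting on $H_2(X,L_u)$, and the identity $\theta_{\mathfrak{D},u}=\mathrm{id}$ holds only because the composition of the two thimble wall-crossings compensates that monodromy (the symplectic transvection by $\partial\gamma_{\pm}$); similarly the five lines at a type $II$ point must be checked against the order-six monodromy, which is the content of the local computation in \cite{L12} rather than a formal pentagon identity. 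Second, your finiteness claim at each inductive step is close to circular as stated: in the local $\mathbb{C}[[x,y]]$ setting finiteness is automatic from the monoid grading, but here one must show that every non-initial scattered ray carries a class whose area $\omega_{TY}(\gamma_{\mathfrak{d}})$ at its starting point is bounded below by a uniform $\epsilon_0>0$ (initial rays have area tending to zero at their singularity, so two of them could a priori scatter at arbitrarily small area), and that rays escaping toward infinity do not return to create new joints. These are supplied in the paper by the affine-distance and near-infinity estimates (Lemmas \ref{affine distance}, \ref{MI2 asym}, \ref{94}) proved for broken lines; without some such input the induction does not visibly terminate modulo $T^{\lambda}$. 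Neither issue invalidates your strategy, but both need to be addressed before the sketch becomes a proof in this geometry.
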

The complete scattering diagram $\mathfrak{D}$ is closely related to tropical discs on  $B$.
\begin{definition} \label{15}
	A tropical curve (with stop) on $B$ is a $3$-tuple $(\phi,T,w)$ where $T$ is a rooted connected tree (with a root $x$).  We denote the set of vertices and edges by $C_0(T)$ and $C_1(T)$ respectively. Then the weight function $w:C_1(T)\rightarrow \mathbb{N}$ and the continuous map $\phi:T\rightarrow B$ satisfy the following:
	\begin{enumerate}
		\item For any vertex $v\in C_0(T)$, the unique edge $e_v$ closest to the stop is called the outgoing edge of $v$ and $w_v:=w(e_v)$. 
		\item For each $e\in C_1(T)$, $\phi|_e$ is either an embedding of affine segment on $B_0$ or $\phi |_e$ is a constant map. In the later case, $e$ is associated with an integral primitive tangent vector at $\phi(e)$ (up to sign) if $\phi(e)\notin \Delta$. The edge adjacent to $x$ is not contracted by $\phi$. See (3), (4) for the case when $\phi(e)\in \Delta$. 
		\item For each $v\in C_0(T)$, $v\neq x$ and $\mbox{val}(v)=1$ if and only if $\phi(v)\in \Delta$. Moreover, 
		\begin{enumerate}
			\item If $\phi|_{e_v}$ is an embedding, then $\phi(e_v)$ falls in one of $l_{\gamma_i}$ in Definition \ref{initiall}.
			\item If $\phi(e_v)\in \Delta$ is contracted, then the integral primitive tangent vector $v_e$ associate to $e$ is in the monodromy invariant direction in $T_yB$, $y=Exp_{\phi(e_v)}(\epsilon v_e)$ for some small $\epsilon>0$. 
		\end{enumerate}
		
		\item For each $v\in C_0(T)$, $v\neq x$ and $\mbox{val}(v)=2$, we have $\phi(v)\in \Delta$. Moreover, 
		\begin{enumerate}
			\item the edges $e_v^+,e_v^-$ adjacent to $v$ are not contracted by $\phi$.
			\item $\phi(e_v^{\pm})$ is in the monodromy invariant direction.
			\item $w(e_v^+)=w(e_v^-)$.
		\end{enumerate}
		\item For each $v\in C_0(T)$, $\mbox{val}(v)\geq 2$, we have the following assumption: 
		
		(balancing condition) Each outgoing tangent at $u$ along the image of each edge adjacent to $v$ is rational with respect to the above integral structure on $T_{\phi(v)}B$. Denote the outgoing primitive tangent vectors by $v_i$ and the corresponding weight by $w_i$, then 
		\begin{align} \label{19}
			\sum_i w_i v_i=0.
		\end{align}
		\item For each unbounded edge $e\in C_1(T)$, the tangent of $\phi(e)$ is in the monodromy invariant direction. 
		\item The Maslov index of $(h,w,T)$ is defined to be 
		\begin{align*}
			\mu(h)=2 (\#\mbox{unbounded edges}).
		\end{align*}
		
	\end{enumerate}
\end{definition}
We will identify the two tropical discs $(\phi,T,w)$ and $(\phi',T',w')$ if there exists a homeomorphism $f:T\rightarrow T'$ such that $\phi'\circ f=\phi$ and $w'\circ f=w$. It worth mentioned that here we allow tropical discs/curves with contracted edges due to the presence of the singularity which is different from the usual definition. 

Before we define the tropical disc counting invariants, we need to associate each tropical disc $(\phi,T, w)$ with stop at $u\in B_0$ a relative class in $H_2(X,L_u)/H_2(X)$. Let $h(e)$ be the edge adjacent to $u$ and $u'$ is the image of another vertex. Extend the affine segment in the side of $u$ and one obtains an affine ray emanating from $u'$ which is a ray $l_{\mathfrak{d}}$ in the scattering diagram $\mathfrak{D}$. We then define
\begin{align*}
	[h]:=\gamma_{\mathfrak{d}}(u)\in H_2(X,L_u)
\end{align*} to be the relative class associate to the tropical disc $(h,T,w)$. 
Then for the purpose of comparing to open Gromov-Witten invariants, one naturally has the following definition:
\begin{definition}
	Given $u\in B_0$ and $\gamma\in H_2(X,L_u)$ primitive, define \\ $\tilde{\Omega}^{trop}(d\gamma;u)\in \mathbb{Q}$ and $l(\gamma;u)\in \mathbb{N}$ via the equation
	\begin{align*}
		\sum_{\mathfrak{d}:\gamma_{\mathfrak{d}}(u)=\gamma, l_{\mathfrak{d}}\ni u}\log{f_{\mathfrak{d}}(u)}=\sum_{d\geq 1} d\tilde{\Omega}^{trop}(d\gamma;u)(T^{\omega_{TY}(\gamma)}z^{\gamma})^d,
	\end{align*} and 
	\begin{align*}
		l(\gamma;u)=\max_{(h,T,w): [h]=\gamma}\#\{e\in C_1(T)|h(e)\mbox{ not contracted}\}.
	\end{align*}
	
\end{definition}
\begin{rmk} Although here we define the invariants
	$\tilde{\Omega}^{trop}(\gamma;u)$ through the scattering diagram, they coincide with the weighted counts of admissible tropical discs representing $\gamma\in H_2(X,L_u)$ (see Section 4 \cite{L8}). See also the refined version \cite{L6}\cite{M7} and its interpretation \cite{KS5} \cite{CM2}.
\end{rmk}      

\subsection{Broken Lines and Tropical Superpotential}
For the comparison of the counting of Maslov index two holomorphic discs later, we will review the definition of broken lines that introduced in \cite{GHK}. 
\begin{definition} Let $B_0$ be the base of the special Lagrangian fibration in Theorem \ref{92} equipped with the complex affine structure.
	A broken line is a continuous map 
	\begin{align*}
		\mathfrak{b}:(-\infty,0]\rightarrow B_0
	\end{align*} with stop $\mathfrak{b}(0)=u$ with the below properties:
	there exist
	\begin{align*}
		-\infty=t_0<t_1<\cdots <t_n=0
	\end{align*} such that $\mathfrak{b}|_{[t_{i-1},t_i]}$ is affine. For each $i=1,\cdots, n$, there is an associate flat section $c_iz^{m^{\mathfrak{b}}_i(t)}\in H_2(X,L_{\beta(t)})$ (after the identification of relative classes via parallel transport, we may omit the dependence of $t$ for simplicity when there is no confusion) such that 
	\begin{enumerate}
		\item For each $i$, $\partial (m^{\mathfrak{b}}_i)$ is positively proportional to $\beta'(t)$ under the identification $T_uB\cong H_1(L_u,\mathbb{R})$.
		\item $m^{\mathfrak{b}}_1=\beta_0$ and $c_1=1$. Here $\beta_0\in H_2(X,L_{\beta(0)})$ is defined as follows: recall that $\mathcal{U}_k, k\gg 1$ (in Lemma \ref{6}) is a neighborhood of infinity. One has $H_2(\mathcal{U}_k,L_{\beta(t)})\cong \mathbb{Z}, t\ll 0$ and there is a unique generator has positive pairing with the K\"ahler form $\omega$. Define $\beta_0$ (up to parallel transport) be the image $H_2(\mathcal{U}_k,L_{\beta(t)})\rightarrow H_2(X,L_{\beta(t)}), t\ll 0$. 
		\item $\mathfrak{b}(t_i)\in \mbox{Supp}(\mathfrak{D})\backslash \mbox{Sing}(\mathfrak{D})$. 
		\item If $\mathfrak{b}(t_i)\in \cap_i l_{\mathfrak{d}_i}$ (of $1$-dimensional intersection), then $c_{i+1}z^{m^{\mathfrak{b}}_{i+1}(t_{i+1})}$ is a term in 
		\begin{align}\label{broken line}
			\big(\prod_i\mathcal{K}_{\mathfrak{d}_i}(\mathfrak{b}(t_i)^{\epsilon_i}\big)(c_iz^{m^{\mathfrak{b}}_i(t_i)}),
		\end{align} where $\epsilon_i=\mbox{sgn}\langle m_i^{\mathfrak{b}}(t_i), \gamma_{\mathfrak{d}_i}\rangle$. 	
	\end{enumerate}
\end{definition}  
\begin{definition}
	Given a broken line $\mathfrak{b}$, we say that $n$ is the length of the broken line $\mathfrak{b}$ and it has the homology $[\mathfrak{b}]:=m^{\mathfrak{b}}_n(u)\in H_2(X,L_u)$ and weight $\mbox{Mono}(\mathfrak{b}):=c_n$. 
\end{definition}
The following is the simplest example of the broken line. 
\begin{ex}\label{initial broken line}
	Given $u$ close enough to infinity and there exists a unique affine line from $u$ to infinity with tangent in the monodromy invariant direction. This gives rise to a broken line $\mathfrak{b}:(-\infty,0]\rightarrow B_0$ such that its image is the above affine line and $n=1$ with the associate flat section $z^{\beta_0}$. The weight of this broken line is $1$. 
\end{ex}

\begin{rmk}
	We say $\mathfrak{b}$ is a degenerate broken line if it is a limit of broken lines $\mathfrak{b}_n$. Notice that the compatibility of the scattering diagram implies that $\mbox{Mono}(\mathfrak{b})$ is well-defined. 
\end{rmk}
We will denote $l_i$ to be the affine segment $\mathfrak{b}([t_{i-1},t_{i}])$. Recall that for each $u\in l_{\mathfrak{d}}$, there exists a tropical disc of Maslov index zero ending at $u$ with the edge adjacent to $u$ contained in $l_{\mathfrak{d}}$. Such tropical disc is not unique but represents a fixed relative class. We will denote $\phi_i$ the tropical disc of Maslov index zero associate to the bending point $\mathfrak{b}(t_i)$. 

\begin{rmk}
	The data for a broken line is equivalent to a Maslov index two tropical disc and the corresponding weighted counts coincide \cite{G9}. This indicates why later we will compare the weighted count of the broken lines and the counting of Maslov index two holomorphic discs. 
\end{rmk}
For the purpose of induction process later, we have the following definition: 
\begin{definition}
	We denote $l(\beta;u)$ be the largest possible length of broken lines representing $\beta\in H_2(X,L_u)$.
\end{definition}
A priori, it is not clear if $l(\beta;u)$ is finite. To prove that it is finite, we will first need to understand the behavior of affine rays near infinity. 

By \cite[Theorem 1.5]{CJL}, the special Lagrangian fibration near $D$ after hyperK\"ahler rotation via (\ref{HK rel}) is a germ of elliptic fibration over $U'_{\infty}$ and can be partially compactified by adding an $I_d$ fibre at infinity. After a suitable change of coordinate, it can be realized as $U'_{\infty}\times \mathbb{C}/\mathbb{Z}\langle 1, \frac{d}{2\pi i}\log{u}\rangle$ where $u$ is the holomorphic coordinate of the base with $u=0$ corresponds to the infinity, $x$ is that of the fibre after a suitable coordinate change and $d$ is the degree of the del Pezzo surface \cite{CJL2}. Moreover, the restriction of the holomorphic volume form with suitable normalization from (\ref{HK rel}) is given by $k(u)\frac{du}{u}\wedge dx$, $k(0)=1$ for some holomorphic function $k(u)$, since any holomorphic function is constant on the elliptic fibres. In general, $k(u)$ is not necessarily constant $1$. Notice that $h(u)=\frac{k(u)-1}{u}$ extends over infinity as a holomorphic function. Let $H(u)$ be the anti-derivative of $h(u)$ with vanishing constant term, then $\tilde{u}=e^{H(u)}u$. Then $d\tilde{u}=e^{H(u)}k(u)$ and $\tilde{u}$ defines a holomorphic coordinate near $u=0$ and $\Omega=\frac{d\tilde{u}}{\tilde{u}}\wedge dx$. The periods of the elliptic fibration is then $1, \frac{1}{2\pi i}\log{\tilde{u}}+f(\tilde{u})$, for some holomorphic function $f$ with $f(0)=0$. Since later we will not use the coordinate $u$ anymore, we will abuse the notation and still use $u$ to denote the new coordinates $\tilde{u}$ and set $u=e^{r+i\theta}$.  
With the above understanding, we will first study the behaviors of the affine rays near infinity. 

\begin{lem} \label{BPS ray behavior} There exists an open neighborhoods $U_{\infty}\subseteq V_{\infty}\subseteq  B$ of infinity such that the follow is true:
	Let $l$ be an affine ray in $U'_{\infty}$ labeled by $\gamma$ with respect to the above affine structure and starting from $u_0\in V_{\infty}$ such that $|u|$ is decreasing at $u_0$. Then $|u|$ is strictly decreasing and unbounded along the affine ray $l\cap U_{\infty}$. In particular, the affine ray $l$ converges to the infinity. 
\end{lem}
\begin{proof}
	Assume that $\partial \gamma$ is represented by
	the loop from $0$ to $m+n\cdot \frac{d}{2\pi i}\log{u}$. Then by straight-forward calculation, the affine ray emanating from $u_0$ satisfies the equation 
	\begin{align} \label{affine equation}
		m\big[\log{u}-\log{u_0}\big]+n \big[\frac{d}{2\pi i}\big(\frac{(\log{u})^2}{2}-\frac{(\log{u_0})^2}{2}\big)\big]+n\epsilon(u)\in \mathbb{R},
	\end{align} where $|\epsilon(u)|<C|u|$ with $C$ independent of $(m,n)$. 
	When $n=0$, i.e., $\partial \gamma$ is monodromy invariant, then solution to (\ref{affine equation}) is given by $u=u_0\mathbb{R}_{>0}$ and $u$ is decreasing along the affine ray.  
	Now for the case $n\neq 0$, the affine ray passing through $u_0$ satisfies the equation 
	\begin{align} \label{parabola}
		& m(\theta-\theta_0)+\frac{nd}{4\pi}(r^2-\theta^2-r_0^2+\theta_0^2)+n\mbox{Im}\epsilon(u)=0, \mbox{ or} \notag \\
		& r^2-(\theta-\frac{2m\pi}{nd})^2=r_0^2-(\frac{2m\pi}{nd}-\theta_0)^2   +\frac{4\pi}{d}\mbox{Im}\epsilon(u).      
	\end{align} 
	Let $C_{u_0,l}$ be the 
	parabola on the $(r,\theta)$-plane defined by \eqref{parabola} except the last term, which spirals into the infinity from both ends. The affine ray $l$ is a small $C^1$ deformation of $C_{u_0,l}$. It suffices to choose $V_{\infty}=\{u: \log{|u|}<-R\}$ for $R\gg 0$ and $U_{\infty}=\{u: \log{|u|}<-2R\}$.
	
\end{proof}
%
\begin{rmk}\label{position of vertex}
	From the proof of the lemma, for every affine ray $l$ starting from $u_0\in V_{\infty}$, there is an associate parabola $C$ as described in (\ref{parabola}) which is asymptotic to the affine ray $l$. Moreover, the vertex of $C$ is outside of $U_{\infty}$. 
\end{rmk}

%
	
	Next, we have the following consequence of Lemma \ref{BPS ray behavior}:
	\begin{lem} \label{MI0 sink} There exists an open neighborhood $W_{\infty}\subseteq U_{\infty}$ of infinity such that 
		if there is a tropical disc $\phi$ of Maslov index zero with end at $u_0\in W_{\infty}$, then $|u|$ is strictly decreasing and unbounded along the affine ray extending the edge adjacent to $u_0$. 
	\end{lem}
	\begin{proof}
		Since the singularities of the affine structures are outside of $U_{\infty}$, if a Maslov index zero disc $(\phi,T,w)$ has its stop $u_0\in V_{\infty}$ then some of the edges will have to enter $V_{\infty}$. 
		From Remark \ref{position of vertex}, any affine line $l$ entering $V_{\infty}$ the corresponding parabola will have vertex outside of $U_{\infty}$. If $l_i$ are such affine edges of $\phi$ intersecting at a point $u_0\in V_{\infty}$, then the corresponding parabola of the outgoing edge from $u_0$ has its vertex outside of $U_{\infty}$. Take $W_{\infty}=\{\log{|u|}>4R\}$. Let $l$ be an affine ray from $u_0$ and the associate vertex with vertex outside $U_{\infty}$, then $|u|$ is strictly decreasing along $l\cap W_{\infty}$. The lemma follows from induction on edges of $\phi(T)\cap V_{\infty}$. 
		
	\end{proof}

	Then we reach the key lemma to deduce the finiteness of $l(\beta;u)$. 
	\begin{lem}\label{MI2 asym} If the broken line 
		$\mathfrak{b}$ has a bending point $\mathfrak{b}(t_i)\in W_{\infty}$, then $|u|$ is strictly decreasing along $\mathfrak{b}([t_i,0])$.  	
		In particular, there exists a sequence of nested open neighborhood 
		$U_{\infty}^n\searrow \{\infty\}$  
		of $\infty\in B$ such that if a broken line $\mathfrak{b}$ has bending point in $U_{\infty}^n$, then its end falls in ${U}_{\infty}^n$. 
	\end{lem}
	\begin{proof}
		Assume that the first bending point $\mathfrak{b}(t_1)$ of the broken line $\mathfrak{b}$ is outside $W_{\infty}$. If there is a bending point $\mathfrak{b}(t_j)$ inside $W_{\infty}$, then there is an edge $l_i$ of $\mathfrak{b}$ entering $W_{\infty}$ with $i\leq j$. Then by Lemma \ref{BPS ray behavior}, one has $\mathfrak{b}(t_i)\in U_{\infty}$. From Lemma \ref{MI0 sink}, one has $|u|$ is strictly decreasing and converging to $u=0$ along the affine ray extending the edge of $\phi_i$ adjacent to $\mathfrak{b}(t_i)$. Then $|u|$ is strictly decreasing along $l_{i+1}$ by the same argument of Lemma \ref{MI0 sink}. By induction, one shows that $|u|$ is strictly decreasing and along $\mathfrak{b}([t_{i},0])$.
		%
		
		Now we are back to the situation that the first bending point of the broken line $u_0=\mathfrak{b}(t_1)=e^{r_0+i\theta_0}\in W_{\infty}$. There is an associate Maslov index zero tropical disc and tropical disc of Maslov index two described in Example \ref{initial broken line} with end at $\mathfrak{b}(t_1)$. Let $l$ be the affine ray containing edge adjacent to the stop. The former corresponds to a parabola $C_{u_0,l}$ with equation given by (\ref{parabola}) and the later is given by $\theta=\frac{2m\pi}{nd}$. Then the other affine segment $l'$ adjacent to $\mathfrak{b}(t_1)$ corresponds to the parabola $C_{u_0,l'}$ given by 
		\begin{align} \label{parabola2}
			r^2-(\theta-\frac{2(m+1)\pi}{nd})^2=r_0^2-(\frac{2(m+1)\pi}{nd}-\theta_0)^2 + \frac{4\pi}{d}\mbox{Im}\epsilon'(u),
		\end{align} for some $|\epsilon'(u)|<C|u|$. 
		From the above claim, $C_{u_0,l'}$ has its peak outside of $U_{\infty}$. To finish the proof of the lemma, we need to show that the $l$ is above the symmetry axis of $C_{u_0,l}$ if and only if $l'$ is above the symmetry axis of $C_{u_0,l'}$. Assume that $l$ is the symmetry axis of $C_{u_0,l}$. From the above claim, the $r$-coordinate of the vertex of $C_{u_0,l}$ is larger than $-2R$. Since $r_0<-4R$, we have 
		\begin{align*}
			|\theta_0-\frac{2m\pi}{nd}|^2= |r_0-r_0'|\cdot| r_0+r_0'|>3R\cdot 4R\gg 1,
		\end{align*} where $r_0'$ is the $r$-coordinate of the vertex of $C$,
		from the equation (\ref{parabola}). Since the symmetry axes of $C,C'$ has distance 
		\begin{align*}
			|\frac{2m\pi}{nd}-\frac{2(m+1)\pi}{nd}|<\pi,
		\end{align*} $l,l'$ always appear in the same side with respect to their symmetry axes. The $|u|$ is decreasing along $l$ implies that it is decreasing along $l'$ and the lemma if proved by induction and  Lemma \ref{MI0 sink}. 
	\end{proof}
	
	\begin{lem}\label{initial}
		For $u$ close enough to infinity, there exists a unique broken line representing $\beta_0$. 
	\end{lem}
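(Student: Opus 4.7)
The plan is to handle existence and uniqueness separately, with the uniqueness argument reducing to a sign comparison between the direction of the last segment of any candidate broken line and the monotonicity supplied by Lemma~\ref{MI2 asym}.

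Existence is immediate from Example~\ref{initial broken line}: once $u$ is close enough to infinity that the radial affine segment in the monodromy invariant direction based at $u$ extends back to infinity without meeting the compact discriminant $\Delta\subset B$, that segment is a length-one broken line with $m_1=\beta_0$ and $c_1=1$. For uniqueness, I would first note that any length-one broken line representing $\beta_0$ is determined by its class and endpoint: the requirement that $\partial m_1=\partial\beta_0$ be positively proportional to $\mathfrak{b}'$ pins down its direction with sign, so the only length-one candidate is the one from Example~\ref{initial broken line}. The substance of the proof is therefore to rule out broken lines of length $n\geq 2$.

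The key observation is that Example~\ref{initial broken line} also fixes the \emph{sign} of $\partial\beta_0 \in H_1(L_u,\mathbb{R})\cong T_uB$: since the broken line there is parametrized from $t=-\infty$ (at infinity) to $t=0$ (at $u$), its endpoint velocity $\mathfrak{b}'(0)$, and hence $\partial\beta_0$, points \emph{away from infinity}. Any candidate $\mathfrak{b}$ of length $n\geq 2$ with $m_n=\beta_0$ must have its last segment approach $u$ in this same outward direction, which forces the preceding bending point to satisfy $|u(\mathfrak{b}(t_{n-1}))|<|u|$. On the other hand, once $u$ is chosen deep enough inside the nested neighborhood $U_\infty^n$ of Lemma~\ref{MI2 asym}, the argument in that lemma's proof — applied either when some bending point lies in $U_\infty'$ or when the last segment first crosses into $U_\infty$ at a non-bending point — yields $|u|$ strictly decreasing along the tail $\mathfrak{b}([\tau,0])$ for some $\tau\leq t_{n-1}$, and in particular $|u(\mathfrak{b}(t_{n-1}))|>|u|$. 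These two inequalities contradict each other, so no length-$n\geq 2$ broken line representing $\beta_0$ can end at such $u$.

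The main obstacle I expect is bookkeeping the sign conventions carefully: both the identification $T_uB\cong H_1(L_u,\mathbb{R})$ (which Section~\ref{affine structure} explicitly notes reverses orientation) and the parametrization convention for broken lines feed into the sign of $\partial\beta_0$, and the whole argument rests on these signs aligning so that Lemma~\ref{MI2 asym} genuinely opposes the outward direction of the last segment forced by Example~\ref{initial broken line}. A secondary technical point is verifying that the "endpoint inside $U_\infty'$ but no bending point inside" case reduces to the same monotonicity chain used in the proof of Lemma~\ref{MI2 asym}, which follows because the last segment is then forced to cross $\partial U_\infty'$ at an interior point at which Lemmas~\ref{BPS ray behavior}, \ref{MI0 sink}, and \ref{combine} can be applied just as in that proof.
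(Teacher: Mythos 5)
Your proposal is correct and takes essentially the same route as the paper, which likewise derives existence from Example \ref{initial broken line} and dismisses uniqueness as "a direct consequence of Lemma \ref{MI2 asym}"; your write-up simply fills in the sign bookkeeping (the last segment of any broken line with $[\mathfrak{b}]=\beta_0$ must move away from infinity, while Lemma \ref{MI2 asym} and Lemma \ref{BPS ray behavior} force $|u|$ to decrease along the tail) that the paper leaves implicit.
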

	\begin{proof}
		It suffices to show that the broken line in Example \ref{initial broken line} is the only one broken line representing $\beta_0$. The lemma is then is a direct consequence of Lemma \ref{MI2 asym}. 
		
	\end{proof}
	\begin{lem} \label{affine distance}
		Let $A,B$ be two disjoint compact sets and $l$ is an affine line segment connecting $A,B$. Then there exists a constant $C>0$ independent of $l$ such that $\int_{[l]}\omega_{TY}>C$. 
	\end{lem}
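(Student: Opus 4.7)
The plan is to interpret $\int_{[l]} \omega_{TY}$ as the symplectic area of the $2$-cylinder in $X$ obtained by sweeping the primitive fiber class $\gamma \in H_1(L,\mathbb{Z})$ associated (via the complex affine identification $T_uB \cong H_1(L_u,\mathbb{R})$) to the tangent direction of $l$, over the segment $l$. What needs to be shown is a uniform positive lower bound for this symplectic area when the endpoints of $l$ lie in the disjoint compact sets $A$ and $B$.

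The first step is to reformulate the area as a locally defined action/period. On any contractible neighborhood $U \subset B_0$, for each $\gamma \in H_1(L,\mathbb{Z})$ available throughout $U$, there is a smooth action function $a_\gamma : U \to \mathbb{R}$, uniquely defined up to constant, such that the symplectic area of the cylinder over an affine segment $l \subset U$ with tangent class $\gamma$ and endpoints $u_1, u_2$ is $a_\gamma(u_2) - a_\gamma(u_1)$. Equivalently, after the hyperK\"ahler rotation of Theorem \ref{res}, the relevant period becomes a symplectic affine coordinate, which controls the $\omega_{TY}$-area of the cylinder. Covering the compact set $K = A \cup B$ (and a bounded neighborhood of it) by finitely many such charts makes this bookkeeping global.

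I would then argue by contradiction: suppose there is a sequence $l_n$ of affine segments with endpoints $a_n \in A$ and $b_n \in B$ whose associated tangent classes $\gamma_n$ satisfy $\int_{[l_n]} \omega_{TY} \to 0$. By compactness of $A$ and $B$, pass to a subsequence with $a_n \to a \in A$, $b_n \to b \in B$; since $A,B$ are disjoint, $a \neq b$. For any fixed primitive class $\gamma$ that is parallel-transportable over a neighborhood of the segment joining $a$ and $b$, the action difference $a_\gamma(\cdot) - a_\gamma(\cdot)$ varies continuously in the pair of endpoints and is strictly positive at $(a,b)$ because the fibers $L_a$ and $L_b$ are distinct Lagrangians and the class $\gamma$ is nontrivial; consequently it is bounded below by some $C_\gamma > 0$ near $(a,b)$.

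The main obstacle, and the genuine content of the lemma, is the uniformity across the classes $\gamma_n$. I would handle this by a discreteness argument. On a fixed Riemannian metric on a compact set containing $K$, the affine segments from $A$ to $B$ have uniformly bounded length, hence uniformly bounded tangent vectors; under the complex affine identification $T_uB \cong H_1(L_u,\mathbb{R})$, the set of tangent classes which can occur as directions of such segments is contained in a bounded subset of the integer lattice $\Lambda_g$ (up to monodromy around the bounded region they traverse), hence is finite. For multiples $d\gamma$ of a primitive class, the symplectic area scales by $d$, so multiples only improve the bound. Taking the minimum of the $C_\gamma$ over this finite set of primitive classes yields the desired uniform constant $C > 0$, completing the proof.
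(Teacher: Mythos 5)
Your reduction of $\int_{[l]}\omega_{TY}$ to an action difference for the swept cylinder is the same starting point as the paper, which writes $\int_{[l]}\omega_{TY}=|\int_{[l]}\check{\Omega}|$ and, after hyperK\"ahler rotation, computes it as $\int_{u_0}^{u_1}\bigl(\int_{m+n\tau(u)}\iota_v\check{\Omega}\bigr)du$ in a local model $U\times\mathbb{C}/(\mathbb{Z}\oplus\mathbb{Z}\tau)$. The problem is your uniformity step. You claim that since the segments from $A$ to $B$ have bounded length they have bounded tangent vectors, and hence the integral classes $\gamma$ that can label such segments form a bounded, hence finite, subset of the lattice. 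This conflates the geometric tangent vector of the segment with its integral label: a segment of length $1$ in the direction of the primitive class $(n,1)$ exists for every $n$, and its parametrized tangent vector is a \emph{small} real multiple of $(n,1)$. Primitive integral directions are dense in the circle of directions, so two disjoint compact sets are in general joined by affine segments labeled by infinitely many distinct primitive classes of unbounded lattice norm. The finite minimum $\min_\gamma C_\gamma$ you want to take is therefore a minimum over an infinite set, and your argument gives no reason it is positive.

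The fix --- and this is what the paper's proof actually does --- is to get the uniformity pointwise rather than class-by-class: the rate of change of the action along the segment is the period $\int_{m+n\tau(u)}\iota_v\check{\Omega}$, which is bounded below by $|m+n\tau(u)|\cdot\min\|\check{\Omega}\|\geq C_1\cdot(m^2+n^2)^{1/2}\geq C_1$ uniformly over all nonzero $(m,n)\in\mathbb{Z}^2$ and all $u$ in the compact region traversed (because $\mathrm{Im}\,\tau$ is bounded away from $0$ there). Integrating over the segment then gives $\int_{[l]}\omega_{TY}\geq C_1\cdot\mathrm{length}(l)\geq C_1\cdot\mathrm{dist}(A,B)>0$, with no finiteness of the set of labels needed; classes of large norm only make the bound better, exactly in the spirit of your remark about multiples, but this must be invoked for \emph{all} large-norm classes, not just integer multiples of finitely many primitives. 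Your contradiction/compactness framework is not wrong for a fixed class, but as written it does not close the argument; replacing it with the pointwise period estimate does.
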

	\begin{proof}
		First observe that $\int_{[l]}\omega_{TY}=|\int_{[l]}\check{\Omega}|$ from (\ref{HK rel}). After hyperK\"ahler rotation, we may identify the elliptic fibration locally as $U\times \mathbb{C}/\mathbb{Z}\oplus \mathbb{Z}\tau$. Under the identification, we have $\check{\Omega}=f(u)du\wedge dx$ with $f(u)\neq 0$. We may assume that $\check{\Omega}=du\wedge dx$ up to a suitable change of coordinate. 
		
		First assume that $l\subseteq U$. Let $u_0,u_1$ be the end points of $l$.
		Assume that $l$ is labeled by the $m+n\tau(u)\in H_1(L_u,\mathbb{Z})$.
		Choose a Riemannian metric on $B_0$. $l_u$ be the sub affine segment from $u_0$ to $u\in l$.  Recall that $\frac{d}{du}\int_{[l_u]}\check{\Omega}=\int_{m+n\tau(u)} \iota_v\check{\Omega}>0$, where $v$ is a lifting of a unit tangent vector of $l'$ from $u_0$ to $u$. Then 
		\begin{align*}
			\int_{[l]}\check{\Omega}&=\int_{u_0}^{u_1}\big( \int_{m+n\tau(u)}\iota_v\check{\Omega} \big)du \\ 
			& >\int_{u_0}^{u_1} |m+n\tau(u)| \min{\parallel \check{\Omega}\parallel} du \\
			& > C_1 (m^2+n^2)^{\frac{1}{2}} \mbox{length}(l)\\
			& > C_1 \cdot 1 \cdot \mbox{dist}(A,B)
		\end{align*} where $C_1>0$ is a constant depending on the metric, $\tau$, norm of $\check{\Omega}$ over $U$.   
		
	\end{proof}
	We will further need the following lemma to make sense of the weighted count of broken lines. Recall that $l(\beta;u)$ be the largest possible length of broken lines representing $\beta\in H_2(X,L_u)$.
	\begin{lem}\label{well-defined}
		Given $\beta\in H_2(X,L_u)$, then
		\begin{itemize}
			\item $l(\beta;u)<\infty$, and
			\item there are only finitely many broken lines representing $\beta$. 
		\end{itemize} 
	\end{lem}
	\begin{proof} Given a broken line $\mathfrak{b}$ with $[\mathfrak{b}]=\beta$.
		Assume that $u\notin U_{\infty}^n$ for some $n\in \mathbb{N}$, then all bending of $\mathfrak{b}$ is out side of $U_{\infty}^n$ by Lemma \ref{MI2 asym}. Choose $\omega_i$, $i\gg 1$ such that $\pi(\mathcal{U}_i)^c\subseteq  U_{\infty}^n$. Then $L_u$ is a Lagrangian with respect to $\omega_i$. In particular, the integral $\int_{\beta}\omega_i$ is well-defined and independent of the representative of $\beta$.

		
		Now for each singularity $p$ of $B$, there exist disjoint pairs of neighborhoods $U_p\subseteq U'_p$ disjoint from $\mathcal{U}_i^c$ such that any affine line segment connecting $\partial U_p,\partial U'_p$ has area greater than $C_2$ again from Lemma \ref{affine distance}. For each bending of $\mathfrak{b}$ in $\mathcal{U}_i$, the corresponding tropical disc of Maslov index zero either contains in $U_p$ or connecting $\partial U_p$ and $p$ for some singularity $p$. In the former case, either the tropical disc of Maslov index zero is in the above second case or the edge connecting the two bending point connects $\partial U_p,\partial U_{p'}$ for some singularities $p,p'$ and contributes more than $C_3$ by Lemma \ref{affine distance}. 
		To sum up, the number of bending of $\mathfrak{b}$ is bounded by $2\lambda/ \max{\{C_i\}}$. This implies the first part of the lemma that $l(\beta,u)$ is finite.

		
		Assume that there is a sequence of broken lines $\mathfrak{b}_n$ with $[\mathfrak{b}_n]=\beta$. Let $\mathcal{U}_i\in l$ be the last bending point of $\mathfrak{b}_i$. 
		Thus, there exists broken lines $\mathfrak{b}'_i$ and tropical disc $\phi_i$ of Maslov index zero with ends at $\mathcal{U}_i$ such that $[\mathfrak{b}'_i]+[\phi_i]=\beta$. By definition of the broken line and the correspondence theorem, there exists a holomorphic disc with boundary on $\mathcal{U}_i$ with relative class $[\phi_i]$. Notice that 	
		$\int_{[\phi_i]}\omega_i<\int_{\beta}\omega_i<\infty$ for a fixed $i\gg 1$. By compactness theorem, the above sequence of Maslov index zero discs has a convergent subsequence. In particular, $[\phi_i]$ converges after passing to a subsequence.
		
		Since Maslov index zero tropical discs are rigid and there are only finitely  many tropical tropical discs of Maslov index zero of a given homology class, we have $\mathcal{U}_i$ is constant (after passing to a subsequence). In particular, (after passing to a subsequence) $[\mathfrak{b}_n]$ has the same last edge and bending. By induction on the maximal number of bending points, the sequence stabilizes and there is only finitely many broken lines with class $\beta$. 
	\end{proof}
	Here the proof of Lemma \ref{well-defined} does largely rely on the correspondence theorem for discs of Maslov zero. However, the author would expect a purely tropical proof. 
	The direct consequence of Lemma \ref{well-defined} is the definition of the weighted count of degenerate broken lines.
	\begin{definition} \label{broken line counting}
		\begin{enumerate}
			\item Given a homology class $\beta\in H_2(X,L_u)$, define $n^{trop}(u)$ to be the weighted count of the degenerate broken lines 
			\begin{align*}
				n^{trop}_{\beta}(u)=\sum_{\mathfrak{b}:[\mathfrak{b}]=\beta} \mbox{Mono}(\mathfrak{b}),
			\end{align*} where the sum is over all possible degenerate broken lines representing $\beta$.
			\item Fix a symplectic form $\omega$ on $Y$ compatible with the standard complex structure $J$ and $L_u$ is a Lagrangian with respect to $\omega$. Denote the tropical superpotential $W^{trop}(u)$ to be 
			\begin{align*}
				W^{trop}(u):=\sum_{\beta\in H_2(X,L_u)} n^{trop}_{\beta}(u)T^{\omega(\beta)}z^{ \beta}.
			\end{align*}
		\end{enumerate}
		
	\end{definition}
	The proposition below follows from Lemma \ref{initial} directly. 
	\begin{prop}
		$n_{\beta_0}^{trop}(u)=1$ for $u$ close enough to the infinity. 
	\end{prop}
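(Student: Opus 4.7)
The plan is that this proposition follows essentially immediately by combining the two preceding results: Lemma \ref{initial} shows uniqueness of the broken line representing $\beta_0$ (when $u$ is sufficiently close to $\infty$), and Example \ref{initial broken line} identifies that unique broken line and computes its weight.

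In more detail, I would proceed as follows. First, by Lemma \ref{initial}, for $u$ in a sufficiently small neighborhood of $\infty$ there is exactly one broken line $\mathfrak{b}$ with $[\mathfrak{b}] = \beta_0$. Second, Example \ref{initial broken line} exhibits this broken line explicitly: it has length $n = 1$, its image is the unique affine ray from $u$ to infinity in the monodromy-invariant direction, its associated flat section is $z^{\beta_0}$, and its weight is $\mathrm{Mono}(\mathfrak{b}) = c_1 = 1$. Plugging into Definition \ref{broken line counting}, the sum defining $n^{trop}_{\beta_0}(u)$ collapses to a single term equal to $1$.

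Since every substantive ingredient (finiteness of contributing broken lines via Lemma \ref{well-defined}, uniqueness via Lemma \ref{MI2 asym} and Lemma \ref{initial}, and the explicit weight from Example \ref{initial broken line}) is already in place, there is no genuine obstacle here; the argument is just an assembly. The one thing worth making explicit in the write-up is that no degenerate broken line with class $\beta_0$ can appear either, because Lemma \ref{MI2 asym} confines all bending of any broken line ending near $\infty$ to a neighborhood of $\infty$, and this region contains no singularities and no non-trivial walls of $\mathfrak{D}$ that could produce additional contributions to the class $\beta_0$. Hence the proof reduces to one or two lines invoking Lemma \ref{initial} and Example \ref{initial broken line}.
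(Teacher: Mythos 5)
Your proposal is correct and follows essentially the same route as the paper: the paper deduces the proposition directly from Lemma \ref{initial}, whose proof identifies the unique broken line representing $\beta_0$ near infinity with the weight-one broken line of Example \ref{initial broken line} via Lemma \ref{MI2 asym}, so the sum in Definition \ref{broken line counting} collapses to the single term $1$. Your added remark ruling out degenerate contributions is a reasonable extra precaution but does not change the argument.
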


	The dependence of $n^{trop}_{\beta}(u)$ on $u\in B_0$ is discussed in \cite[Theorem 4.12]{G9}. We briefly recall the wall-crossing of $n^{trop}_{\beta}(u)$ below to make the paper self-contained: 
	Fix $u_0\in B_0$ and $\beta\in H_2(X,L_{u_0})$. Choose the K\"ahler form to be $\omega_i$, $i\gg 1$ such that all the possible bendings of broken lines representing $\beta$ is contained in $\pi(\mathcal{U}_i)$ by Lemma \ref{MI2 asym}. Then each possible associated tropical discs of Maslov index zero has area less than $\omega_i(\beta)$. Then by the compactness theorem and the correspondence theorem (Theorem \ref{116}), there exist only finitely many sets $\{l_{\mathfrak{d}_i}\}$ of rays in the scattering diagram $\mathfrak{D}$ satisfying the below properties: 
	\begin{enumerate}
		\item $l_{\mathfrak{d}_i}$ passes through $u_0$. 
		\item there exists a broken line end at $u_0$ representing $\beta'\in H_2(X,L_{u_0})$ and 
		\item $\beta=\beta'+\sum_i\gamma_{\mathfrak{d}_i}$.
	\end{enumerate}
	Assume that there is no set of rays $l_{\mathfrak{d}_i}$ with the property above. Then same is true for an open neighborhood of $u_0$. The counting of broken lines $n^{trop}_{\beta}(u)$ is then constant when $u$ is in a neighborhood of $u_0$ after we identify $\beta$ via parallel transport from the definition. 
	
	In general, $l_{\mathfrak{d}_i}$s divide a neighborhood of $u_0$ into finitely many chambers by Lemma \ref{well-defined}. For $u_1,u_2$ near $u_0$, one has $n^{trop}_{\beta}(u_1)=n^{trop}_{\beta}(u_2)$ if they are in the same chamber from the discussion above. Otherwise, one can reach $u_2$ from $u_1$ via passing through finitely many $l_{\mathfrak{d}_i}$s. Notice that if $\langle \gamma_{\mathfrak{d}},\gamma_{\mathfrak{d}'}\rangle=0$ and $l_{\mathfrak{d}},l_{\mathfrak{d}'}$ intersect at $u_0$, then $l_{\mathfrak{d}}=l_{\mathfrak{d}'}$ locally near $u_0$. In this case, it straight-forward to check that $\mathcal{K}_{\mathfrak{d}}(u), \mathcal{K}_{\mathfrak{d}'}(u)$ commute with each other and the order in the product doesn't matter. It suffices to understand how $n^{trop}_{\beta}(u)$ varies when $u$ move across a ray $l_{\mathfrak{d}_i}$. Notice that from the previous discussion, one may assume that $n^{trop}_{\beta'}(u)$ is constant in each chamber for all possible $\beta'$ associate to such sets.   
	Under the above assumption, the additional broken lines only appear in one side of $l_{\mathfrak{d}_i}$-where $\gamma_{\mathfrak{d}_i}$ is larger if one identify a neighborhood $u_0\in l_{\mathfrak{d}_i}$ with a neighborhood of $0\in H^1(L_{u_0},\mathbb{R})$ via the developing map. Assume that $(\gamma_{\mathfrak{d}_i},u_1)>(\gamma_{\mathfrak{d}_i},u_2)$, where $(,)$ is the natural pairing between homology and cohomology. 
	Then one has 
	\begin{align*}
		n^{trop}_{\beta}(u_1)-n^{trop}_{\beta}(u_2)= \sum_{\beta'}n_{\beta'}(u)c\big(f_{\mathfrak{d}_i}^{\langle \beta',\gamma_{\mathfrak{d}_i}\rangle}z^{\beta'},z^{\beta} \big),
	\end{align*} where $c(f,z^{\beta})$ denotes the coefficient of $z^{\beta}$ in $f$
	from (\ref{broken line}). Equivalently, one can derived $n^{trop}_{\beta}(u_2)$ from the coefficient of $z^{\beta}$ via the relation
	\begin{align*}
		W^{trop}(u_1)=\mathcal{K}_{\mathfrak{d}_i}W^{trop}(u_2).
	\end{align*}  

	\section{Floer Theory of Special Lagrangian Fibration on HyperKähler Surfaces}  This section is adapted from the earlier work in \cite{L4}\cite{L8}, which establish the correspondence theorem of holomorphic discs and tropical discs of Maslov index zero. 
	\subsection{$A_{\infty}$ Structures from Lagrangian Floer Theory}
	In this section, we have a brief review of some general results of $A_{\infty}$-structures from Lagrangian Floer theory developed by Fukaya-Oh-Ohta-Ono \cite{FOOO} and Fukaya \cite{F1}. 
	For the convergence issue, we first introduce the Novikov field $\Lambda$, 
	\begin{align*}
		\Lambda:=\mathbb{C}((T^{\mathbb{R}}))= \{ \sum_{i\geq 0}^{\infty} a_iT^{\lambda_i}| a_i\in \mathbb{C}, \lambda_i\nearrow \infty \}.
	\end{align*} Here $T$ is a formal variable. For the later purpose, we will also use the notation 
	\begin{align*}
		\Lambda_+=\{\sum_ia_iT^{\lambda_i}\in \Lambda| \lambda_i>0 \}. 
	\end{align*}  
	Now let $(X,\omega)$ be a compact symplectic manifold and $L\subseteq X$ be a relatively spin Lagrangian. Let $\mathcal{M}_{k,\beta}(X,L)$ (or $\mathcal{M}_{\beta}(X,L)$) denote the moduli space of stable discs in $X$ with boundaries on $L$ and relative class $\beta\in H_2(X,L)$ and $k$ (or without) boundary marked points. Then via the boundary relations of $\mathcal{M}_{k,\beta}(X,L)$ for all $k,\beta$, Fukaya-Oh-Ohta-Ono \cite{FOOO} and Fukaya \cite{F1} constructed an $A_{\infty}$-structure on $H^*(L,\Lambda)$. 
	\begin{thm}\cite[Corollary 12.1]{F1}
		There exits operators $m_{k,\beta}:H^*(L,\mathbb{R})^{\otimes k}\rightarrow H^*(L,\mathbb{R})$, $k\geq 0$ such that the operators 
		\begin{align*}
			m_k= \sum_{\beta}m_{k,\beta}T^{\omega(\beta)}: H^*(L,\Lambda)^{\otimes k}\rightarrow H^*(L,\Lambda)
		\end{align*} satisfy the $A_{\infty}$-relations 
		\begin{align*}
			\sum_{i,k_1+k_2=k+1}(-1)^*m_{k_1}(x_1,\cdots m_{k_2}(x_{i}\cdots x_{i+k_2-1}), x_{i+k_2}\cdots x_k)=0,
		\end{align*} for any $x_i\in H^{n_i}(L,\Lambda)$, where $*=\sum_i (n_i-1)$. 
	\end{thm}  	We will particular use the following version which is proved via the so-called de Rham model such that the divisor axiom \cite[Lemma 13.2]{F1} holds. 
	\subsection{Fukaya's Trick and Open Gromov-Witten Invariants}
	Now let $Y$ be a del Pezzo surface, $D$ be a smooth anti-canonical divisor and $X=Y\setminus D$.
	Although $X$ is non-compact, the moduli space of holomorphic discs $\mathcal{M}_{\gamma}(X,L_u)$ is still compact by Proposition 5.3 \cite{CJL} together with the usual Gromov compactness theorem (see also Theorem 4.10 \cite{G7}) and the Floer theory still applies.                
	The virtual dimension of $\mathcal{M}_{\gamma}(X,L_u)$ is negative one and thus $L_u$ does not bound any holomorphic discs for a generic almost complex structures. Indeed, we first recall the following well-known observation:
	\begin{lem} \label{11}
		If $L_{u(t)}$ bounds a holomorphic disc in $X$ of relative class $\gamma$ for every $t$, then $u(t)$ fall in an affine hypersurface of $B_0$.
	\end{lem}
	\begin{proof}
		Assume that $L_{u(t)}$ bounds a holomorphic disc in the relative class $\gamma$. Write $\gamma=a\gamma_1+b\gamma_2+\gamma_0$, where $\gamma_0\in H_2(X,\mathbb{Z})$. Then we have 
		\begin{align*}
			0&=\int_{\gamma_{u(t)}}\mbox{Im}\Omega\\
			&=\int_{a\gamma_{1,u(t)}+b\gamma_{2,u(t)}+\gamma_{0}}\mbox{Im}\Omega\\
			&= af_1(u(t))+bf_2(u(t))+ \int_{\gamma_0}\mbox{Im}\Omega. 
		\end{align*} The last term is a constant and thus the proposition follows.
	\end{proof}
	To detect the holomorphic discs bounded by the torus fibre above $l_{\gamma}$, we use the so-called Fukaya's trick which we now explain below: Fix a reference special Lagrangian $L_p$ and a path $\phi$ from $u_-$ to $u_+$ near $p$ on $B$. There exists a $1$-parameter family of diffeomorphisms $\phi_t:X\rightarrow X$ such that $\phi(L_p)=L_{\phi(t)}$. We will consider the $1$-parameter family of almost (actually integrable) complex structures $J_t=(\phi_t)^{-1}_*J$ on $X$ induced by the locally indecomposable $2$-form $\phi_t^*\Omega$. When $\phi$ is close enough to $p$, the almost complex structures are tamed by $\omega$ and the compactness still applies. Moreover, the moduli spaces of stable discs 
	\begin{align*}
		\mathcal{M}_{\gamma}(X,J_t,L_p)  \mbox{ and } \mathcal{M}_{(\phi_t)^{-1}_*\gamma}(X,J,L_{\phi(t)})
	\end{align*} are naturally identified and their corresponding Kuranishi structures agree. From \cite[Theorem 11.1]{F2}, the $1$-parameter family of tamed almost complex structures induces a pseudo-isotopy between two $A_{\infty}$ structures on $H^*(L_p)$. One comes from the holomorphic discs with boundaries on $L_{u_+}$ and another comes from the holomorphic discs with boundaries on $L_{u_-}$. In particular, the Maurer-Cartan moduli spaces associated to these two moduli spaces are isomorphic. 
	\begin{align*}
		F_{(\phi,p)}: H^1(L_{p},\Lambda_+)\cong H^1(L_{p},\Lambda_+).
	\end{align*}
	In general, $F_{(\phi,p)}$ may not be the identity but records the information from the holomorphic discs with boundaries on $L_{\phi(t)}$. 
	Recall that homotopic paths induce homotopic $A_{\infty}$ homomorphisms by \cite[Theorem 2.7]{T4}.
	Together with \cite[Lemma 4.3.15]{FOOO} which proves that the homotopic $A_{\infty}$-homomorphisms induced by the same map on the Maurer-Cartan spaces, we have 
	\begin{thm}\label{66} 
		Assume that $\phi$ is a loop homotopic to a constant loop on $B_0$ such that 
		\begin{enumerate}
			\item the homotopy is contained in a small enough open subset of $p$ and
			\item for all $u$ swept by the homotopy, $L_u$ does not bound any holomorphic discs of negative Maslov index.
		\end{enumerate}
		Then $F_{(\phi,p)}=\mbox{Id}$.  
	\end{thm}
	Due to the degree reason, only Maslov index zero discs will contribute to $F_{(\phi,p)}$. 
	To define the open Gromov-Witten invariants of Maslov index zero for the relative class $\gamma$, we may want to avoid certain real codimension one boundary of the moduli space. From Lemma \ref{11}, this only happens on an intersection of $l_{\gamma_1}$ and $l_{\gamma_2}$ with $\gamma=\gamma_1+\gamma_2$. By Gromov compactness theorem, locally there are only finitely many ways of such decomposition.       
	Assume that $\langle\gamma_1,\gamma_2\rangle\neq 0$, then such intersections are isolated points on $l_{\gamma}$ by maximum principle. Indeed, those intersection points satisfy        
	\begin{align} \label{74}
		\mbox{Arg}Z_{\gamma_1}(u)=\mbox{Arg}Z_{\gamma_2}(u),
	\end{align} where $Z_{\gamma}=\int_{\gamma}\omega-i\mbox{Im}\Omega$. Thanks to the fact that $Z$ is a holomorphic function, the locus defined by equation (\ref{74}) is a real codimension one submanifold on $l_{\gamma}$.
	Now let $u_0\in l_{\gamma}$ and $u\notin W_{\gamma}$, where 
	\begin{align*}
		W_{\gamma}=\bigcup_{\substack{\gamma_1+\gamma_2=\gamma \\ \langle \gamma_1,\gamma_2\rangle\neq 0}}W_{\gamma_1,\gamma_2}, \hspace{5mm}       W_{\gamma_1,\gamma_2}=\{u\in B|\mathcal{M}_{\gamma_i}(X,L_u)\neq \emptyset \}.
	\end{align*}        
	Let $u_{\pm}$ be in a small neighborhood of $u$ such that 
	\begin{align} \label{order}
		\mbox{Arg}Z_{\gamma}(u_+)>\mbox{Arg}Z_{\gamma}(u)>\mbox{Arg}Z_{\gamma}(u_-).
	\end{align} From the above discussion, there exists an isomorphism 
	\begin{align*}
		F_{(\phi,u)}:H^1(L_u,\Lambda_+)\cong H^1(L_u,\Lambda_+).
	\end{align*}
	Let $e_1,e_2$ be an integral basis of $H_1(L_{u},\mathbb{Z})$ and write $b=x_1e_1+x_2e_2\in H^1(L_{u},\Lambda_+)$. Motivated by the SYZ mirror symmetry, it is natural to consider $z_k=\exp{x_k}$ and the induced algebra homomorphism \footnote{It is actually an isomorphism \cite{T4}.}
	\begin{align*}
		\Lambda[[H_1(L_{u})]] \rightarrow \Lambda[[H_1(L_{u})]] \\
		F_{(\phi,u)}:z_k\mapsto \exp{\big(F_{(\phi,u)}(b)_k\big)},
	\end{align*} where $F_{(\phi,u)}(b)=F_{(\phi,u)}(b)_1e_1+F_{(\phi,u)}(b)_2e_2$ and $z^{\partial \gamma}:=z_1^{\langle \partial\gamma,e_1\rangle} z_2^{\langle \partial \gamma,e_2\rangle}$.
	
	There is no affine line $l_{\gamma}=l_{\gamma'}$ unless $\gamma$ is a multiple of $\gamma'$. We would choose as the sequence $\lambda_i\rightarrow \infty$, pairs of points $u_+^i,u_-^i$ and paths $\phi_i$ connecting $u_{\pm}^i$ as above such that 
	\begin{enumerate}
		\item $\lim_{i\rightarrow \infty}u^i_{\pm}=u$.
		\item $|Z_{\gamma}|$ is increasing along $\phi_i$.
		\item There exists no $\gamma'\in H_2(X,L_{\phi_i(t)})$ such that $|Z_{\gamma}(u)|<\lambda_i$ and there exists a holomorphic disc with boundary on $L_{\phi_i(t)}$ in the relative class $\gamma'$.
	\end{enumerate}
	Then by  \cite[Theorem 6.13]{L8}, the transformation $F_{(\phi_i,u)}$ is of the form 
	\begin{align*}
		F_{(\phi_i,u)}:z^{\partial \gamma'}\mapsto z^{\partial \gamma'}{f^i_{\gamma}(u)}^{\langle \gamma',\gamma\rangle} \mbox{ (mod $T^{\lambda_i}$)},
	\end{align*} where $\langle\gamma',\gamma\rangle$ is the intersection pairing of the corresponding boundary classes in $H_1(L_{u},\mathbb{Z})$ and $f^i_{\gamma}(u)\in 1+\Lambda_+[z^{\partial \gamma}]$. From Theorem \ref{66}, we have $f_{\gamma}(u):=\lim_{i\rightarrow \infty}f^i_{\gamma}(u)$ exists. Now we will define the open Gromov-Witten invariants as follows: 
	\begin{definition}\label{openGW}
		Given $\gamma\in H_2(X,L_u)$ and $u\notin W'_{\gamma}$. Then the open Gromov-Witten invariants $\tilde{\Omega}(\gamma;u)$ are defined by  
		\begin{align} \label{69}
			\log{f_{\gamma}(u)}=\sum_{d\geq 1}d\tilde{\Omega}(d\gamma;u)(T^{Z_{\gamma}(u)}z^{\partial\gamma})^d.
		\end{align} and we define the algebra isomorphism 
		\begin{align} \label{2016}
			\mathcal{K}_{\gamma}(u):\Lambda[[H_1(L_u)]]\rightarrow \Lambda[[H_1(L_u)]] \notag \\
			z^{\partial\gamma'}\mapsto z^{\partial \gamma'}f_{\gamma}(u)^{\langle \gamma',\gamma\rangle},
		\end{align} which is of the same form of (\ref{2016'}). 
	\end{definition}
	\begin{rmk} Here we defined the open Gromov-Witten invariants via the wall-crossing properties but
		we refer the readers to \cite[Theorem 6.29]{L8} for the comparison with the reduced counting of Maslov index zero discs. 
	\end{rmk}

	Motivated by the Gopakumar-Vafa conjecture \cite{GV}\cite{IP}, one can formulate the follow open analogue of the Gopakumar-Vafa conjecture\footnote{See also the corresponding integrality of log Gromov-Witten invariants for log Calabi-Yau surfaces \cite{CKGT}}: 
	\begin{conj} \label{12}
		There exist integers $\Omega(\gamma;u)$ such that 
		\begin{align} \label{13}
			\tilde{\Omega}(d\gamma;u)=-\sum_{k|d}c(\gamma)^d\frac{\Omega(\frac{d}{k}\gamma;u)}{k^2},
		\end{align} where $c=\pm 1$ is a quadratic refinement. Here we use the convention that $\Omega(\frac{d}{k};u)=0$ if $\gamma$ is not divisible by $k$. 
	\end{conj}
	Via M\"obius transformation, one can rewrite (\ref{13}) and the conjecture becomes
	\begin{align} \label{multiple cover}
		\Omega(d\gamma;u)=-\sum_{k|d}c(\gamma;u)^{\frac{d}{k}}\mu(k)\frac{\tilde{\Omega}(\frac{d}{k}\gamma;u)}{k^2}\in \mathbb{Z},
	\end{align} where $\mu(k)$ is the M\"obius function.

	\subsection{Correspondence Theorem}
	
	It is a folklore statement that near a focus-focus singularity there are two parameter families of holomorphic discs. Each of them is regular in a $1$-parameter family. It is proved for the case when locally the fibration and the almost complex structures are modeled by $\mathbb{C}^2$ \cite{FOOO5} or the Ooguri-Vafa space \cite{C}. 
	
	\begin{lem} \label{initial discs}
		Let $X\rightarrow U$ be a germ of elliptic fibration with a holomorphic volume form $\Omega$. Then 
		\begin{align*}
			\tilde{\Omega}(\gamma;u)=\begin{cases} \frac{(-1)^{d-1}}{d^2} & \mbox{if } \gamma= d\gamma_e  \\  0 & \mbox{otherwise} . \end{cases}
		\end{align*}
		Here $u\in U$ and $\gamma_e$ is the class of Lefschetz thimble with $\int_{\gamma_e}\omega_{TY}>0$. 
	\end{lem} 
	\begin{proof}
		Given two K\"ahler forms $\omega_1$ and $\omega_{2}$ and set $\omega_t=t\omega_1+(1-t)\omega_{2}$. Then $d\omega_t=0$ and $\omega_t(\cdot, J\cdot)>0$, where $J$ is the complex structure of the elliptic fibration. In particular, $\omega_t$ are non-degenerate and thus symplectic forms. We will recall the following modification of the hyperK\"ahler rotation \cite{HL}\cite{G4}:
		Given a K\"ahler form $\omega$ on $X$, one has $\omega\wedge \Omega=0$ for type reason. There exists a positive function $h$ such that $(h\omega)^2=(\mbox{Im}\Omega)^2$. Set $\omega'=h\omega$ and $\Omega_{\theta}=\omega'-i\mbox{Im}(e^{-i\theta}\Omega)$, then $\Omega_{\theta}^2=0$. Thus, $\Omega_{\theta}$ is decomposable and defines an almost complex structure $J_{\theta}$, which is integrable if and only if $\omega$ is Ricci-flat. Let $\omega_{\theta}=-\mbox{Re}(e^{-i\theta}\Omega)$ is a symplectic form on $X$ and it is straight-forward to check pointwisely that $J_{\theta}$ is a tamed almost complex structure. Without loss of generality, we may assume that $\Omega_{\theta}$ is positive on the fibres by exchanging $\omega_{\theta}\leftrightarrow -\omega_{\theta}$ and $J_{\theta}\leftrightarrow -J_{\theta}$.  The original fibration is "special Lagrangian" in the sense that the new symplectic form $\omega_{\theta}$ and pseudo-holomorphic volume form $\Omega_{\theta}$ restricted to zero on the fibres. 
		
		Thus, $\omega_t$ induces the interpolation of $S^1$-families of almost complex structures induced from $\omega_1,\omega_2$. Then similar argument in \cite[Corollary 4.23]{L8} shows that the invariant does not depends on the $t$. By choosing $\omega_1$ to be the Ricci-flat metric on the rational elliptic surface and $\omega_2$ to be the Ooguri-Vafa metric, then the lemma follows from \cite[Theorem 4.44]{L8}. The proof is motivated from the split attractor flows of counting of black holes \cite{DM}.
	\end{proof}
	Now we are ready to have the tropical/holomorphic correspondence of discs for the log Calabi-Yau surfaces with the proof slightly adapted from \cite{L8}. 
	\begin{thm} \label{116}
		Let $u\in B_0$ and $\gamma\in H_2(X,L_u)$ such that $\tilde{\Omega}(u,\gamma)$ is well-defined, then $\tilde{\Omega}^{trop}(\gamma;u)$ is well-defined and 
		\begin{align*}
			\tilde{\Omega}(\gamma;u)=\tilde{\Omega}^{trop}(\gamma;u). 
		\end{align*}
	\end{thm}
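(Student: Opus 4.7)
The plan is to build a scattering diagram $\mathfrak{D}_{hol}$ whose rays and slab functions record the open Gromov-Witten data $\tilde{\Omega}(\gamma;u)$, verify that it contains the initial scattering diagram $\mathfrak{D}_{in}$ and that it is consistent in the sense $\theta_{\mathfrak{D}_{hol},u}=\mathrm{id}$ for every $u\in B_0$, and then appeal to the uniqueness clause of Theorem \ref{complete} to conclude $\mathfrak{D}_{hol}=\mathfrak{D}$. Reading off the slab functions on matching rays then yields the identification of $\tilde{\Omega}(\gamma;u)$ with $\tilde{\Omega}^{trop}(\gamma;u)$.

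To build $\mathfrak{D}_{hol}$, note that Lemma \ref{11} shows that for each primitive $\gamma$ the locus where $L_u$ bounds a holomorphic disc of class $\gamma$ is an affine hypersurface; orient it in the direction along which $\omega_{TY}(\gamma)$ increases and assign the slab function $f_{\gamma}(u)$ from Definition \ref{openGW}. Lemma \ref{initial discs} identifies the contributions of the Lefschetz-thimble discs near each focus-focus singularity with the factor $f_\pm=1+T^{\omega_{TY}(\gamma_\pm)}z^{\gamma_\pm}$ prescribed in Definition \ref{intial} (and similarly for the five rays at a type $II$ singular fibre from the local model of the author's \cite{L12}), showing $\mathfrak{D}_{in}\subseteq \mathfrak{D}_{hol}$.

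The consistency $\theta_{\mathfrak{D}_{hol},u}=\mathrm{id}$ will come from Fukaya's trick: fix $u\in B_0$ and take a small counterclockwise loop $\phi$ encircling $u$, contractible to a constant through a neighborhood on which Theorem \ref{66} applies. In this complex-surface setting Maslov index zero discs are the only ones that can affect the Maurer-Cartan space (all other classes have strictly positive Maslov index by the adjunction/dimension count), so $F_{(\phi,u)}=\mathrm{Id}$. Unpacking this composition $F_{(\phi,u)}=\prod_i \mathcal{K}_{\gamma_i}(u)^{\epsilon_i}$ along the crossings of $\phi$ with the rays of $\mathfrak{D}_{hol}$, via the wall-crossing formula (\ref{2016}) established in Theorem 6.13 of \cite{L8}, one recognises exactly $\theta_{\phi,\mathfrak{D}_{hol}}^{u,\lambda}=\mathrm{id}$ modulo $T^\lambda$ for every $\lambda$, which is the required consistency.

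The principal difficulty is the Novikov-adic induction that underlies this argument: one must show that, modulo $T^\lambda$, only finitely many rays pass near $u$ and that any ray forced by the scattering equation at a collision of lower-energy rays is actually realised by holomorphic discs rather than remaining a purely formal solution. The first point follows from Gromov compactness using the area-controlled K\"ahler forms $\omega_i$ of Lemmas \ref{6} and \ref{bound}, which make the special Lagrangian fibres Lagrangian with uniformly controlled symplectic energy. The second point is the core of the correspondence: at each collision the output disc of the scattering equation is represented by a tropical disc with a single new interior trivalent vertex, which lifts to a family of holomorphic discs by gluing the two input discs (holomorphic by the inductive hypothesis at strictly smaller $\omega_{TY}$-energy). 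This gluing step, together with the degree/valency bookkeeping $l(\gamma;u)$, is the part genuinely adapted from Section 6 of \cite{L8}. Combining it with the uniqueness of the complete scattering diagram closes the induction and yields $\tilde{\Omega}(\gamma;u)=\tilde{\Omega}^{trop}(\gamma;u)$.
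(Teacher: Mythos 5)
Your proposal is correct and follows essentially the same route as the paper: matching the initial data near the singular fibres via Lemma \ref{initial discs}, extracting consistency of the holomorphic wall-crossing data from Fukaya's trick (Theorem \ref{66}), and concluding by the uniqueness of the consistent scattering diagram (Theorem \ref{complete}), with a Novikov-adic/energy induction to control finiteness. The paper merely packages the same ingredients as an induction on $l(\gamma;u)$ together with a ``split attractor flow'' lemma that traces each affine ray carrying a nonzero invariant back to the initial rays, which is the content of your realizability step.
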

	\begin{proof}
		We will prove by induction on $l(\gamma;u)$. Assume that $l(\gamma;u)=1$, then there exists a unique tropical disc $(h,T,w)$ with end on $u$ and $[h]=\gamma$. Moreover, $h(T)$ is an affine line segment between a singularity and $u$ described in Definition \ref{initiall}. The assumption $l(\gamma;u)=1$ implies that $\tilde{\Omega}^{trop}(\gamma;u)=\frac{(-1)^{d-1}}{d^2}$, where $d$ is the divisibility of $\gamma$. In these cases, the theorem reduces to Lemma \ref{initial discs} and \cite[Theorem 4.11]{L12} when $u$ is close enough to a singularity. Now we will have the idea from split attractor flows 
		\begin{lem}\label{split attractor flow}
			Assume that $\tilde{\Omega}(\gamma;u)\neq 0$, then there exists a tropical disc ending at $u$.   
		\end{lem}		
		\begin{proof}
			Let $l$ be the affine ray emanating from $u$ defined by $\gamma$ such that $\int_{\gamma}\omega_{TY}$ is decreasing along $l$. First observe that there exists $\lambda>0$ such that if $\int_{\gamma}\omega_{TY}<\lambda$, then $\gamma$ is one of the relative classes (or their multiple) described in Definition \ref{initiall} by the monotonicity of holomorphic discs. In particular, the lemma holds when $\int_{\gamma}\omega_{TY}<\lambda$. 
			
			Assume that $\tilde{\Omega}(\gamma;u)$ is invariant when $u$ moves along $l$ and $\gamma$ is the parallel transport. Then $\int_{\gamma}\omega_{TY}$ decreases to zero at some point and $\gamma$ is one of those discs (or the multiple of them) described in Definition \ref{initiall}. Otherwise, notice that those jumps happen discretely by Gromov compactness theorem. Thus one may assume that $\tilde{\Omega}(\gamma;u)$ first jumps at $u'$.
			Then by Theorem \ref{66}, there exist $\gamma_i$ such that
			\begin{enumerate}
				\item $\sum_i \gamma_i=\gamma$,
				\item  $\tilde{\Omega}(\gamma_i;u')\neq 0$,
				\item  $\langle \gamma_{i_1},\gamma_{i_2}\rangle \neq 0$, and 
				\item $\int_{\gamma_i,u'}\omega< \int_{\gamma,u}\omega_{TY}$.
			\end{enumerate} By induction on the real number $\int_{\gamma,u}\omega_{TY}$, we may assume that there exist tropical discs representing $\gamma_i$ and end at $u'$. The gluing of these tropical discs and the union of the part of $l$ from $u'$ to $u$ is a tropical disc with end at $u$, where the balancing condition at $u'$ follows from $\sum_i \gamma_i=\gamma$.

		\end{proof}
		Similar to the above proof of Lemma \ref{split attractor flow}, there exists a tropical disc with end at $u$ and relative class $\gamma$ with more than two vertices unless $\tilde{\Omega}(\gamma;u)=\frac{(-1)^{d-1}}{d^2}$ and $\gamma=d\gamma_e$. In particular, this implies that $l(\gamma;u)>1$. Therefore, we have $\tilde{\Omega}(\gamma;u)=\frac{(-1)^{d-1}}{d^2}=\tilde{\Omega}^{trop}(\gamma;u)$ if $l(\gamma;u)=1$. 
		
		Assume that $\tilde{\Omega}(\gamma;u)=\tilde{\Omega}^{trop}(\gamma;u)$ is true for $l(\gamma;u)\leq k$. Let $(u,\gamma\in H_2(X,L_u))$ be a pair with $l(\gamma;u)=k+1$. 
		Again let $l$ be the affine ray emanating from $u$ defined by $\gamma$ such that $\int_{\gamma}\omega_{TY}$ is decreasing along $l$ and assume that $u'\in l$ is the first place where $\tilde{\Omega}(\gamma)$ jumps. Apply Lemma \ref{66} to a small loop around $u'$ then the theorem follows from the Kontsevich-Soibelman lemma \cite[Theorem 6]{KS1} and the induction hypothesis. 
	\end{proof}
	
	\begin{rmk}
		One can generalize the definition of initial scattering diagram to the case when there are singular fibres of the type  $II, III, IV$ according to the local open Gromov-Witten invariants computed in \cite{L12} and the tropical/holomorphic correspondence still holds. 
	\end{rmk}      
	\begin{rmk}
		From Perrson's list of singular configurations of rational elliptic surfaces \cite{P7}, a rational elliptic surface cannot contain one of a $I_n^*$, $IV^*$, $III^*$, $II^*$ together with an $I_n$ singular fibre for $n\geq 5$. In particular, the tropical/holomorphic correspondence holds when $Y$ is a del Pezzo surface of degree $\geq 5$. 
	\end{rmk}

	\section{Superpotential Functions and Tropical/Holomorphic Correspondence}

	\subsection{Definition of the Floer theoretic Superpotential}
	Before studying the superpotential of the del Pezzo surface $Y$, we will first construct a sequence of K\"ahler forms which are compatible with the special Lagrangian fibration constructed in \cite{CJL}. 
	
	\begin{rmk}
		This can be viewed as the mathematical realization of the Hori-Vafa renormalization. We will refer the readers to in \cite[Conjecture 4.4]{A} and the discussion below. 
	\end{rmk}

	First observe that $L_u$ are special Lagrangians with respect to $\omega_{TY}$ and $\Omega$. In particular, $L_u$ are totally real with respect to the standard complex structure $J$ and the compactness theorem holds \cite{FZ}.   
	From Lemma \ref{6}, the torus $L_u$ is Lagrangian with respect to $\omega_i$ for $i\gg 1$. Therefore, it make sense to consider the superpotential of $L_u$. 
	Given a relative class $\beta\in H_2(X,L_u)$ of Maslov index two. Assume that $L_u$ does not bound any Maslov index zero holomorphic discs, then the moduli space $\mathcal{M}_{1,\beta}(X,L_u)$ is compact without boundary. Define the open Gromov-Witten invariant of class $\beta$ by 
	\begin{align*}
		n_{\beta}(u):=m_{0,\beta},
	\end{align*} where $m_{0,\beta}$ is part of the $A_{\infty}$-operators of the $A_{\infty}$-structure on $H^*(L_u,\Lambda)$. 
	Intuitively, it counts the number of Maslov index two discs with boundary on $L_u$ and in relative homology class $\beta$ passing through a generic point on $L_u$. Intuitively, $n_{\beta}$ counts the number of holomorphic discs of class $\beta$ with the boundary passing through a generic point in $L_u$.
	Notice that the moduli space $\mathcal{M}_{1,\beta}(L_u)$ is independent of the choice of the symplectic form $\omega_i$ and thus so is the open Gromov-Witten invariants $n_{\beta}(u)$. On the other hand, fix a symplectic form $\omega$ such that $L_u$ is a Lagrangian and let $\{m_k\}_{k\geq 0}$ be the $A_{\infty}$ structure on $H^*(L_u,\Lambda)$ constructed by Fukaya-Oh-Ohta-Ono \cite{FOOO}. Take $b=x_1e_1+x_2e_2\in H^1(L_u,\Lambda_+)$ with $e_i$ basis in $H^1(L_u,\mathbb{Z})$. Recall that $L_u$ bounds no Maslov index zero holomorphic discs for generic $u\in B_0$. In this case, one has
	\begin{align}\label{m0}
		m(e^b):&=\sum_{k\geq 0}m_k(b,\cdots, b) \notag\\
		&=\sum_{\beta:\mu(\beta)=2}\exp{\langle \partial\beta,b\rangle} m_{0,\beta}T^{\omega(\beta)} \notag \\
		&=\sum_{\beta:\mu(\beta)=2} m_{0,\beta}z_1^{\langle \partial  \beta,e_1\rangle}z_2^{\langle \partial \beta,e_2\rangle} T^{\omega(\beta)} \notag \\
		&=\sum_{\beta:\mu(\beta)=2} n_{\beta}(u)T^{\omega(\beta)}z^{\partial\beta}.
	\end{align} Here the second equality is from the divisor axiom (see \cite[Lemma 13.2]{F1}).
	The compactness theorem guarantees that the right hand side of (\ref{m0}) is well-defined as a convergent in $T$-adic sense. 
	Recall that $m(e^b)$ is a multiple of $1_{L_u}$ by the degree reason, where $1_{L_u}$ is the unit of the $A_{\infty}$-algebra $H^*(L_u,\Lambda)$. Thus,
	we will define the superpotential $W(u)\in  \Lambda_+[[H_1(L_u,\mathbb{Z})]]$ by 
	\begin{align}
		m(e^b)=W(u)1_{L_u}.
	\end{align} 
	to emphasize its dependence on $u\in B_0$. 
	
	Given $u\in B_0$ and assume that $L_{u'}$ Lagrangian with respect to $\omega$ for $u'\in U$, where $U$ is a small enough contractible neighborhood of $u$. It is natural to compare $n_{\beta}(u)$ and $n_{\beta}(u')$. We may assume that at least one of $L_u$ or $L_{u'}$ bound holomorphic discs in the relative class $\beta$, otherwise $n_{\beta}(u)=n_{\beta}(u')=0$. Then take $\lambda>\omega(\beta)>0$ over $U$. Now fixed $u'\in U$ and a path $\phi$ from $u$ to $u'$ contained in $U$ and apply the Fukaya's trick and we have an $A_{\infty}$-homomorphism $F_{(\phi,u)}$. Then 
	\begin{align} \label{111}
		m(e^b)\equiv m(e^{F_{(\phi,u)}(b)})   (\mbox{ mod }T^{\lambda}), 
	\end{align} for any Maurer-Cartan element $b\in H^1(L_u,\Lambda_+)$ and $\lambda>0$ by \cite[Lemma 4.2.23]{FOOO}.
	There are only finitely many relative classes of holomorphic discs with boundaries on $L_{u'}$, $u'\in U$. In particular, $\phi$ intersects finitely many $l_{\gamma}$s. It suffices to understand the case when $\phi$ intersects exactly one $l_{\gamma}$ transversally and  (\ref{order}) holds. Then from (\ref{111}), we have  	
	\begin{align*}
		W(u')1_{L_u}&=m(e^{F_{(\phi,u)}(b)})\\
		&=\sum_{\beta:\mu(\beta)=2} n_{\beta}(u) \exp{\langle \partial \beta, F_{(\phi,u)}(b) \rangle } T^{\omega(\beta)}\\ 
		&=\sum_{\beta:\mu(\beta)=2} n_{\beta}(u) \exp{\langle \partial \beta, F_{(\phi,u)}(b)_1e_1+F_{\phi,u}(b)_2e_2 \rangle } T^{\omega(\beta)}\\ 
		&=\sum_{\beta:\mu(\beta)=2} n_{\beta}(u) \exp{\big(F_{(\phi,u)}(b)_1\big)}^{\langle \partial \beta, e_1\rangle}\exp{\big(F_{(\phi,u)}(b)_2\big)}^{\langle \partial \beta, e_2\rangle}  T^{\omega(\beta)}\\   
		&=\sum_{\beta:\mu(\beta)=2} n_{\beta}(u) F_{\phi,u}(z_1)^{\langle \partial\beta,e_1\rangle} F_{\phi,u}(z_2)^{\langle \partial\beta,e_2\rangle}  T^{\omega(\beta)} \\  
		&=\sum_{\beta:\mu(\beta)=2} n_{\beta}(u)F_{(\phi,u)}(z_1^{\langle \beta,e_1\rangle }z_2^{\langle \partial \beta,e_2\rangle}) T^{\omega(\beta)} \\ &=\sum_{\beta:\mu(\beta)=2}n_{\beta}(u)F_{(\phi,u)}(z^{\partial\beta})T^{\omega(\beta)}	= F_{(\phi,u)} W(u) 1_{L_u}.          
	\end{align*} Here the second equality is coming from the similar calculation in \eqref{m0}, the fifth equality comes the definition of $F_{(\phi,u)}$, the sixth equality comes from the fact that $F_{(\phi,u)}$ is a homomorphism and the last equality comes from the definition of $z^{\partial \beta}$.
	In particular, we end up with the wall-crossing formula of the superpotential with \eqref{2016}:
	\begin{lem} \label{WCF}
		$W(u')=K_{\gamma}W(u)$.
	\end{lem}
	
	In general, the torus fibre $L_u$ may bound holomorphic discs of Maslov index zero and the associate $A_{\infty}$ structure is only defined up to pseudo-isotopy \cite{F2}. Assume that $\{m_k'\}_{k\geq 0}$ is another $A_{\infty}$ structure due to different choices in the construction and $\{f_k\}_{k\geq 0}$ is the induced $A_{\infty}$-homomorphism. Let $W'(u)=m'(e^b)$ and one has $W(u)=KW'(u)$, where $K$ is a product of transformations of the form (\ref{2016}) and $K_{\gamma}$ appears in the product only if $L_u$ bounds a holomorphic disc of relative $\gamma$. Notice that the coefficient of $T^{\omega(\beta)}z^{\partial\beta}$ in $W(u),W'(u)$ is the same unless $L_u$ bound holomorphic discs $\beta', \gamma_i$, where $\beta$ is of Maslov index two and $\gamma_i$ of Maslov index zero such that $\beta=\beta'+\sum_i\gamma_i$. To sum up, $n_{\beta}(u)$ is the coefficient of $T^{\omega(\beta)}z^{\partial\beta}$ in $W(u)$ if $\mathcal{M}_{1,\beta}(L_u)$ admits no real codimension one boundary.

	The following lemma is the symplectic analogue of Lemma \ref{affine distance}.
	\begin{lem} \label{94}
		Given $C_1>0$, there exists a compact set $K$ such that if $L_u$ is a special Lagrangian torus in Theorem \ref{92} contained in $X\backslash K$ and bound a holomorphic discs of Maslov index zero in relative class $\gamma\in H_2(X,L)$ with $\tilde{\Omega}(\gamma;u)\neq 0 $ then $\omega_{TY}(\gamma)>C_1$. 
	\end{lem}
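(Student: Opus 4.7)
The plan is to translate the statement to the tropical side via the correspondence theorem (Theorem \ref{116}) and invoke the affine distance estimate in Lemma \ref{affine distance}. Under the hypothesis $\tilde{\Omega}(\gamma; u) \neq 0$, Theorem \ref{116} produces a tropical disc $(h, T, w)$ with stop at $u$ and $[h] = \gamma$. By Definition \ref{15}(3), every non-stop monovalent vertex of $T$ maps to the discriminant locus $\Delta \subset B$, which under the genericity assumption of Lemma \ref{generic} is a finite set lying in a compact region of $B$.

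I fix a compact set $K_0 \subset X$ whose image $\pi(K_0)$ contains an open neighborhood $V_0$ of $\Delta$. For any compact $K \supset K_0$ and any $L_u \subset X \setminus K$, the connectivity of $T$ forces the image $h(T)$ to contain at least one non-contracted affine segment that traverses from $\overline{V_0}$ out to $\pi(L_u) \subset \pi(X \setminus K)$. By Lemma \ref{affine distance}, whose proof actually yields a lower bound proportional to $\mathrm{dist}(A, B)$, such a segment has $\omega_{TY}$-area bounded below by a constant $C(K)$ with $C(K) \to \infty$ as $K$ exhausts $X$. The bound $\omega_{TY}(\gamma) \geq C(K)$ then follows from additivity: the symplectic area of a representative of $\gamma$ decomposes as a weighted sum $\sum_e w_e \, \omega_{TY}(\delta_e)$ over the non-contracted edges (where $\delta_e$ is the primitive relative class associated to $e$) plus the non-negative $\omega_{TY}$-areas of the Lefschetz thimble classes attached at the singular fibres. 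Choosing $K$ with $C(K) > C_1$ then completes the proof.

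The main obstacle is making precise the area additivity over the edges of $T$, since the tropical data determines $\gamma$ only as an element of $H_2(X, L_u)/H_2(X)$ and one must exhibit a specific chain representative whose area decomposes as claimed. A clean way is a direct chain-level construction paralleling the definition of $[h]$ after Definition \ref{15}: glue the Lagrangian cylinders swept along each affine segment $h(e)$ (each of area $w_e \omega_{TY}(\delta_e)$ by the very definition of the complex affine structure, using $\check{\omega} = \mbox{Re}\, \Omega$ and hyperK\"ahler rotation) to the Lefschetz discs collapsing the monodromy-invariant cycles at the singularities, all of which contribute non-negative $\omega_{TY}$-area. Alternatively, one may hyperK\"ahler-rotate the Maslov index zero holomorphic disc to a closed holomorphic curve in $\check{X}$ and estimate its $\check{\omega}$-area directly by the affine distance between components of its image in $\check{B}$, in the same spirit as the proof of Lemma \ref{affine distance}.
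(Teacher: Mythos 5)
Your proposal is essentially sound, but it takes a genuinely different and much heavier route than the paper. The paper's entire proof is one line: the lemma ``is a direct consequence of monotonicity of holomorphic curves.'' The underlying point is that a Maslov index zero disc cannot live entirely in the neighborhood of $D$ where the only disc classes are multiples of $\beta_0$ (which have Maslov index two), so it must reach a fixed compact set $K'$; monotonicity then bounds its $\omega_{TY}$-area from below by a quantity growing with $\mathrm{dist}_{g_{TY}}(L_u,K')$, which diverges as $L_u$ escapes every compact set since $g_{TY}$ is complete. This argument is elementary, needs no genericity of $u$, and uses no tropical input. Your route instead invokes the full correspondence theorem (Theorem \ref{116}) to replace the holomorphic disc by a tropical one rooted in $\Delta$, and then estimates area edge by edge. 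The edge-additivity you worry about is fine --- it is exactly the mechanism of the split attractor flow in Lemma \ref{split attractor flow}, where $\int_\gamma\omega_{TY}$ is monotone along each ray and splits as $\sum_i\int_{\gamma_i}\omega_{TY}$ at each vertex, with every summand positive --- and since $\omega_{TY}$ is exact on $X$ the area only depends on $\gamma$ mod $H_2(X)$, so the chain-level ambiguity you flag is harmless. The one step you should not leave to Lemma \ref{affine distance} as stated is the claim $C(K)\to\infty$: that lemma concerns two \emph{fixed} disjoint compact sets, and its proof is chart-local with constants depending on $\tau$ and $\Vert\check\Omega\Vert$ over the chart, which degenerate near infinity. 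To get divergence you must compute in the model near $D$, where $\check\Omega=-\tfrac{du}{u}\wedge dx$ gives $\int_\gamma\check\Omega\sim m\log u+\tfrac{nd}{4\pi i}(\log u)^2\to\infty$ as $u\to 0$ (this is the content of Lemma \ref{BPS ray behavior} and the estimates in Lemma \ref{well-defined}). With that supplied, your argument closes, but it proves the statement only on the tropical side of the correspondence; the paper's monotonicity argument is both more elementary and logically lighter, and is in fact one of the inputs (via Lemma \ref{split attractor flow}) feeding into the correspondence theorem you are quoting.
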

	\begin{proof}
		This is a direct consequence of monotonicity of holomorphic curves \cite{CJL}\cite{G7}. Notice that the geometry is not bounded but only degenerate mildly. 
	\end{proof}
	Fix a K\"ahler form $\omega_Y$ on $Y$ (with respect to the standard complex structure $J$). Let $\mathcal{U}_i=\{|w|\leq \epsilon_i\}^c$ near $D$ in Lemma \ref{6} and $\epsilon_i\rightarrow 0$ as $i\rightarrow \infty$. 
	\begin{lem} \label{95} Fix a symplectic form $\omega_Y$ on $Y$. Let $J'$ be an almost complex structure tamed uniformly by $\omega_Y$ and $\omega_i$ for $i\gg 1$ with 
		\begin{align*}
			\mathbf{C}_1^{-1} |X|^2_{g_Y}\geq \omega_Y(X,J'X) \geq \mathbf{C}_1|X|^2_{g_Y}  \\
			\mathbf{C}_2^{-1} |X|^2_{g_i}\geq {\omega_i}(X,J'X) \geq \mathbf{C}_2|X|^2_{g_i},
		\end{align*}
		where $g_i$ and $g_Y$ is the Riemannian metric associate to $\omega_i$ and $\omega_Y$. Let $u\in \mathcal{U}_i$. 
		There exist a compact subsets $K\subseteq X$ and  constants $C,C'>0$ both independent of $i$ such that if $L_u\subseteq  X\backslash K$ and $L_u$ bounds a stable $J'$-holomorphic disc then
		\begin{enumerate}
			\item if the relative class is $\beta_0$, then the symplectic area with respect to $\omega_Y$ is bounded above by $C\epsilon_i$.
			\item There exists $C'>0$ such that if the symplectic area (with respect to $\omega_Y$)  $\leq C'$ and the Maslov index is $0$ or $2$. Then its relative class is a multiple of $\beta_0$.
			In other words, all $J'$-holomorphic discs (of Maslov index $2$) with boundary on $L_u$ of class other than a multiple of $\beta_0$ have symplectic area $\geq C'$. 
		\end{enumerate}

	\end{lem}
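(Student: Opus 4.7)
The plan is to establish the two claims separately, both exploiting the Calabi-model asymptotics near $D$ reviewed in Section \ref{91}, and using Gromov's monotonicity lemma for claim (2).

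For claim (1), recall that in the Calabi model near $D$ the special Lagrangian torus $L_u$ is modelled by an $S^1$-bundle of constant radius $\epsilon_i$ (in the normal coordinate $w$ defining $D = \{w = 0\}$) over a special Lagrangian circle in $D$, and the relative class $\beta_0$ is represented by a single fibre disc $\{|w| \leq \epsilon_i\}$ in a fibre of $N_{D/Y}$. Since $\omega_Y$ is a fixed smooth K\"ahler form on $Y$ (in particular, smooth across $D$), its restriction to such a fibre disc is comparable to $\tfrac{i}{2}dw \wedge d\bar{w}$ near $D$ up to a bounded factor depending only on $\omega_Y$, so the model fibre disc has $\omega_Y$-area at most $C_0 \epsilon_i^2$. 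Any stable $J'$-holomorphic disc bounded by the actual $L_u$ in class $\beta_0$ has topological intersection number $1$ with $D$ (since $\mu(\beta_0) = 2$ and $D \in |-K_Y|$); by Gromov compactness together with the uniqueness statement in Theorem \ref{115}, for $K$ sufficiently large the disc must lie $C^0$-close to a model fibre disc, and its $\omega_Y$-area differs from the model area by $O(\epsilon_i^2)$, which is bounded by $C\epsilon_i$ for $\epsilon_i \ll 1$.

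For claim (2), I would fix once and for all a tubular neighborhood $N(D) \subset Y$ of $D$ such that $N(D) \cap X \subset \mathcal{U}_i$ for all $i \gg 1$ (possible since $\mathcal{U}_i \nearrow X$), and set $K = X \setminus N(D)$. The key topological input is that the annular neighborhood $N(D) \cap X$ deformation retracts onto the unit circle bundle of $N_{D/Y}$, so $H_2(N(D) \cap X, L_u)$ modulo $H_2(N(D) \cap X)$ is generated by the fibre-disc class $\beta_0$. Consequently any $J'$-holomorphic disc bounded by $L_u$ whose class is not a multiple of $\beta_0$ cannot be contained in $N(D) \cap X$, hence its image must reach a point $p \in K$. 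Gromov's monotonicity lemma applied to $J'$-holomorphic curves uniformly tamed by $\omega_Y$ (with the taming constants $\mathbf{C}_1$ independent of $i$ by hypothesis) then yields that the $\omega_Y$-area inside a ball $B_{\omega_Y}(p, r_0)$ is at least $C' r_0^2$, where $r_0$ is fixed smaller than the $\omega_Y$-distance from $K$ to a slight shrinking of $K$; since $p$ stays in a fixed compact set, $C'$ is uniform in $i$.

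The main obstacle is the uniformity in $i$. For claim (1) this requires that the $C^0$-closeness of the actual disc to a model fibre disc be controlled uniformly as $L_u$ varies with $i$; this follows from the Calabi-model asymptotics of the Tian--Yau metric together with the uniqueness in Theorem \ref{115}, applied in the shrinking neighborhood of $D$ that $L_u$ sweeps out. For claim (2), the uniformity is built into the hypothesis that $J'$ is tamed by $\omega_Y$ with constants $\mathbf{C}_1$ independent of $i$, so the monotonicity constant depends only on $\omega_Y$, $J'$, and $\mathbf{C}_1$. The most delicate conceptual point is the topological classification of disc classes contained in $N(D) \cap X$, which relies on the description of this region as a punctured disc bundle over $D$ and a direct computation of its relative homology with respect to the $S^1$-bundle $L_u$.
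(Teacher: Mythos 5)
Your proof of part (1) does not go through. You derive the area bound by asserting that the stable $J'$-holomorphic disc in class $\beta_0$ must be $C^0$-close to a model fibre disc, ``by Gromov compactness together with the uniqueness statement in Theorem \ref{115}''. This is circular: Theorem \ref{115} is proved in the paper \emph{using} Lemma \ref{95}. Even setting the circularity aside, the closeness claim is not available: $J'$ is only assumed tamed (it need not be close to the integrable structure), the disc may be a stable map with bubble components, and Gromov compactness requires an a priori energy bound --- which is precisely what part (1) is supposed to supply. The paper's argument is elementary and avoids compactness entirely: since $L_u$ is Lagrangian for $\omega_i$, the pairing $\int_{\beta_0}\omega_i$ is topological and can be evaluated on the explicit fibre-disc representative inside $\{|w|\le\epsilon_i\}$, giving $|\int_{\beta_0}\omega_i|\le\mathfrak{C}\,\epsilon_i^{-20}\epsilon_i$; on the other hand the taming hypothesis together with Lemma \ref{bound} (namely $g_i\ge C_2\epsilon_i^{-20}g_Y$ on $\mathcal{U}_i$) gives $\int_{\beta_0}\omega_i\ge \mathbf{C}_2C_2\mathbf{C}_1^{-1}\epsilon_i^{-20}\int_{D^2}f^*\omega_Y$ for the actual disc. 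Combining the two inequalities yields $\int_{D^2}f^*\omega_Y\le C\epsilon_i$ with constants independent of $i$.

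Part (2) has a gap in the topological step. A Maslov index two class satisfies $\beta\cdot D=1$, so such a disc necessarily meets $D$ and is never contained in $N(D)\cap X$; your classification of classes in the punctured disc bundle therefore does not apply to the discs the statement is mainly about. Moreover the classification itself is not correct: $\beta_0$ is the fibre disc, which intersects $D$, so it is not a class of $H_2(N(D)\cap X,L_u)$ at all; the relative classes supported in the punctured neighborhood are generated (mod $H_2$) by a class of the form $d\beta_0-D$, since the fibre circle has order $d$ in $H_1$ of the circle bundle over $D$. The correct ambient group is $H_2(Y\setminus K',L_u)\cong\mathbb{Z}\langle\beta_0,D\rangle$, and one must rule out stable discs in classes $n\beta_0+mD$ with $m\neq 0$ (e.g.\ configurations with sphere components covering $D$). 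The paper does this by an area computation: positivity of $m(\int_D\omega_i-2d\int_{\beta_0}\omega_i)$ forces $m>0$ once $\epsilon_i$ is small, and then the $\omega_Y$-area is at least $\tfrac12\int_D\omega_Y$; this is where the constant $C'$ actually comes from. Your monotonicity argument for discs whose image reaches the fixed compact set $K$ is sound and matches the paper, but without the $n\beta_0+mD$ case the proof of (2) is incomplete.
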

	\begin{proof}
		Now assume that $L_u\in \mathcal{U}_i$, $i\gg 1$. Let $f:(D^2,\partial D^2)\rightarrow (X,L_u)$ be a $J'$-holomorphic disc of class $\beta_0$. 
		\begin{align} \label{estimate}
			\int_{\beta_0}\omega_i=\int_{D^2}f^*\omega_i\geq \mathbf{C}_2\int_{D^2}|df|^2_{g_i} d\mbox{vol}&\geq \mathbf{C}_2C_2 \epsilon_i^{-20} \int_{D^2}|df|^2_{g_Y} d\mbox{vol}\notag \\ &=\mathbf{C}_2C_2 \mathbf{C}_1^{-1}\epsilon_i^{-20} \int_{D^2}f^*\omega_Y.
		\end{align} The first equality is because that $L_u$ is a Lagrangian with respect to $\omega_i$. The first inequality is because that $J'$ is tamed and the second inequality follows from Lemma \ref{bound}. Since $L_u$ is Lagrangian, the first term in (\ref{estimate}) can be computed via any representative, say $\{|w|\leq \epsilon<\epsilon_i\}$ such that (\ref{bound'}) holds. Straight-forward calculation shows that 
		\begin{align} \label{estimate'}
			|\int_{\beta_0}\omega_i|\leq 
			\mathfrak{C}\epsilon_i^{-20}\epsilon_i,
		\end{align} for some $\mathfrak{C}$. Then combining (\ref{estimate})(\ref{estimate'}) together, we have the symplectic area of $f$ with respect to $\omega_Y$ is bounded by $C\epsilon_i$, where $C=\mathfrak{C}\mathbf{C}_1\mathbf{C}_2^{-1}C_2^{-1}>0$ independent of $i$. This proves the first part of the lemma. Choose the compact set $K'\subset X$ such that $Y\backslash K'$ is diffeomorphic to $Y_{\mathcal{C}}$. Then $H_2(Y\backslash K',L_u)\cong \mathbb{Z}\langle\beta_0, D\rangle$ and thus any holomorphic disc of class other than $n\beta_0+mD$ with boundary on $L_u$ would intersect $K'$. From the monotonicity of holomorphic discs (say \cite[Proposition 4.4.1]{S6}), the symplectic area with respect to $\omega_Y$ of a holomorphic disc of class other than $n\beta_0+mD$ will be bounded below by some constant $C''$.
		Now assume that the holomorphic disc is of relative class $n\beta_0+mD$ and is contained in $Y\backslash K'$. First we assume that it is of Maslov index zero, then 
		$n=-2md$. Then the symplectic area with respect to $\omega_i$ is given by $-2md\int_{\beta_0}\omega_i+m\int_D\omega_i= m(\int_D\omega_i-2d\int_{\beta_0}\omega_i)>0$. From (\ref{estimate'}), $m>0$ if $i$ is chosen large enough such that $\epsilon_i< \int_D\omega_Y/2d\mathfrak{C}$ from (\ref{estimate'}). Moreover, we have that $\int_{D^2}f^*\omega_Y> \frac{1}{2}\int_{D}\omega_Y$ if $\epsilon_i<\int_D\omega_Y/4d\mathfrak{C}$. 
		The case of Maslov index zero is similar.   
		Then one can take $C'=\mbox{max}\{\frac{1}{2}\int_D\omega_Y, C''\}$ and  $K=\bar{U}_i$, $i$ large enough such that $\epsilon_i<\min\{C'/C,\int_{D}/4d\mathfrak{C}\}$. This finishes the proof of the lemma. 
	\end{proof}
	
	The following is part of \cite[Conjecture 7.3]{A}. It is expected to be achieved via the technique of neck-stretching while here we give a rather elementary proof. 
	\begin{thm} \label{115}
		There exists $I>0$ such that $n_{\beta_0}(u)=1$ for $L_u\in \mathcal{U}_i^c$, $i>I$. 
	\end{thm}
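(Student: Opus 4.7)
The plan is to localize every $J$-holomorphic disc in class $\beta_0$ with boundary on $L_u$ into a thin tubular neighborhood of $D$, compute the count explicitly in the Calabi model living there, and transfer the result back by a parametric cobordism argument. For the localization, Lemma \ref{95}(1) bounds the $\omega_Y$-area of any such disc by $C\epsilon_i$; combined with the monotonicity inequality for $J$-holomorphic curves in the fixed metric $g_Y$ on $Y$, this forces the image of the disc to lie within $g_Y$-distance $O(\sqrt{\epsilon_i})$ of $D$. For $i$ sufficiently large this neighborhood is identified, via the exponential map along $D$, with an open subset of the Calabi model $Y_{\mathcal{C}}\subset N_{D/Y}$.

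In the Calabi model with its natural complex structure $J_{\mathcal{C}}$, the special Lagrangian $L_u^{\mathcal{C}}$ is the constant-norm $S^1$-bundle over a special Lagrangian $l\subset D$, as in Section \ref{91}. A $J_{\mathcal{C}}$-holomorphic disc in class $\beta_0$ has algebraic intersection $1$ with $D$, hence meets the zero section transversally at a single point $p$; its composition with the bundle projection $N_{D/Y}\to D$ is then a holomorphic disc into $D$ with totally real boundary on $l$, and is therefore constant. Thus the disc is a fiber disc $\{v\in N_{D/Y}|_p : |v|_h\leq c(u)\}$, uniquely determined by $p\in l$, and these are regular by the standard Cho--Oh calculation. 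With one boundary marked point the evaluation map to $L_u^{\mathcal{C}}$ is a diffeomorphism, so the model count is $n_{\beta_0}^{\mathcal{C}}(u)=1$.

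To transfer the count to $(X,J)$, interpolate $J_t=(1-t)J_{\mathcal{C}}+tJ$ on the tubular neighborhood. Both parts of Lemma \ref{95} apply uniformly in $t$ for $i$ large, so every class-$\beta_0$ $J_t$-disc remains in the neighborhood. Disc bubbling is excluded: any bubble splitting $\beta_0 = \beta' + \gamma$ with $\gamma$ of Maslov index $0$ would force $\gamma$ to be a class other than a multiple of $\beta_0$ (the only Maslov $0$ multiple of $\beta_0$ being trivial), and Lemma \ref{95}(2) then gives $\omega_Y$-area $\geq C' \gg O(\epsilon_i)$, a contradiction; sphere bubbles are ruled out because $X$ contains no compact rational curves for a del Pezzo $Y$. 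The parametric moduli space $\bigsqcup_t \mathcal{M}_{1,\beta_0}(X,L_u,J_t)$ is then a compact oriented $1$-dimensional cobordism between the model and actual moduli spaces, and the signed evaluation count through a generic point of $L_u$ is preserved, yielding $n_{\beta_0}(u)=1$.

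The main obstacle is establishing genuine regularity and compactness of the parametric moduli space: one must show the linearized Cauchy--Riemann operator is surjective at every $J_t$-disc (which should be inherited from Cho--Oh regularity of the model discs by $C^0$-closeness for $i$ large, but requires verification uniform in $t$) and that all discs in the cobordism remain in the chart where $J_t$ is defined. Both of these ultimately hinge on the sharp area estimate from Lemma \ref{95} being uniform in $t$, which is why the theorem only holds for $i$ sufficiently large.
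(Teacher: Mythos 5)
Your proposal is correct and follows essentially the same route as the paper: use the area estimates of Lemma \ref{95} to confine all class-$\beta_0$ discs to a neighborhood of $D$ where the geometry is modeled on the Calabi ansatz, verify that the model fibre disc is the unique (regular) representative there, and then interpolate between $J_{\mathcal{C}}$ and $J$ while excluding wall-crossing/bubbling via the dichotomy of Lemma \ref{95} (a Maslov-index-zero piece cannot be a nonzero multiple of $\beta_0$, hence costs area $\geq C'$, contradicting the $O(\epsilon_i)$ total bound). The only cosmetic differences are that you compute the model count by projecting to $D$ and invoking Cho--Oh regularity where the paper uses the maximum principle and cites regularity from the literature, and that you make the sphere-bubbling exclusion and the parametric regularity concerns explicit.
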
 
	\begin{proof} 
		From \cite[Theorem 3.3]{HSVZ}, the complex structure $J$ and $J_{\mathcal{C}}$ are asymptotic to each other and thus coincide on $D$. Therefore, there exists a $1$-parameter family of almost complex structures $J_t$, $t\in [0,1]$ on $Y$ such that 
		\begin{enumerate}
			\item $J_t$ tamed with $\omega_Y$, for all $t$.
			\item $J_t=J$ the standard complex structure on the compact set $K$ (in Lemma \ref{95}).
			\item  $J_0=J$ and $J_1=J_{\mathcal{C}}$ on $\mathcal{U}_i^c$ for $i\gg 1$. 
		\end{enumerate}
		From Lemma \ref{95}, any $J_1$-holomorphic discs in class $\beta_0$ has small symplectic area if the boundary condition is close enough to $D$. From the monotonicity, one can assume that all such discs falls in $\mathcal{U}_i^c$ and can compute $n_{\beta_0}(u)$ with respect to $J_1$ simply via the Calabi model. 
		By maximal principle, the model special Lagrangian (see Section \ref{91}) bound exactly one simple holomorphic disc, which is the disc along the fibre $X_{\mathcal{C}}\rightarrow D$ with boundary on the unit norm circle along the fibre. This holomorphic disc is regular by \cite[Theorem 2]{HLS}. Thus, $n_{\beta_0}(u)=1$ if computed with respect to $J_{\mathcal{C}}$.       	
	
		Next we want to understand how $n_{\beta_0}(u)$ varies with respect to the $J_t$. Assume that it is not constant with respect to $t$, there exists $J_{t_0}$-holomorphic discs $f_i:(\Sigma_i,\partial \Sigma_i)\rightarrow (X,L_u)$, $i=1,2$ for the same $t_0$ of relative class $\beta',\alpha\in H_2(Y,L_u)$ of Maslov index two and zero such that $\beta_0=\beta_1+\alpha$. This leads to a contradiction since 
		\begin{align*}
			C\epsilon_i> \int_{\Sigma_1}f_1^*\omega_Y+\int_{\Sigma_2}f_2^*\omega_Y> C_2+ 0\geq C\epsilon_i.
		\end{align*}
		Here the first inequality follows from the first part of  
		the Lemma \ref{95} the second inequality follows from the second part of the Lemma \ref{95}.
		
	\end{proof}

	Now we are ready to prove the tropical/holomorphic correspondence of superpotentials. Similar results of counting holomorphic cylinders in rigid analytic geometry was established by Yue Yu \cite{Y4}. 
	\begin{thm} \label{163}
		Let $u\in B_0$ generic and $\beta\in H_2(X,L_u)$ with $\beta\cdot D=1$, then   
		\begin{align*}
			n_{\beta}(u)=n^{trop}_{\beta}(u).
		\end{align*}
	\end{thm}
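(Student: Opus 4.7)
The plan is to reduce the identity to a base case near infinity and then propagate it across the walls of the scattering diagram, matching the wall-crossing formulas on both sides. Throughout, I work modulo $T^{\lambda}$ for arbitrary but fixed $\lambda>0$; by Lemma \ref{well-defined} and the finiteness of rays in $\mathfrak{D}$ with area below $\lambda$, only finitely many objects (broken lines, walls, Maslov index two classes) are relevant at this truncation.

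\textbf{Base case near infinity.} Choose $u_{\infty}$ close enough to infinity that Lemma \ref{MI2 asym} applies. On the tropical side, Lemma \ref{initial} gives $n^{trop}_{\beta_{0}}(u_{\infty})=1$, and for any other Maslov index two class $\beta$ (i.e.\ $\beta\cdot D=1$, $\beta\neq\beta_{0}$) a representing broken line must have at least one bending point; Lemma \ref{MI2 asym} then forces $u_{\infty}$ to lie deep in the nested neighborhood of infinity, contradicting the initial choice, so $n^{trop}_{\beta}(u_{\infty})=0$. On the Floer side, Theorem \ref{115} gives $n_{\beta_{0}}(u_{\infty})=1$, while for $\beta\neq\beta_{0}$ the area dichotomy of Lemma \ref{95} together with the absence of Maslov index zero discs near infinity (Lemma \ref{94}) rules out such a disc. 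Thus $W(u_{\infty})\equiv W^{trop}(u_{\infty})\pmod{T^{\lambda}}$.

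\textbf{Propagation by wall-crossing.} Fix a generic $u\in B_{0}$ and choose a path $\phi$ from $u_{\infty}$ to $u$ meeting $\mathrm{Supp}(\mathfrak{D})$ transversely at finitely many points. Pick $i\gg 1$ so that $L_{\phi(t)}$ is Lagrangian with respect to $\omega_{i}$ for every $t$, and so that every Maslov index zero disc and every Maslov index two disc of area less than $\lambda$ lies inside $\mathcal{U}_{i}$, where $\omega_{i}=\omega_{TY}$; this matches the symplectic areas used on the two sides. Across each wall $l_{\mathfrak{d}}$, the tropical superpotential transforms as $W^{trop}(u_{-})=\mathcal{K}_{\mathfrak{d}}\,W^{trop}(u_{+})$ with the operator (\ref{2016'}) built from the slab function $f_{\mathfrak{d}}$. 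Applying Fukaya's trick on a short arc of $\phi$ crossing the wall and repeating (\ref{111})--(\ref{WCF}), the Floer-theoretic side transforms as $W(u_{-})=\mathcal{K}_{\gamma}\,W(u_{+})$ with the same shape (\ref{2016}) and with $f_{\gamma}$ produced by the open Gromov--Witten invariants $\tilde{\Omega}(\cdot;u)$. The correspondence theorem for Maslov index zero discs (Theorem \ref{116}) identifies $\tilde{\Omega}=\tilde{\Omega}^{trop}$, hence $f_{\gamma}=f_{\mathfrak{d}}$ and the two wall-crossing operators coincide. Induction on the number of wall crossings along $\phi$ yields $W(u)\equiv W^{trop}(u)\pmod{T^{\lambda}}$, and letting $\lambda\to\infty$ gives coefficient-wise equality $n_{\beta}(u)=n^{trop}_{\beta}(u)$ for every $\beta$ with $\beta\cdot D=1$.

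\textbf{Main obstacle.} The delicate point is the base case: excluding Maslov index two holomorphic discs of class $\beta\neq\beta_{0}$ for $u$ near infinity. A priori such a disc could acquire small area from components straying into the interior of $X$. The argument should combine the area dichotomy of Lemma \ref{95}, the interpolation of almost complex structures $J_{t}$ from the proof of Theorem \ref{115}, and the monotonicity bound controlling bubbling of $J_{t}$-holomorphic discs, in order to show that any such disc would have to split off a Maslov index zero disc of relative class $\alpha$ with $\alpha\cdot D=0$, which is forbidden near infinity by Lemma \ref{94}. A secondary technical care is needed in matching the sign conventions $\epsilon_{i}=\mathrm{sgn}\langle\phi'(p_{i}),\gamma_{\mathfrak{d}}\rangle$ in (\ref{1011}) with the chamber ordering (\ref{order}) used on the Floer side, so that the tropical and Floer wall-crossing operators are truly identical rather than merely conjugate.
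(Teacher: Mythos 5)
Your overall architecture coincides with the paper's: anchor the identity at infinity via Theorem \ref{115}, then propagate it by matching the tropical and Floer-theoretic wall-crossing operators, with Theorem \ref{116} supplying the identification $f_{\gamma}=f_{\mathfrak{d}}$ of slab functions. The only organizational difference is that the paper inducts on $n(\beta;u)$, the maximal length of a broken line representing $\beta$, peeling off the last vertex of a Maslov index two tropical disc, whereas you induct on the number of wall crossings of a path from $u_{\infty}$ to $u$ and transport the whole superpotential modulo $T^{\lambda}$; these are interchangeable once the wall-crossing formulas are matched.

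The one step that does not work as written is your tropical base case. You claim that for $\beta\neq\beta_{0}$ with $\beta\cdot D=1$ a representing broken line must bend and that Lemma \ref{MI2 asym} then yields a contradiction. It does not: Lemma \ref{MI2 asym} says a bending inside $U_{\infty}^{n}$ forces the \emph{end} of the broken line into $U_{\infty}^{n}$, which is perfectly consistent with the end being $u_{\infty}$. Since the walls of $\mathfrak{D}$ spiral into infinity (Lemma \ref{BPS ray behavior}), every punctured neighborhood of infinity meets walls, and broken lines of class $\beta_{0}+\gamma_{\mathfrak{d}}$ that bend once near infinity and end at $u_{\infty}$ do exist; so $n^{trop}_{\beta}(u_{\infty})$ is not literally zero for all $\beta\neq\beta_{0}$. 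What rescues the base case is the area bound rather than the monotonicity of $|u|$: by Lemma \ref{94} (together with Lemma \ref{affine distance}), any Maslov index zero class $\gamma$ with $\tilde{\Omega}(\gamma;u_{\infty})\neq 0$ has $\omega_{TY}(\gamma)>\lambda$ once $u_{\infty}$ lies outside a compact set depending on $\lambda$, so every class $\beta=\beta_{0}+\sum_{i}\gamma_{i}\neq\beta_{0}$ contributes only beyond the truncation $T^{\lambda}$ on both the tropical and the Floer side. With that substitution your argument closes; the remaining delicacies you flag on the Floer side (the $J_{t}$-interpolation and the area dichotomy of Lemma \ref{95}) are precisely the ones the paper deploys in the proof of Theorem \ref{115}.
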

	\begin{proof}
		For $\beta\in H_2(X,L_u)$, recall that $l(\beta;u)<\infty$  from Lemma \ref{well-defined}. We will prove the theorem by induction on $l(\beta;u)$. For $l(\beta;u)=1$, then  $\beta=\beta_0$ and only one tropical disc contributes to $n^{top}_{\beta_0}(u)=1$. Thus, the theorem follows from Theorem \ref{115}. Assume that the theorem holds for $l(\beta;u)\leq k$. Now let $\beta\in H_2(X,L_u)$ and $n_{\beta}(u)\neq 0$. Let $(h,w,T)$ be a tropical disc of Maslov index two ending at $u$ such that $[h]=\beta$. Let $v$ is the vertex of the unique unbounded edge of $T$ and $T'$ is the complement of the unbounded edge. Then $(h|_{T'},w|_{T'},T')$ is a union of admissible tropical discs ending at $\phi(v)$, say $(h_0,w_0,T_1),(h_1,w_1,T_1)\cdots (h_l,w_l,T_l)$. Exactly one of them has Maslov index two say $h_0$ and rest of them are of Maslov index zero. Since every tropical disc of Maslov index two ending at $h(v)$ can enlongate the edge adjacent to $h(v)$ to $u$ and get a tropical disc of Maslov index two ending at $u$. Thus $\alpha([h_0],h(v))\leq k$ and the induction hypothesis implies that $n_{[h_0]}(\phi(v))=n^{trop}_{[h_0]}(\phi(v))$. Then the theorem follows from Theorem \ref{116} and the fact that the wall-crossing formula for tropical superpotential (see \cite{G9}) and wall-crossing formula for Floer theoretic superpotential (see Lemma \ref{WCF}) coincide.
	\end{proof} 
	
	\begin{rmk}
		For a toric Fano surface, there are mirrors constructed by Carl-Pumperla-Siebert \cite{CPS} and family Floer mirror \cite{F0}\cite{T4}\cite{A2} of the special Lagrangian fibration. Theorem \ref{116}, Theorem \ref{163} lay out the foundation for the equivalence of the mirrors.
	\end{rmk}
	The mirror of $(Y,D)$ constructed in \cite{CPS} is the fibrewise compactification of the Hori-Vafa superpotential. Recall that the Hori-Vafa superpotential $W_{HV}$ coincides with the superpotential of the Lagrangian torus fibres \cite{CO}. This motivates the following conjecture:
	\begin{conj} When $Y$ is a toric Fano surface,
		there exists a special Lagrangian torus fibre $L_u$ Hamiltonian isotopic to a moment torus fibre. In particular, $W(u)=W^{HV}$.
	\end{conj}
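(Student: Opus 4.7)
The plan is to exhibit a specific CJL special Lagrangian fibre that arises as the Hamiltonian image of the monotone moment torus fibre under a Moser-type symplectic deformation. The first step uses Lemma \ref{6}: we produce K\"ahler forms $\omega_i$ on $Y$ with $[\omega_i]=k_ic_1(Y)$ and $\omega_i=\omega_{TY}$ on $\mathcal{U}_i\nearrow X$. Since the toric K\"ahler form $\omega_Y$ also represents a multiple of $c_1(Y)$, the difference $\omega_i-k_i\omega_Y$ is exact on $Y$; by the $\partial\bar{\partial}$-lemma we write $\omega_i-k_i\omega_Y=i\partial\bar{\partial}\psi_i$ for smooth real $\psi_i$, and Moser's theorem yields diffeomorphisms $\Phi_i:Y\to Y$ with $\Phi_i^*(k_i\omega_Y)=\omega_i$. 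Since $Y$ is simply connected, this isotopy can be taken Hamiltonian, not merely symplectic.

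Let $L_{\mathrm{mon}}\subset (Y,\omega_Y)$ denote the monotone moment torus fibre (the preimage of the barycentre of the moment polytope). Then $\Phi_i^{-1}(L_{\mathrm{mon}})$ is Lagrangian in $(Y,\omega_i)$, and the proposed candidate is the CJL fibre to which this sequence converges (after Hamiltonian perturbation) as $i\to\infty$. To identify the limit as a CJL fibre $L_{u_i}$, I would apply Lagrangian mean curvature flow with respect to $\omega_{TY}$ starting from $\Phi_i^{-1}(L_{\mathrm{mon}})$, using the convergence strategy of Collins-Jacob-Lin \cite{CJL}: the initial Lagrangian is already close to special Lagrangian on $\mathcal{U}_i$ when $i\gg 1$, so the flow should converge to a special Lagrangian torus $L_{u_\infty}$ of the CJL fibration. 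Monotonicity of $L_{\mathrm{mon}}$, transported through the Hamiltonian isotopy, pins down $u_\infty\in B_0$ as the unique point at which the CJL fibre is monotone.

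Once the Hamiltonian equivalence $L_{u_\infty}\simeq L_{\mathrm{mon}}$ is established, the identity $W(u_\infty)=W^{HV}$ follows from three inputs: (i) the superpotential of a Lagrangian is a Hamiltonian invariant by Fukaya-Oh-Ohta-Ono \cite{FOOO}; (ii) Cho-Oh \cite{CO} showed that $W(L_{\mathrm{mon}})$ equals the Hori-Vafa superpotential $W^{HV}$; and (iii) the renormalization argument implicit in the proof of Theorem \ref{163} identifies the superpotential $W(u)$ computed via the Tian-Yau geometry with the asymptotic superpotential of $L_u\subset (Y,\omega_i)$ as $i\to\infty$. Combining (i)--(iii) yields $W(u_\infty)=W^{HV}$.

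The main obstacle is the convergence argument in the second step. The Hamiltonian potentials $\psi_i$ generating $\Phi_i$ have $C^0$-norm diverging near $D$, since the Tian-Yau potential grows like $(-\log|s|_h^2)^{3/2}$ while any toric potential is bounded there; hence $\Phi_i$ distorts significantly in a neighbourhood of $D$ and the image $\Phi_i^{-1}(L_{\mathrm{mon}})$ may drift toward infinity in $B$ or bubble off holomorphic discs into $D$ in the limit. Controlling this requires delicate a priori estimates in the degenerating Calabi-model region, likely by exploiting the fibrewise $S^1$-symmetry of the normal bundle geometry to restrict $\Phi_i$ to an equivariant modification. A fundamentally different approach would sidestep $\Phi_i$ and instead detect the Hamiltonian class by Floer-theoretic invariants of monotone tori; however, since monotone Lagrangian tori in toric Fano surfaces need not be Hamiltonian isotopic even when they share many invariants (cf.\ Vianna \cite{V3}), this route requires its own uniqueness theorem and appears to be of comparable difficulty.
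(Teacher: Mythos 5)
First, a point of order: this statement appears in the paper as a \emph{conjecture} and is not proved there. The closest the paper comes is a heuristic argument in the special case $Y=\mathbb{P}^2$, and that heuristic follows an entirely different route from yours: degenerate $D$ to a nodal cubic, glue three cylinders with a pair-of-pants to produce three degree-one rational curves $H,\sigma H,\sigma^2 H$ avoiding the central fibre, observe $L_{u_0}\subseteq \mathbb{P}^2\backslash(H\cup\sigma H\cup\sigma^2 H)\cong(\mathbb{C}^*)^2$, and invoke the nearby Lagrangian conjecture for $2$-tori \cite{DGI}. So there is no proof in the paper to compare against, and your proposal must be judged as an attack on an open problem.

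Judged that way, it has a genuine gap, which you partly flag yourself but underestimate. The decisive step is the second one: you assert that $\Phi_i^{-1}(L_{\mathrm{mon}})$ is already close to special Lagrangian on $\mathcal{U}_i$ for $i\gg1$ and that Lagrangian mean curvature flow started there converges to a fibre of the CJL fibration. Neither claim is supported. The Moser map $\Phi_i$ only matches symplectic forms; it gives no control whatsoever on $\mathrm{Im}\,\Omega$ restricted to $\Phi_i^{-1}(L_{\mathrm{mon}})$, so the initial torus has no a priori smallness of Lagrangian angle, and LMCF from general initial data in this noncompact, metrically degenerating geometry can form finite-time singularities; the convergence result of \cite{CJL} is established only for the explicit model tori in the Calabi neck, not for arbitrary Lagrangian tori. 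Even granting smooth convergence to some special Lagrangian torus, you would still need (a) that the limit is a fibre of the CJL fibration rather than some other special Lagrangian, and (b) that monotonicity determines the Hamiltonian isotopy class, which is what you use to identify $u_\infty$. Point (b) fails as a general principle: Vianna \cite{V3} exhibits infinitely many pairwise non-Hamiltonian-isotopic monotone tori in $\mathbb{P}^2$, the paper's own results show the CJL fibres realize Vianna's distinct superpotentials, and the paper's final conjecture (uniqueness of the monotone fibre in each chamber) is itself open — so ``the unique point at which the CJL fibre is monotone'' is not available. Your third step (Hamiltonian invariance of $W$ via \cite{FOOO}, the Cho--Oh computation \cite{CO}, and the renormalization in Theorem \ref{163}) is sound once the first two steps are granted, but as written the argument does not close, and it is not an argument the paper itself makes anywhere.
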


	\section{On the Relative Gromov-Witten Invariants} \label{1}
	The theory of symplectic relative Gromov-Witten invariants are developed by Ionel-Parker \cite{IP2}, Tehrani-Zinger \cite{TZ} and the subsequent works. The definitions are technical and a priori it is hard to establish the equivalence with the relative Gromov-Witten invariants defined in algebraic geometry. Under the setting of del Pezzo surfaces with special Lagrangian fibration, we here propose another approach below and the tropical analogue. 
	
	Recall that the monodromy of the special Lagrangian fibration around $\infty$ is conjugate to $\begin{pmatrix} 1 & d \\ 0 & 1\end{pmatrix}$. This motivates the following definition:
	\begin{definition}
		An affine ray $l$ labeled by $\partial \gamma$ is called admissible if
		the tangent of $l$ is monodromy invariant around the infinity. We say a tropical disc of Maslov index zero is admissible if the corresponding ray in the scattering diagram is admissible. 
		A tropical curve is admissible if it has a unique unbounded edge and the unbounded edge is admissible.
	\end{definition}
	Let $C_{\partial \gamma}$ be the parallel transport of the Lefschetz thimble of infinity to $u$. Set $\bar{\gamma}=\gamma+C_{\partial \gamma}\in H_2(Y,\mathbb{Z})/\mbox{Im}(H_2(L_u,\mathbb{Z})\rightarrow H_2(Y,\mathbb{Z}))$, where we abuse the notation and denote $\gamma$ its image under $H_2(X,L_u)\rightarrow H_2(Y,L_u)$. Since $L_u$ is contractible in $Y$, $\bar{\gamma}$ provides a well-defined curve class of $Y$.
	
	\begin{lem}\label{bounded energy}
		Let $l_{\mathfrak{d}}$ be a ray in the scattering diagram $\mathfrak{D}$ such that $f_{\mathfrak{d}}\neq 1$ and admissible. Fixed a K\"ahler form $\omega_{Y}$ and $f:(D^2,\partial D^2)\rightarrow (X,L_u)$ representing $\gamma_{\mathfrak{d}}$ for $u\in l_{\mathfrak{d}}$. Then $\int_{D^2}|df|^2_{g_Y}d\mbox{vol}<C$ for some constant $C>0$ independent of $u\in l_{\mathfrak{d}}$. 
	\end{lem}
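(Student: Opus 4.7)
The plan is to reduce the energy bound to a topological pairing, exploiting the admissibility of $\gamma_{\mathfrak{d}}$. Since $\omega_Y$ is K\"ahler and compatible with the integrable complex structure $J$ of $Y$, and $f$ is $J$-holomorphic, the pointwise identity $|df|^2_{g_Y}\, d\mbox{vol} = 2\, f^*\omega_Y$ reduces the task to producing a uniform bound on $\int_{D^2} f^*\omega_Y$ as $u$ varies along $l_{\mathfrak{d}}$.

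First I would use admissibility: $\partial\gamma_{\mathfrak{d}}$ is monodromy-invariant around infinity, hence near $D$ is represented by the vanishing $S^1$-fibre of the Calabi model (cf.\ Section \ref{91}). One can therefore pick a Lefschetz-type thimble $C_u \subset Y$ with $\partial C_u = \partial\gamma_{\mathfrak{d}}$ on $L_u$ and intersecting $D$ transversally, so that $f(D^2) \cup C_u$ is a closed 2-cycle in $Y$ representing the topological class $\bar\gamma \in H_2(Y,\mathbb{Z})/\mbox{Im}(H_2(L_u,\mathbb{Z}) \to H_2(Y,\mathbb{Z}))$ defined before the lemma. Because $\omega_Y|_X$ is exact (as shown in the proof of Lemma \ref{6} via the sequence (\ref{Looijenga})), it pairs trivially with the torus class $[L_u]$, so $\omega_Y(\bar\gamma)$ is a well-defined constant depending only on $\gamma_{\mathfrak{d}}$. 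This gives
\begin{equation*}
\int_{D^2} f^* \omega_Y \;=\; \omega_Y(\bar\gamma) \;-\; \int_{C_u} \omega_Y,
\end{equation*}
and the task reduces to bounding $\int_{C_u}\omega_Y$ uniformly in $u$.

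Next I would bound the thimble correction. By Lemma \ref{BPS ray behavior} the ray $l_{\mathfrak{d}}$ converges to infinity, so I fix a base point $u_0 \in l_{\mathfrak{d}}$ close to $D$ with a small base thimble $C_{u_0}$ realized explicitly in the Calabi model, and for other $u\in l_{\mathfrak{d}}$ take $C_u = C_{u_0} \cup \Sigma_{u_0,u}$, where $\Sigma_{u_0,u}$ is the cylinder swept by the parallel transport of $\partial\gamma_{\mathfrak{d}}$ along $l_{\mathfrak{d}}$. On the compact portion of $l_{\mathfrak{d}}$ between $u_0$ and the interior endpoint, $\int_{\Sigma_{u_0,u}}\omega_Y$ is bounded by compactness of the family of parallel-transported loops together with smoothness of $\omega_Y$; on the asymptotic tail toward $D$, the monodromy-invariant cycle $\partial\gamma_{\mathfrak{d}}$ degenerates to the shrinking $S^1$-fibre of the Calabi model, so both $C_{u_0}$ and the asymptotic cylinders have arbitrarily small $\omega_Y$-area.

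The main technical point is arranging the family $\{C_u\}$ coherently along the entire admissible ray and checking the asymptotic smallness using the explicit form of $\omega_{TY}$ near $D$ (so that $\omega_Y$, which extends smoothly across $D$, is uniformly bounded on a tubular neighborhood). Once that is done, combining the two estimates yields a uniform bound on $\int_{C_u}\omega_Y$ and hence on $\int_{D^2}f^*\omega_Y$, proving the lemma with $C$ any constant exceeding $2\bigl(|\omega_Y(\bar\gamma)|+\sup_{u\in l_{\mathfrak{d}}}|\int_{C_u}\omega_Y|\bigr)$.
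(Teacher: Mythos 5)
Your opening reduction (energy equals $2\int_{D^2}f^*\omega_Y$ for a $J$-holomorphic disc and a compatible $\omega_Y$) is fine, but the central identity $\int_{D^2}f^*\omega_Y=\omega_Y(\bar\gamma)-\int_{C_u}\omega_Y$ has a genuine gap: it treats $\int_{D^2}f^*\omega_Y$ as if it were determined by the relative class $\gamma_{\mathfrak{d}}$, and it is not, because $L_u$ is \emph{not} Lagrangian with respect to the fixed K\"ahler form $\omega_Y$ (it is special Lagrangian only for $\omega_{TY}$). For Stokes to apply, the boundary of $f$ and the boundary of your thimble $C_u$ must agree as curves, not merely in $H_1(L_u)$; matching them costs a $2$-chain $S\subseteq L_u$ and a correction term $\int_S\omega_Y$. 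Since $\omega_Y|_{L_u}\neq 0$, this term is not topological; writing $\omega_Y|_{L_u}=d\lambda$ it is controlled by $\|\lambda\|$ times the length of $f(\partial D^2)$, and the boundary length of a holomorphic disc is not a priori bounded by the quantity you are trying to estimate. So the "task reduces to bounding $\int_{C_u}\omega_Y$" step does not go through as stated. A second, smaller issue is that the asymptotic smallness of the thimble area is nontrivial: by Lemma \ref{BPS ray behavior} an admissible ray spirals into infinity, so the swept cylinder wraps around $D$ infinitely often and its finite $\omega_Y$-area requires the exponential decay of the vanishing-circle length against the winding; you flag this but do not carry it out.

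The paper's proof sidesteps both problems by never capping at all. It invokes Lemma \ref{6} to replace $\omega_Y$ by the forms $\omega_i$, for which every fibre $L_u$ \emph{is} Lagrangian, so $\int_{\gamma_{\mathfrak{d}}}\omega_i$ depends only on the relative class; then $\int_{\bar\gamma_{\mathfrak{d}}}\omega_i=k\int_{\beta_0}\omega_i+\int_{\gamma_{\mathfrak{d}}}\omega_i$ with $\int_{\beta_0}\omega_i>0$ (Theorem \ref{115}) gives $\int_{\gamma_{\mathfrak{d}}}\omega_i<\int_{\bar\gamma_{\mathfrak{d}}}\omega_i=O(\epsilon_i^{-20})$ since $[\omega_i]=k_ic_1(Y)$, and the uniform metric comparison $g_i\geq C_2\epsilon_i^{-20}g_Y$ of Lemma \ref{bound} converts this into $\int_{D^2}f^*\omega_Y\leq C$ with the $\epsilon_i^{-20}$ factors cancelling. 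If you want to rescue your route, the cleanest fix is exactly this substitution: run your topological argument with $\omega_i$ in place of $\omega_Y$ (where the correction term over $L_u$ vanishes identically) and only at the end transfer back to $g_Y$ via Lemma \ref{bound}.
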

	\begin{proof}
		Choose $\omega_i$, $i\gg 1$ such that $L_u$ is Lagrangian with respect to $\omega_i$. In particular, $\int_{\beta_0}\omega_i$ and $\int_{\gamma_{\mathfrak{d}}}\omega_i$ is independent of the representative of $\beta_0$ and $\gamma_{\mathfrak{d}}$. Then one has
		\begin{align*}
			\int_{\bar{\gamma}_{\mathfrak{d}}}\omega_i=k\int_{\beta_0}\omega_i+\int_{\gamma_{\mathfrak{d}}}\omega_i>\int_{\gamma_{\mathfrak{d}}}\omega_i.
		\end{align*}Here $k\in \mathbb{N}$ is the divisibility of $\partial \gamma$ and $u$ close enough to $D$. The last inequality is because there exists a holomorphic discs representing $\beta_0$ by Theorem \ref{115}. Then from Lemma \ref{bound}, one has 
		\begin{align*}
			k\epsilon_i^{-20}=\int_{\bar{\gamma}_{\mathfrak{d}}}\omega_i>\int_{\gamma_{\mathfrak{d}}}\omega_i\geq C_2\epsilon_i^{-20}\int_{D^2}f^*\omega_Y.
		\end{align*} Here the first equality comes from $[\omega_i]=k_ic_1(Y)$, $D\cdot \bar{\gamma_{\mathfrak{d}}}=k$ and the lemma follows. 
		%
		%
	\end{proof}

	Let $l$ be an admissible affine ray. Assume that $u_t\in l$, $t\in [0,\infty)$ be a parametrization of $l$ and assume that $\tilde{\Omega}(\gamma;u_t)\neq 0$ along $l$, for some $\gamma$ and $\partial \gamma$ is monodromy invariant. We will first have a compactification of the $1$-parameter family of the moduli spaces of holomorphic discs in relative class $\gamma$,
	\begin{align*}
		\bigcup_{t\in [0,\infty)}\mathcal{M}_{\gamma}(X,L_{u_t}).
	\end{align*} 
	Let $p:\mathcal{Y}=Y \times \mathbb{P}^1 \rightarrow \mathbb{P}^1$ and identify $L_{u_t}'=L_{u_t}\times \{t^{-1}\}\subseteq Y\times \{t^{-1}\}$. Consider $\mathcal{L}$ be the closure of $\cup_{t\in [0,\infty)}L_{u_t}'\subseteq \mathcal{Y}$, which is a $3$-manifold by adding an $S^1$ in $D\times \{0\}$, denoted by $S^1_l$, to $\cup_{t\in [0,\infty)}L_{u_t}'$. We will identify $S^1_l$ as a subset of $D$. 
	\begin{lem}
		The $3$-manifold $\mathcal{L}$ is totally real in $\mathcal{Y}$. 
	\end{lem}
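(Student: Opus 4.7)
To verify $\mathcal{L}$ is totally real in $\mathcal{Y} = Y \times \mathbb{P}^1$ with the product complex structure $J = J_Y \oplus J_{\mathbb{P}^1}$, the plan is to check the condition $T_P\mathcal{L} \cap J\, T_P\mathcal{L} = \{0\}$ at each $P \in \mathcal{L}$, treating separately the interior points $(p, s)$ with $s \neq 0$ and the points on the added circle $S^1_l \times \{0\}$.

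The interior case is direct. At $P = (p, t^{-1})$ with $p \in L_{u_t}$ and $t > 0$, the tangent space $T_P\mathcal{L}$ is spanned by $T_p L_{u_t}$ together with the one-dimensional family direction $v$, whose projection to $T_{t^{-1}}\mathbb{P}^1$ is a nonzero real vector. Since $L_{u_t}$ is special Lagrangian and therefore totally real in $Y$, we have $T_p L_{u_t} + J_Y T_p L_{u_t} = T_pY$. Applying $J$ to $v$ yields a vector whose projection to $T_{t^{-1}}\mathbb{P}^1$ is linearly independent from that of $v$, so $\{v, Jv\}$ spans $T_{t^{-1}}\mathbb{P}^1$ modulo $T_pY$. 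Combining, $T_P \mathcal{L} + J T_P\mathcal{L} = T_p Y \oplus T_{t^{-1}}\mathbb{P}^1 = T_P \mathcal{Y}$.

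For the boundary case $P = (q, 0) \in S^1_l \times \{0\}$, I would use the explicit asymptotic description of the special Lagrangian fibration near $D$ recalled in Section \ref{91}: for $t$ large, $L_{u_t}$ is modeled by a constant-norm $S^1$-bundle over a circle $l_t \subset D$ with $l_t$ admissible and converging to $S^1_l$. After reparametrizing the ray $l$ if necessary, one obtains local smooth coordinates $(\theta, X, Y)$ on $\mathcal{L}$ in a neighborhood of $(q, 0)$, where $\theta$ parametrizes $S^1_l$ and $(X, Y) = (\mathrm{Re}\, w, \mathrm{Im}\, w)$ provide Cartesian coordinates for the collapsing normal $S^1$-direction (with $D = \{w = 0\}$ locally). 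One then computes the differential of the embedding, decomposes its image against the splitting $T_qD \oplus N_{D/Y}|_q \oplus T_0\mathbb{P}^1$, applies $J$, and reduces the claim to a linear algebra check in the local model.

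The main obstacle is this boundary verification: one must match the rate at which the collapsing $S^1$-fiber of $L_{u_t}$ shrinks (governed by the Tian-Yau asymptotics (\ref{asmptotics2})) with the rate at which $t^{-1} \to 0$, so that $\mathcal{L}$ is smoothly embedded near $S^1_l$ and so that its tangent space at $(q, 0)$ contains a real direction with nontrivial $T_0\mathbb{P}^1$-component. Once this is arranged, the totally real property at $(q, 0)$ follows from a dimension count: $T_qS^1_l$ together with its $J_Y$-image generates $T_qD$ (since $S^1_l$ is a real curve in the complex curve $D$), the collapsing-direction pair spans $N_{D/Y}|_q$, and the family direction together with its $J_{\mathbb{P}^1}$-image spans $T_0\mathbb{P}^1$, yielding all of $T_{(q,0)}\mathcal{Y}$.
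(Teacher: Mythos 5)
Your interior case is complete and matches the paper's argument: away from the added circle, the product structure plus the fact that each special Lagrangian fibre is totally real in $Y$ gives $T_P\mathcal{L}+J\,T_P\mathcal{L}=T_P\mathcal{Y}$, which by the dimension count $3+3=6$ is equivalent to the totally real condition. (Your direct span argument is in fact slightly cleaner than the paper's proof by contradiction, which only explicitly treats intersection vectors having nonzero $\mathbb{P}^1$-projection.) The gap is the boundary case, which you do not prove but defer as ``the main obstacle,'' promising a linear algebra check once the tangent space $T_{(q,0)}\mathcal{L}$ is shown to have the form you posit. That identification is not a routine verification to be ``arranged''; it is where the entire content of the lemma is concentrated. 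The paper does essentially what you propose --- it simply asserts a basis $v_1,v_2,v_3$ of $T_x\mathcal{L}$ with $p_*v_1\neq 0$, $v_3\in T_xD$ and $v_2$ a single real normal direction, and then runs the same linear algebra --- so you have reproduced the paper's strategy including its weakest step, without supplying the missing computation.

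Moreover, the step you defer fails in the coordinates you yourself propose. Near $q\in S^1_l$ write $\mathcal{L}$ as the image of $(\theta,w)\mapsto(\gamma_{l_t}(\theta),w,s(|w|_h))$ with $s=t^{-1}$ determined by the norm $|w|_h=\rho_t$ of the collapsing fibre circle; the $\theta$-slices are surfaces of revolution in $\mathbb{C}_w\times\mathbb{R}_s$ with tip at the origin. If $s(\rho)=o(\rho)$ the map is differentiable at $w=0$ with $ds=0$ there, so $T_{(q,0)}\mathcal{L}=T_qS^1_l\oplus N_{D/Y}|_q$, which has no $T_0\mathbb{P}^1$-component and contains the $J$-invariant complex line $N_{D/Y}|_q$; if $s(\rho)\gtrsim\rho$ the parametrization is not differentiable at $w=0$ at all. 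Independently of coordinates, the tangent directions $ie^{i\phi}$ to the collapsing fibre circles sweep out all of $N_{D/Y}|_q$ as $\phi$ varies, so any $C^1$ limit tangent plane at $(q,0)$ must contain that whole complex line --- directly contradicting the decomposition (circle direction, one collapsing direction, family direction with nonzero $\mathbb{P}^1$-projection) on which your concluding dimension count relies. To repair the argument you would need to specify a smooth structure on $\mathcal{L}$ along $S^1_l$ for which the asserted tangent space is correct, or restrict the totally real claim to $\mathcal{L}\setminus S^1_l$ and handle the boundary circle by a separate mechanism; as written, neither your proposal nor the quoted proof does this.
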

	\begin{proof}
		Let $J$ be the product (almost) complex structure of $\mathcal{Y}$, it suffices to prove that $JT\mathcal{L}\cap T\mathcal{L}=\{0\}$ for every $p\in \mathcal{L}$. If $p\in \cup_{t\in [0,\infty)}L_{u_t}'$, 
		we may assume
		there are $v_1$, $v_2\in T_pL_{u_t}'$ and $v_3$, $v_4\in T_p\mathcal{L}$
		which project to non-zero vectors in $T\mathbb{P}^1\subseteq T(B\times \mathbb{P}^1)$ such that
		\begin{align*}
			J(v_1+v_3)=v_2+v_4 \quad \mbox{or} \quad Jv_3-v_4=v_2-Jv_1.
		\end{align*} Notice that on one hand $v_2-Jv_1\in TX_t$ and projects to zero vector in $T\mathbb{P}^1$. On the other hand, the second assumption of $v_i$ guarantees that $p_*v_3 \parallel p_*v_4 \in T\mathbb{P}^1$. Therefore, we have $p_*Jv_3\perp p_*v_4 \in T\mathbb{P}^1$. In particular, $p_*(Jv_3-v_4)\neq 0\in T\mathbb{P}^1$ and leads to a contradiction. 
		
		Now let $x\in \mathcal{L}\backslash \cup_{t\in [0,\infty)}L_{u_t}'$. Set $v_1,v_2,v_3\in T_x\mathcal{L}$ basis. One may assume that $v_3\in T_xD$. $p_*v_1\neq 0$, $p_*v_2=p_*v_3=0$, where $\pi:X\rightarrow B$. If $v\in JT_x\mathcal{L}\cap T_x\mathcal{L}\backslash\{0\}$, then $p_*Jv_1=Jp_*v_1\neq 0$ and not linear combination of $p_*v_i$ which leading to a contradiction. 
		
	\end{proof}
	In particular, one can study the holomorphic discs with boundary on $\mathcal{L}$ and the associate Cauchy-Riemann operator is Fredholm. Given $u_{t_i}\in l_{\mathfrak{d}}$, $t_i\rightarrow \infty$. There exist holomorphic maps representing $k\gamma$, say
	\begin{align*}
		f_i:(\Sigma_i,\partial \Sigma_i)\rightarrow (X,L_{u_{t_i}})\cong (X\times \{t^{-1}_i\},L'_{u_{t_i}})\subseteq (\mathcal{Y},\mathcal{L}),
	\end{align*} where $\Sigma_i$ are bordered Riemann surface of genus zero by the correspondence theorem (Theorem \ref{116}). Moreover, we have the uniform energy bound for $\int_{\Sigma_i}f_i^*\omega_Y$ by Lemma \ref{bounded energy}. Therefore, the compactness theorem still applies \cite{F4} and $f_i$ converge to a holomorphic map $f_{\infty}:(\Sigma_{\infty},\partial \Sigma_{\infty})\rightarrow (\mathcal{Y},\mathcal{L})$ representing $\bar{\gamma}$. Conversely, given a stable holomorphic disc $f:(\Sigma,\partial\Sigma)\rightarrow (\mathcal{Y},\mathcal{L})$. The composition with the projection to the $\mathbb{C}$-factor gives a holomorphic map $f':(\Sigma,\partial\Sigma)\rightarrow (\mathbb{C},\mathbb{R}_{\geq 0})$. By maximal principle, the image $f'(\partial\Sigma)$ is a point. In other words, every such holomorphic disc is contains in a fibre of projection $p$ and
	\begin{align}\label{21}
		\mathcal{M}_{\tilde{\gamma}}(\mathcal{Y},\mathcal{L})      \backslash \bigcup_{t\in [0,\infty)}\mathcal{M}_{\gamma}(X\times \{t^{-1}\},L_{u_t}')\neq \emptyset.
	\end{align}   
	Next we want to know the shape of the curves in the (\ref{21}). 
	\begin{lem} \label{1000}
		The curves in (\ref{21}) compose with the projection $p$ are rational curves with maximal tangency $w$ with $D$, where $w=\bar{\gamma}\cdot D$. 
	\end{lem}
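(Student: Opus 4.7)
The plan is to run the same maximum principle argument used just above to push the whole limit into $Y \times \{0\}$, then apply the identity theorem to contract the bordered components, and finally read off the tangency order from the class $\bar{\gamma}$ and from the fact that $\partial \gamma$ is the vanishing cycle at infinity.

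First, exactly as in the paragraph leading up to (\ref{21}), any stable map $f_\infty \colon \Sigma_\infty \to (\mathcal{Y}, \mathcal{L})$ satisfies $p \circ f_\infty \equiv \mathrm{const}$ by the maximum principle applied to the $\mathbb{P}^1$-factor. For the curves appearing in (\ref{21})---i.e.\ those not lying in $\bigcup_{t\in [0,\infty)}\mathcal{M}_\gamma(X\times \{t^{-1}\}, L'_{u_t})$---this constant must be $0 \in \mathbb{P}^1$, so the image sits in $Y \times \{0\} \cong Y$ with boundary on $S^1_l \subset D$. Projecting along the $Y$-factor then gives a stable holomorphic map $g \colon \Sigma_\infty \to Y$.

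Next, I would show that every bordered component of $g$ is constant. Let $s$ be a holomorphic section with $D = \{s=0\}$. On a bordered component $\sigma$, the function $s \circ g|_\sigma$ is holomorphic on $\sigma$ and identically zero on $\partial \sigma \subset S^1_l \subset D$, so by analytic continuation it vanishes on all of $\sigma$; hence $g(\sigma) \subset D$. Since $D$ is an elliptic curve and $S^1_l \subset D$ is a smooth totally real circle, Schwarz reflection across $S^1_l$ doubles $g|_\sigma$ to a holomorphic map from a closed genus-zero surface into $D$, which is forced to be constant by lifting to the universal cover $\mathbb{C}$ and applying the maximum modulus principle. Thus the non-constant components of $g$ are exactly the closed rational bubbles, and they meet $D$ only at the single collapse point $p_0 \in S^1_l$.

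Putting this together, the image of $g$ is a tree of rational curves in $Y$ attached at $p_0$, whose total class is $\bar{\gamma} = \gamma + C_{\partial \gamma}$: the Lefschetz thimble $C_{\partial \gamma}$ is precisely the chain that caps off the vanishing cycle $\partial \gamma$ as it shrinks in the limit. Each pre-limit map $f_i \colon \Sigma_i \to X = Y \setminus D$ misses $D$ entirely, so by positivity of intersection for $J$-holomorphic curves the only possible intersection of the limit tree with $D$ lies at limits of boundary points, that is, at $p_0$. Conservation of intersection numbers then forces the local multiplicity at $p_0$ to equal the global number $\bar{\gamma}\cdot D = w$, giving the claimed maximal tangency. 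The main obstacle I anticipate is confirming that the bubbling truly concentrates at a single point of $S^1_l$ rather than spreading along $S^1_l$ or producing extra sphere components meeting $D$ away from $p_0$; this is exactly what the combination of the identity-theorem argument for bordered components and positivity of intersection is designed to exclude.
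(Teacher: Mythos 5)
Your argument is correct and follows essentially the same route as the paper: push the limit into the fibre over $0\in\mathbb{P}^1$, show the bordered component is constant so the boundary degenerates to a single point of $S^1_l\subset D$, and then localize all intersection with $D$ at that point to conclude the tangency order equals $\bar{\gamma}\cdot D$. The only differences are cosmetic --- you use the vanishing of $s\circ g$ plus Schwarz reflection into the elliptic curve $D$, and positivity of intersections, where the paper invokes unique continuation and convexity near $D$ together with the maximum principle --- and these substitutions are sound.
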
 
	\begin{proof} Let $f:(\Sigma,\partial \Sigma)\rightarrow (\mathcal{Y},\mathcal{L})$ is an element in (\ref{21}). Since it falls in a fibre of $p$, we will not distinguish $f$ and $p\circ f:(\Sigma,\partial \Sigma)\rightarrow (Y,S^1_{\mathfrak{d}})$. We will prove that $f(\partial \Sigma)$ is a point and $f(\Sigma)\cap D$ is a point. 
		Assume $f_i:(\Sigma_i,\partial \Sigma_i)\in \bigcup_{t\in [0,\infty)}\mathcal{M}_{\gamma}(L_{u_t})$ is a sequence converging to $f$.  From the continuity, the boundary $f(\partial\Sigma)\subseteq \mathcal{L}\backslash \bigcup_{t\in [0,\infty)}L_{u_t}'\subseteq D$, which is complex. This implies that $f(\Sigma)$ is tangent to $D$ along its boundary. The unique continuation theorem implies that a component of $f(\Sigma)$ coincides with $D$ if $f(\partial \Sigma)$ is not a point. Since $\Sigma$ is a union of rational curve and a disc, $f(\partial \Sigma)$ is a point. By maximum principle, the disc component is a constant and $f(\Sigma)$ is an image of tree of rational curve(s).
		Notice that $f_i$ are of Maslov index zero, so $f_i(\Sigma_i)$ avoids $D$. 
		Due the convexity in a neighborhood of $D$ and maximum principle, $f(\Sigma)$ can only intersect $D$ along $S^1_l$ and no bubbling occurs in a collar neighborhood of $f(\Sigma)$. In particular, $f(\Sigma)$ is irreducible. $f_i(\partial \Sigma_i)$ converging to a point implies that $f(\Sigma)$ intersect $D$ at a unique point. In other words, the $f(\Sigma)$ is a rational curve with maximal tangency.    
	\end{proof}
	The following lemma is the first step to make sense of the weighted count of admissible tropical curves.
	\begin{lem} \label{finite rays}
		Given a fixed $w\in \mathbb{N}$,
		there are only finitely many disjoint admissible affine rays $l$ such that 
		\begin{enumerate}
			\item $\tilde{\Omega}(\gamma,u)\neq 0$ for some $\gamma\in H_2(X,L_u)$ along $l$. 
			\item  $\partial \gamma=-w\partial \beta_0$. 
		\end{enumerate}
	\end{lem}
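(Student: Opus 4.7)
The plan is to argue by contradiction: suppose there are infinitely many pairwise disjoint admissible rays $\{l_n\}_{n\in\mathbb{N}}$ satisfying (1) and (2), with labels $\gamma_n \in H_2(X, L_{u_n})$ and $\partial \gamma_n = -w \partial \beta_0$. My first step would be to bound symplectic areas uniformly. By Theorem \ref{116}, the hypothesis $\tilde{\Omega}(\gamma_n, u_n) \neq 0$ forces $L_{u_n}$ to bound a holomorphic disc $f_n$ representing $\gamma_n$; since $\partial\gamma_n = -w\partial\beta_0$ is monodromy-invariant at infinity, Lemma \ref{bounded energy} then yields $\int_{D^2} f_n^*\omega_Y \leq C$ for a constant $C$ independent of $n$.

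Next I would pass from discs on $X$ to rational curves on $Y$ via the compactification of Lemma \ref{1000}. Capping $\gamma_n$ with the Lefschetz thimble $C_{\partial\gamma}$ (which depends only on the fixed boundary class) gives a curve class $\bar{\gamma}_n = \gamma_n + C_{\partial\gamma} \in H_2(Y,\mathbb{Z})$ representable by a rational curve $\bar{C}_n \subset Y$ of maximal tangency order $w$ along $D$, with $\omega_Y$-area bounded by $C$ plus the fixed contribution from the thimble. Since $Y$ is del Pezzo, its Mori cone is a finitely generated rational polyhedral cone, so the effective classes of bounded $\omega_Y$-area form a finite set, and after passing to a subsequence I may assume $\bar{\gamma}_n = \bar{\gamma}$ is constant. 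A dimension count using $c_1(Y)\cdot\bar{\gamma} = \bar{\gamma}\cdot D = w$ shows that the moduli space of rational curves in class $\bar{\gamma}$ with maximal tangency $w$ to $D$ at an unspecified point has virtual dimension $w + 2 - 3 - (w-1) = 0$; Gromov compactness then ensures this moduli space is a finite set.

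Finally, the compactification of Lemma \ref{1000} produces from each $l_n$ a triple $(\bar{C}_n, p_n, S^1_{l_n})$, where $p_n \in D$ is the tangency point and $S^1_{l_n} \subset D$ is the vanishing cycle there, all determined by the limit of disc boundaries. By the previous step there are only finitely many such curves, hence finitely many points $p_n$, and the monodromy-invariant boundary class $\partial\gamma_n$ together with $p_n$ pins down the asymptotic behavior of $l_n$ at infinity. Since the flat section $\gamma_n = \bar{\gamma} - C_{\partial\gamma}$ is also determined, the complex affine structure forces $l_n$ to be unique given this data, giving the desired contradiction.

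The main obstacle I anticipate is uniqueness in the last step: two distinct disjoint rays could in principle carry the same class $\gamma$ and converge to the same tangency datum. To rule this out, I would combine Lemma \ref{11}—which says the locus in $B$ where $L_u$ bounds a disc of class $\gamma$ lies in the affine hypersurface $\mathrm{Im}\int_\gamma \Omega = 0$—with Fukaya's trick (Theorem \ref{66}) to argue that any two such rays must locally coincide on this hypersurface, contradicting disjointness.
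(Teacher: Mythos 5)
Your overall strategy --- push each ray to a rational curve of maximal tangency with $D$ via the compactification of Lemma \ref{1000}, show there are only finitely many such curves, and recover finiteness of the rays --- matches the paper's. But the step that carries the real content has a genuine gap: you compute that the moduli space of maximal tangency rational curves in a fixed class has virtual dimension zero and then invoke Gromov compactness to conclude it is finite. Compactness plus \emph{virtual} dimension zero does not give finiteness: the complex structure on $Y$ is fixed and integrable, the tangency condition need not be cut out transversally, and the actual moduli space could a priori be positive-dimensional; you cannot perturb $J$ away from this. This is exactly what the paper's proof works to exclude: a positive-dimensional family $g:\mathbb{P}^1\times C\rightarrow Y$ of maximal tangency rational curves with $g^{-1}(D)\subseteq\{\infty\}\times C$ would, after pulling back the meromorphic volume form $\Omega$ and contracting with a vector field on $C$, produce a nonzero meromorphic $1$-form on $\mathbb{P}^1$ with a single simple pole --- impossible by the residue theorem. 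You need this rigidity argument (or a substitute); the dimension count alone is not a proof. (Your energy-bound and Mori-cone reductions are fine but heavier than needed; ampleness of $D$ already bounds the effective classes with $\beta\cdot D=w$.)

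The final step also does not close as written. You correctly flag uniqueness as the obstacle, but the proposed fix --- Lemma \ref{11} plus Fukaya's trick forcing two rays with the same label to coincide --- is not convincing: the locus $\mathrm{Im}\int_\gamma\Omega=0$ is a union of affine lines and can perfectly well contain two disjoint rays, so "locally coincide on the hypersurface" does not contradict disjointness. The paper's mechanism is different and cleaner: disjoint admissible rays $l,l'$ have disjoint limit circles $S^1_l,S^1_{l'}\subset D$, the tangency point of the rational curve attached to $l$ lies on $S^1_l$ and is a $w$-torsion point of $D$ (of which there are only $w^2$), so distinct disjoint rays produce distinct tangency points and hence distinct curves. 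Finiteness of the curves then immediately bounds the number of disjoint rays. Replacing your last paragraph with this argument, and the dimension count with the residue argument, would bring the proof in line.
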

	\begin{proof}
		From Lemma \ref{1000}, every such admissible ray  $l$ gives rise to a rational curve intersecting $D$ at a single point in $S^1_l$ with tangency $w$. Then such point is a $w$-torsion point of $D$ and there is only $w^2$ of such points. It is a classical fact in algebraic geometry that  there are only finitely many such rational curves with tangency condition $w$. Indeed, there only finitely many effective curve classes of $Y$ which intersection number with $D$ is $w$ since $D$ is ample. If there are infinitely many rational curves with tangency condition $w$ (notice that $Y$ has an integral almost complex structure), there exists a positive family of such rational curves $g:\mathbb{P}^1\times C\twoheadrightarrow Y$ with $g^{-1}(D)\subseteq \{\infty\}\times C$. Then $g^*\Omega$ is a non-zero holomorphic $2$-form on $(\mathbb{P}^1\backslash \{\infty\})\times C$ with a simple pole along $\{\infty\}\times C$. Contracting the pull-back of a non-zero vector field on an open subset of $C$ gives a non-zero meromorphic $1$-form on $\mathbb{P}^1$ with only a simple pole at infinity, which leads to a contradiction. 
		Since $S^1_{l}\cap S^1_{l'}=\emptyset$ if $l, l'$ disjoint in a neighborhood of infinity, there are only finitely many such admissible affine rays.
	\end{proof}
	
	The following theorem is to make sense of weighted count of admissible tropical curves. 
	\begin{thm} \label{finite trop rel}
		There are only finitely many admissible tropical curves representing $\beta\in H_2(X,\mathbb{Z})$ with $\beta\cdot D\leq w$, for a fixed $w\in \mathbb{N}$.
	\end{thm}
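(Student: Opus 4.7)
The plan is to reduce finiteness to Lemma \ref{finite rays} (finiteness of admissible rays with bounded tangency) combined with area estimates that bound the combinatorial complexity of the bounded part of the tropical curve. Let $\Gamma$ be an admissible tropical curve representing $\beta$, with unbounded edge the admissible ray $l$ labeled by some $\gamma$. Since only the unbounded edge of $\Gamma$ escapes to $D$ (the rest is compactly supported in $B_0$), the tangency of $\bar{\gamma}$ at $D$ equals $\bar{\beta}\cdot D \leq w$. Lemma \ref{finite rays} applied with this tangency bound then leaves only finitely many choices for the ray $l$.

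Once $l$ is fixed, $\Gamma$ decomposes as $l$ together with a finite subtree $T'$ attached to $l$ at some interior vertex $v$; the balancing condition at $v$ and the labeling data of $T'$ conform to Definition \ref{15}. To bound the complexity of $T'$ I would use that $[\omega_i] = k_i c_1(Y) = k_i[D]$ on $Y$, so
\begin{align*}
\int_{\beta}\omega_i \;=\; k_i\,(\bar{\beta}\cdot D) \;\leq\; k_i w,
\end{align*}
which is a uniform (in the combinatorial data, for each fixed $i \gg 1$) upper bound on the symplectic area carried by $\Gamma$. Via the correspondence theorem (Theorem \ref{116}), this upper bound translates to a bound on the sum of the $\omega_{TY}$-areas of the Maslov index zero tropical pieces constituting $T'$.

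Next, I would adapt Lemma \ref{MI2 asym} to the admissible tropical curve setting: the subtree $T'$ cannot have bendings arbitrarily deep in a neighborhood $U_\infty^n$ of infinity, because Lemma \ref{MI0 sink} together with the parabola analysis of Lemma \ref{MI2 asym} would drive all subsequent segments further toward infinity and in particular would force $v$ itself to lie in $U_\infty^n$, contradicting the fact that the location of $v$ along $l$ is determined by the combinatorial data of $\Gamma$. Hence every bending of $T'$ lies in some relatively compact subset $K \subset B$. Lemma \ref{affine distance} now provides a uniform lower bound $C>0$ on the $\omega_{TY}$-area contributed by any edge of $T'$ connecting disjoint compact neighborhoods of the (finitely many, by Lemma \ref{generic}) singular points of the affine structure. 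Combined with the previous area bound this caps the number of edges of $T'$ at roughly $k_i w/C$, so $T'$ has only finitely many combinatorial types; for each combinatorial type the positions of the internal vertices are rigidly constrained, and a Gromov-compactness style argument modeled on the second half of the proof of Lemma \ref{well-defined} yields the finiteness of $\Gamma$.

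The main obstacle is adapting Lemma \ref{MI2 asym}. In that lemma the argument exploits a fixed interior stop of a broken line and propagates the decrease of $|u|$ \emph{forward} through successive bendings. Here the distinguished edge is at infinity rather than at a stop, and one has to argue in the reverse direction, starting from a hypothetical bending near infinity and showing that the subtree cannot reattach to the ray $l$ at a prescribed interior vertex $v$. A careful tracking of the orientations — inward from $l$ into $T'$ versus outward in the broken-line setup — together with a monotonicity comparison of the symmetry axes of the successive parabolas produced by Lemma \ref{BPS ray behavior}, is what will be required to push the parabola geometry of Lemma \ref{MI2 asym} through in this setting.
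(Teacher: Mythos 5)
Your overall architecture matches the paper's proof — reduce to finitely many admissible rays via Lemma \ref{finite rays}, confine the attaching vertex $v$ of the unbounded edge to a compact region, then finish with an area bound and a Lemma \ref{well-defined}-style compactness argument (the paper implements this last step by chopping the unbounded edge at a fixed $u_\infty$ close to $D$ and invoking finiteness of bounded-area tropical discs ending at $u_\infty$). The problem is that the step you yourself flag as ``the main obstacle'' is exactly where the paper's proof does its real work, and the substitute you offer for it does not hold up. You claim that a bending deep in $U_\infty^n$ would ``force $v$ itself to lie in $U_\infty^n$, contradicting the fact that the location of $v$ along $l$ is determined by the combinatorial data of $\Gamma$.'' There is no contradiction there: nothing forbids a rigid admissible tropical curve from having its attaching vertex deep inside $U_\infty^n$, and a priori one could have an infinite sequence of combinatorial types whose vertices $v$ march off toward infinity, each still representing a class of the same small degree. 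Rigidity alone does not rule this out.

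What actually kills that scenario in the paper is a quantitative link, absent from your proposal, between the depth of $v$ and the tangency $\beta\cdot D$. Writing the two bounded edges at $v$ as labeled by $m+n\frac{d}{2\pi i}\log{u}$ and $m'+n'\frac{d}{2\pi i}\log{u}$, the balancing condition against the monodromy-invariant unbounded edge forces $n=-n'$, and the claim inside the proof of Lemma \ref{MI2 asym} places the vertices of the two associated parabolas outside $\{r<-R_0\}$; the explicit parabola equation (\ref{parabola}) then yields $\overline{[h]}\cdot D=m+m'>kR_0$ whenever $h(v)=e^{r+i\theta}$ with $r<-kR_0$. Hence a bounded tangency $w$ confines $v$ to a region depending only on $w$, after which your (and the paper's) concluding compactness argument goes through. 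Your ``monotonicity comparison of the symmetry axes'' points in the right direction, but the estimate needed is a lower bound on the \emph{separation} of the two axes — which is $\frac{2\pi(m+m')}{nd}$ and hence directly measures the degree — not merely a statement about their relative position, and your write-up never connects the parabola geometry to $\beta\cdot D$ at all. As it stands this is a genuine gap rather than a routine adaptation.
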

	\begin{proof}
		Given an admissible affine ray $l$ in Lemma \ref{finite rays}, it suffices to prove that there are finitely many admissible tropical curves
		\begin{enumerate}
			\item with the image of the unique unbounded edges intersect $l$. 
			\item  representing a curve class with intersection number $w$ with $D$. 
		\end{enumerate}
		We will use the notation in the proof of Lemma \ref{MI2 asym}. Let $u=e^{r+i\theta}$ with $\theta\in [0,2\pi)$ and $r<-kR_0$. Assume that there are two affine rays $l,l'$ such that
		\begin{enumerate}
			\item intersect at $u$ and labeled by $m+n\frac{d}{2\pi i}\log{u}$ and $m'+n'\frac{d}{2\pi i}\log{u}$ for $m,n,m',n'\in \mathbb{Z}$,
			\item the corresponding parabolas have their vertices $(r_0, \frac{2m\pi}{nd})$ and $(r_0', \frac{2m'\pi}{n'd})$ with $r_0, r_0'> -R_0$ and $n=-n'$. 
		\end{enumerate} We claim that $m+m'> kR_0$. 
		Without loss of generality, we may assume that $\frac{2m'\pi}{n'd}<\theta< \frac{2m\pi}{nd}$. This implies that $l$ spirals into the infinity (a.k.a $u=0$) clockwisely and $l'$ spirals into the infinity counter-clockwisely. This implies that $m,n$ have the same sign and $m',n'$ have opposite signs. Assume that $l$ is the tropical disc representing $\gamma$ then
		\begin{align*}
			\int_{\gamma}-\frac{du}{u}\wedge dx= -m(r-r_0)-\frac{nd}{4\pi}(2r\theta-2r_0\theta_0)
		\end{align*} is increasing along $l$ or $-rm(1+\frac{nd}{2m \pi}\theta)>0$. Thus $m>0$ and $n>0$ since $mn>0$. Since $m',n'=-n<0$ have different signs, we have $m'>0$ as well.  
		From equation (\ref{parabola}), one has 
		\begin{align*}
			|\theta-\frac{2m\pi}{nd}|= (r^2-r_0^2)^{\frac{1}{2}}> (k^2-1)^{\frac{1}{2}}R_0
		\end{align*} and similar one for $m',n'$. Therefore, 
		\begin{align*}
			\frac{2\pi}{nd}(m+m')=\frac{2\pi m\pi}{nd}-\frac{2m'\pi}{n'd}=(\frac{2\pi m\pi}{nd}-\theta)+(\theta-\frac{2m'\pi}{n'd})> 2(k^2-1)^{\frac{1}{2}}R_0
		\end{align*} and the claim is proved since $n,d\in \mathbb{N}$.
		
		Now assume $(h,T,w)$ is an admissible tropical curve such that the image of unbounded edge intersect $l$ and
		the unique vertex $v$ adjacent to the unbounded edge falls in $U_{\infty}$. If we write $h(v)=e^{r+i\theta}$, then $r<-R_0$. Let $e,e'$ be the other two edges of $T$ adjacent to $v$ and $l,l'$ be the affine rays extending the image $h(e),h(e')$. Then $l,l'$ satisfies the second assumption in the claim by claim in Lemma \ref{MI2 asym}. Thus, the claim implies that $\overline{[h]}\cdot D=m+m'>kR_0$ if $r<-kR_0$. In other words, $h(v)$ falls outside a neighborhood of infinity depending on $w$.
		
		Now assume that there are infinitely many distinct admissible tropical curves $(h_i,T_i,w_i)$ representing $\beta$ with $\beta\cdot D=w$ and the unique unbounded edges coincide in a neighborhood of the infinity. Choose $u_{\infty}\in l$ close to $D$ enough such that $n_{\beta_0}(u_{\infty})=1$ by Theorem \ref{115}. In particular, there exists a holomorphic disc representing $\beta_0$ ending on $L_{u_{\infty}}$. Choose $\omega_i$ such that $L_{u_{\infty}}$ is a Lagrangian and then $\int_{\beta_0}\omega_i>0$. Chopping off the unbounded edges at $u_{\infty}$ from $(h_i,T_i,w_i)$ and give rise to infinitely many distinct tropical discs $(h_j',T_j',w_j')$ ending at $u_{\infty}$ such that 
		\begin{align*}
			\int_{[h_j']}\omega_{TY}=\int_{[h_j']}\omega_i=\int_{\beta}\omega_i-\int_{\beta_0}\omega_i<\int_{\beta}\omega_i,
		\end{align*} bounded by a fixed number. However, there are only finitely many tropical discs (up to enlongation of the edge adjacent to the root) and leads to a contradiction. This finishes the proof of the theorem. 
	\end{proof}
	The following is a combination of Theorem \ref{finite trop rel} with the correspondence theorem (see Theorem \ref{116}):
	\begin{cor}
		Let $l$ be an admissible ray and $\gamma\in H_2(X,L_u)$, for $u\in l$ with $\partial\gamma$ monodromy invariant around the infinity. Then  
		\begin{align*}
			\tilde{\Omega}(\gamma;l):=\lim_{u\rightarrow \infty, u\in l}\tilde{\Omega}(\gamma;u)
		\end{align*}
		exists. 
	\end{cor}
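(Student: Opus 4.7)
The plan is to show that $\tilde{\Omega}(\gamma;u)$ is eventually constant along $l$ as $u\to \infty$, so the limit trivially exists. By the definition of $\tilde{\Omega}$ via Fukaya's trick, this invariant can jump at $u\in l$ only where $u$ lies on (or at an intersection of) rays of the scattering diagram $\mathfrak{D}$ whose classes combine to $\gamma$. More precisely, combining Theorem \ref{116} with Theorem \ref{complete} shows that a jump at $u\in l$ requires a ray $l_{\mathfrak{d}}\subset \mathfrak{D}$ with $u\in l_{\mathfrak{d}}$ and a decomposition $\gamma = \gamma' + k_0\gamma_{\mathfrak{d}}$ with $\langle \gamma',\gamma_{\mathfrak{d}}\rangle \neq 0$ such that $\gamma'$ is itself represented by a tropical disc of Maslov index zero ending at $u$. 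It therefore suffices to bound where such intersections can occur along $l$ near infinity.

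For this I adapt the parabola computation from the proofs of Lemma \ref{MI2 asym} and Theorem \ref{finite trop rel}. In the Calabi model near infinity, write $u = e^{r+i\theta}$ and $\partial \gamma_{\mathfrak{d}} = m_{\mathfrak{d}} + n_{\mathfrak{d}}\frac{d}{2\pi i}\log u$, and similarly for $\partial \gamma'$. The admissibility hypothesis forces $n_\gamma = 0$, hence $n_{\gamma'} = -k_0 n_{\mathfrak{d}}$. The rays $l_{\mathfrak{d}}$ and $l_{\gamma'}$ lie on the hyperbolas from Lemma \ref{BPS ray behavior}, with symmetry axes $c_{\mathfrak{d}} = 2m_{\mathfrak{d}}\pi/(n_{\mathfrak{d}}d)$ and $c_{\gamma'} = -2m_{\gamma'}\pi/(k_0 n_{\mathfrak{d}}d)$. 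By the inductive argument behind Lemma \ref{MI2 asym}, the vertex $r$-coordinates of these hyperbolas are uniformly bounded below by a single constant $-R_0$. If the two rays meet at $u$ with $r<-kR_0$, the computation in the proof of Theorem \ref{finite trop rel} yields $|c_{\mathfrak{d}}-c_{\gamma'}| > 2(k^2-1)^{1/2}R_0$, which using $m_\gamma = k_0 m_{\mathfrak{d}} + m_{\gamma'}$ simplifies to
\[
\frac{|m_\gamma|}{|k_0 n_{\mathfrak{d}}|} > \frac{(k^2-1)^{1/2}R_0 d}{\pi}.
\]
Since $m_\gamma$ is fixed and $k_0 n_{\mathfrak{d}}$ is a nonzero integer (the case $n_{\mathfrak{d}} = 0$ yields rays parallel to $l$ which meet $l$ only at infinity), this fails for $k$ sufficiently large, ruling out wall-crossings on $l$ in $\{r < -k_{\max}R_0\}$ for a suitable $k_{\max}$.

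Combined with the local finiteness of walls from Lemma \ref{well-defined}, only finitely many jumps of $\tilde{\Omega}(\gamma;u)$ occur along $l$, and the invariant stabilizes past a certain point on $l$; this stabilized value is $\tilde{\Omega}(\gamma;l)$. The main obstacle is justifying the uniform vertex bound $R_0$ for all rays of the complete scattering diagram $\mathfrak{D}$, not just those in the initial diagram $\mathfrak{D}_{\mathrm{in}}$. This should follow by the same induction used in Lemma \ref{MI2 asym}: the new ray formed at the intersection of two existing rays with vertex $r$-coordinates $> -R_0$ again has vertex $r$-coordinate $> -R_0$, by the hyperbola algebra already used there.
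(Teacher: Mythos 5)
Your proposal is correct and takes essentially the same route as the paper: the corollary is stated there precisely as a combination of Theorem \ref{finite trop rel} with the correspondence theorem, and your argument simply unpacks that combination, using the parabola estimate from the proof of Theorem \ref{finite trop rel} (whose uniform vertex bound is indeed supplied by the induction in the proof of Lemma \ref{MI2 asym}, so the ``obstacle'' you flag is already handled) to rule out wall-crossings affecting $\tilde{\Omega}(\gamma;\cdot)$ on $l$ near infinity, whence the invariant stabilizes. The only cosmetic difference is that you phrase the finiteness in terms of scattering points on $l$ rather than directly as finiteness of admissible tropical curves of bounded degree.
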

	With the above theorem, we can have the following definition:
	\begin{definition}
		Given a curve class $\beta\in H_2(Y,\mathbb{Z})$, define the tropical relative Gromov-Witten invariant of genus zero
		\begin{align*}
			N^{Y/D,trop}_{0,\beta}=\sum_{(\gamma;l):\bar{\gamma}=\beta} \tilde{\Omega}(\gamma;l).
		\end{align*}	 
	\end{definition}
	\begin{rmk}
		One would expect that there are $w$ such admissible affine lines and each corresponds to $w$ of the $w$-torsion points.
	\end{rmk}
	We expect that all the rational curves with maximal tangency condition are limiting of holomorphic discs, i.e. elements in (\ref{21}).
	Therefore, the compactification of the family of moduli spaces
	\begin{align*}
		\bigcup_{t\in [0,\infty)}\mathcal{M}_{\gamma}(X,L_{\phi(t)})
	\end{align*}
	gives a cobordism between the moduli spaces of holomorphic discs $\mathcal{M}_{\gamma}(L_u)$ and the moduli space of holomorphic curves with maximal tangency. This leads to the following conjecture
	\begin{conj} \label{open to closed}
		The symplectic count of holomorphic curves with maximal tangency is the sum of limit of certain open Gromov-Witten invariants. 
	\end{conj}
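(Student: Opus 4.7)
The plan is to upgrade the cobordism picture sketched just above the conjecture to an actual equality of weighted counts. Fix an admissible affine ray $l$ and a primitive class $\gamma \in H_2(X, L_u)$, $u \in l$, with $\partial\gamma$ monodromy-invariant. The paper already constructs the totally real $3$-manifold $\mathcal{L} \subset \mathcal{Y} = Y \times \mathbb{P}^1$ and identifies the compactified family moduli space
\begin{align*}
\overline{\mathcal{M}}_{\bar\gamma}(\mathcal{Y}, \mathcal{L}) \supset \bigcup_{t \in [0, \infty)} \mathcal{M}_\gamma(X \times \{t^{-1}\}, L'_{u_t}),
\end{align*}
whose boundary at $t = \infty$ consists, by Lemma \ref{1000}, of rational curves in fibres of $p$ with maximal tangency $w = \bar\gamma \cdot D$ meeting $D$ at a point of $S^1_l$. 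The strategy is to equip this family with a virtual fundamental chain realizing a one-parameter cobordism between the moduli of discs on $L_{u_t}$ for $t \gg 0$ and the moduli of maximal-tangency rational curves, then to read off the matching of counts ray by ray, finally summing over admissible rays to recover $N^{Y/D, trop}_{0, \beta}$.

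First I would set up Kuranishi structures \textit{à la} Fukaya-Oh-Ohta-Ono on $\overline{\mathcal{M}}_{\bar\gamma}(\mathcal{Y}, \mathcal{L})$, using the totally real property to guarantee that the linearized Cauchy-Riemann operator is Fredholm of index one. The uniform energy bound of Lemma \ref{bounded energy}, together with the Gromov compactness theorem for totally real boundary, ensures that all degenerations are either stable discs over interior $t$ or bubble trees over $t = 0$; I would rule out disc-bubbles at $t = 0$ by the maximum-principle argument of Lemma \ref{1000}, which forces any disc component to be constant and any sphere component to lie in a fibre $Y \times \{0\}$. Then I would identify the $t = 0$ boundary stratum with the moduli of genus-zero maximal-tangency curves in class $\bar\gamma$, the $S^1_l$ freedom in the boundary point matching the $w$-fold ambiguity in choosing a $w$-torsion limit on $D$.

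Next I would match the weighted counts. On the $t \gg 0$ side, the contribution of the family to the open Gromov-Witten invariant $\tilde{\Omega}(\gamma; l) = \lim_{u \to \infty, u \in l} \tilde\Omega(\gamma; u)$ is well-defined by the corollary following Theorem \ref{finite trop rel}. On the $t = 0$ side, multiplicities come from the standard virtual count of rational curves with tangency order $w$ at a specified point of $D$, summed over the $w$ torsion points corresponding to the $w$ admissible rays (as indicated in the remark before the conjecture). The cobordism equates the two, yielding
\begin{align*}
N^{Y/D}_{0, \beta} = \sum_{(\gamma, l): \bar\gamma = \beta} \tilde\Omega(\gamma; l) = N^{Y/D, trop}_{0, \beta},
\end{align*}
which is the precise form of the conjecture.

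The main obstacle is the \emph{surjectivity} onto the space of maximal-tangency curves: I would need every such rational curve to appear as a limit of Maslov-zero discs along some admissible ray, not just a subset. To handle this I would run a neck-stretching argument along $D$, interpolating $J$ with the Calabi-model complex structure $J_\mathcal{C}$ as in the proof of Theorem \ref{115}, so that every maximal-tangency curve is approximated in the Calabi model by the $S^1$-symmetric holomorphic cylinders that come from the germ-of-elliptic-fibration structure near infinity. Combined with the correspondence Theorem \ref{116} to identify the relevant discs with admissible tropical discs, this should close the argument. A secondary technical issue is the possibility of multiple cover contributions and torsion phenomena entering the quadratic refinement of Conjecture \ref{12}; I expect these to match both sides of the equality tautologically once the BPS-style regrouping (\ref{multiple cover}) is used consistently on both moduli spaces.
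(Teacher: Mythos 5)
The statement you are trying to prove is labelled a \emph{conjecture} in the paper, and the paper does not prove it: the text preceding it only sets up the cobordism heuristic (the totally real $3$-manifold $\mathcal{L}\subset Y\times\mathbb{P}^1$, Lemma \ref{1000}, Lemma \ref{finite rays}) as motivation and then explicitly says ``We expect that all the rational curves with maximal tangency condition are limiting of holomorphic discs.'' Your proposal is essentially a restatement of that same heuristic, and the two places where you depart from the paper's exposition are exactly the places where the real mathematical content would have to live, and where your argument is only a plan rather than a proof. First, surjectivity: Lemma \ref{1000} shows that limits of the Maslov index zero discs are maximal-tangency rational curves, but nothing in the paper (or in your sketch) shows that \emph{every} maximal-tangency rational curve in class $\bar\gamma$ arises as such a limit. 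Your proposed fix by neck-stretching toward the Calabi model does not obviously work as stated: a maximal-tangency rational curve is a global object passing through the compact part of $Y$, so stretching the neck along $D$ degenerates it into an SFT building with pieces in $Y\setminus D$ and in the normal cone of $D$, and identifying those pieces with the discs bounded by the special Lagrangian fibres requires a full relative compactness and gluing analysis that you do not supply.

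Second, the ``virtual fundamental chain realizing a one-parameter cobordism'' is not a routine application of the Fukaya-Oh-Ohta-Ono machinery. The degeneration at the end of the family is the collapse of the Lagrangian boundary circle onto a point of $D$ with tangency order $w$; this is an open-to-closed transition, not a standard codimension-one boundary stratum of a moduli space of stable discs, and its local model (exponential decay rates, the tangency order, multiple covers of the shrinking annulus, and the resulting multiplicity with which a single maximal-tangency curve contributes to the disc count) is precisely the hard analytic input that the paper defers to future work and to the comparison with Li--Song. Declaring that the multiple-cover contributions ``match tautologically'' via the BPS regrouping (\ref{multiple cover}) assumes the integrality Conjecture \ref{12}, which is itself open in this paper. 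So while your outline correctly identifies the intended mechanism and the two main obstacles, it does not close either of them; the statement remains a conjecture.
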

	This is the long-term conjectural relation from open to relative Gromov-Witten invariants, which motivated the earlier work of Li-Song \cite{LS2}. 
	If one can further identify the tropical geometry on the base of the special Lagrangian fibration introduced here with the one developed in the Gross-Siebert program. 
	Via the tropical/holomorphic correspondence, it also implies the equivalence of the symplectic relative Gromov-Witten invariants with the algebraic relative Gromov-Witten invariants. We will see it is indeed the case for $Y=\mathbb{P}^2$ in the next section. 
	Although the result is more restrictive and not applies to all projective manifold, the usage of tropical/holomorphic correspondence avoids the difficulty of comparing the fundamental cycles directly. We will explore in the direction in the future.

	%
	%
	%
	%
	%
	
	\section{Tropical Geometry of $\mathbb{P}^2$}
	\subsection{Complex Affine Structure of $\mathbb{P}^2$} Strominger-Yau-Zaslow conjecture suggests that the mirror is given by the dual torus fibration of the special Lagrangian fibration. 
	Due to the difficulty in analysis of the existence of special Lagrangian fibration, Kontsevich-Soibelman \cite{KS1}, Gross-Siebert \cite{GS1} proposed an algebraic alternative approach to construct the mirror. In the algebraic construction, one starts with a toric degeneration and there is a natural affine manifold with singularities $B_{GS}$ induced from the dual intersection complex. The affine manifold $B_{GS}$ is the algebraic analogue of the base for special Lagrangian fibration. From the scattering diagram on $B_{GS}$, which is the algebraic analogue of information of holomorphic discs with boundary on special Lagrangian torus fibres, one can reconstruct the mirror. 
	
	The case of the pairs $(Y,D)$, where $Y$ is a toric del Pezzo surface and $D$ is a smooth anti-canonical divisor is considered by Carl-Pumperla-Siebert \cite{CPS}. 
	Below we will describe the affine manifold for $\mathbb{P}^2$ in \cite{CPS}. The underlying space is $\mathbb{R}^2$. There are three singularities with local monodromy conjugate to $\begin{pmatrix} 1 & 1\\ 0 & 1 \end{pmatrix}$ locating at $u_1=(\frac{1}{2},\frac{1}{2})$, $u_2=(0,-\frac{1}{2})$ and $u_3(-\frac{1}{2},0)$. To cooperate with the standard affine structure of $\mathbb{R}^2$ for computation convenience, they introduce cuts and the affine transformation as follows: let
	\begin{align*}
		l_1^+&=\{(\frac{1}{2},y)|y\geq \frac{1}{2} \}  \\
		l_1^-&=\{(x,\frac{1}{2})|x\geq \frac{1}{2} \}  \\
		l_2^+&=\{(x,-\frac{1}{2})|x\geq 0  \}          \\
		l_2^-&=\{(-t, -\frac{1}{2}-t)|t\leq -\frac{1}{2}| t\geq 0 \}          \\
		l_3^+&=\{ (-\frac{1}{2}-t,-t)| t\geq 0 \}  \\
		l_3^-&=\{(-\frac{1}{2}, y )|y \geq 0 \}    
	\end{align*} 
	Discard the sector bounded by $l_i^+,l_i^-$, then glue the cuts by the affine transformations $\begin{pmatrix} 2 & 1\\ -1 & 0 \end{pmatrix}$, $\begin{pmatrix} -1 & 4\\ -1 & 3 \end{pmatrix}$, $\begin{pmatrix} -1 & 1\\ -4 & 3 \end{pmatrix}$ and one gets the affine manifold in \cite{CPS}. See Figure \ref{fig:CPS}\footnote{The picture credit is due to Tsung-Ju Lee.} below. 
	\begin{figure}
		\centering
			\includegraphics[scale=0.8]{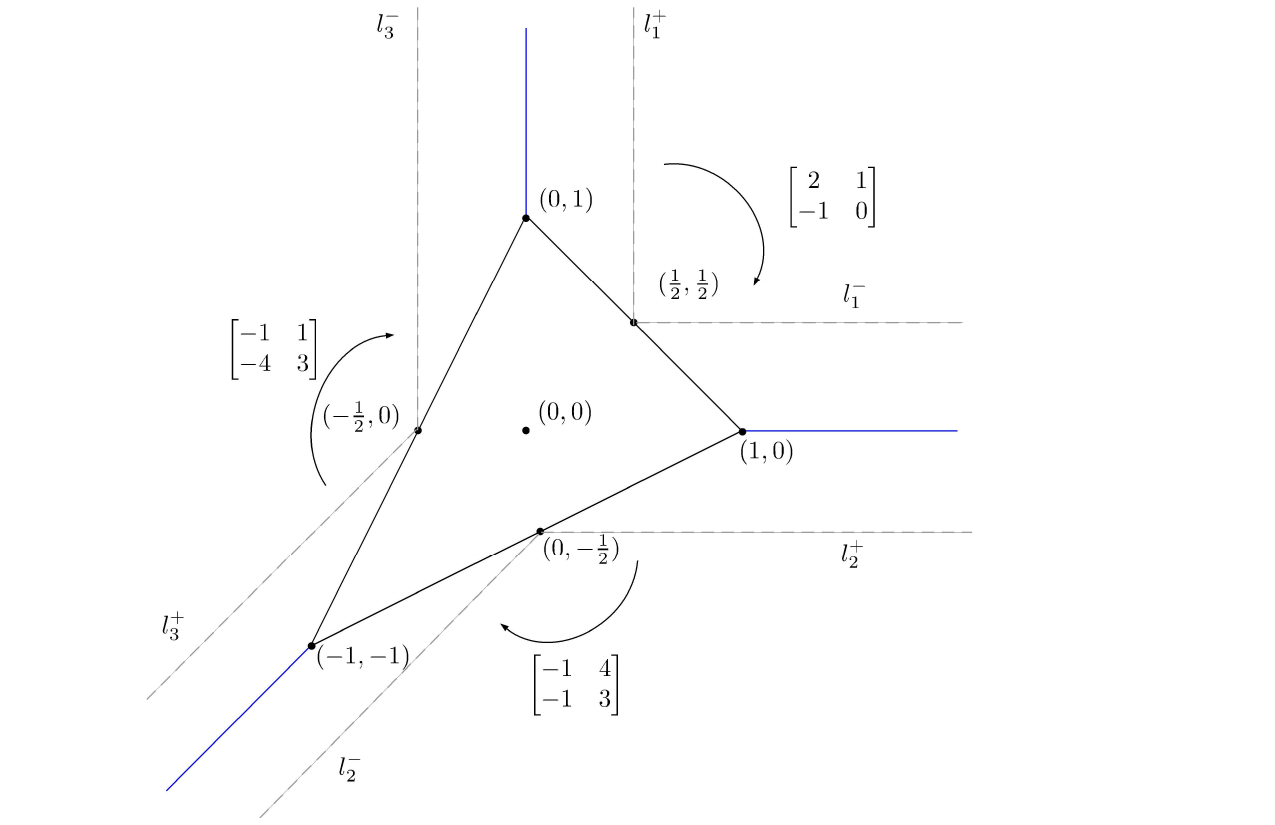}
		\caption{The integral affine structure in Carl-Pumperla-Siebert}
		\label{fig:CPS}
	\end{figure}
	
	\begin{definition}
		We will call the region bounded by $l_1^-$, $l_2^+$ and the affine segments connecting $(\frac{1}{2},\frac{1}{2}),(0,-\frac{1}{2})$ and the origin the $x$-region. Similar for $y$-region and $1/xy$-region. 
	\end{definition}
	\begin{definition}
		Let $u\in B$ and $v\in T_uB_{\mathbb{Z}}$. Assume that $u$ falls in $x$-region, then scale of $v$ is defined to be the $x$-component of $v$, i.e. $v\cdot (1,0)$. We define the scale for $u$ falls in $y$-region or $1/xy$-region similarly. 
	\end{definition}
	\begin{lem} \label{159}
		Let $(h,w,T)$ be a tropical disc of Maslov index zero with stop at $u$. Assume that $e$ is the edge adjacent to $u$ and $u'$ is the other end point of $h(e)$. Let $v\in T_{u'}B_{\mathbb{Z}}$ be the primitive tangent vector of $h(e)$. Then 
		\begin{enumerate}
			\item the scale of $v$ positive and 
			\item the parallel transport of $v$ along $h(e)$ has non-decreasing scale. 
		\end{enumerate}
	\end{lem}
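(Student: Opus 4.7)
The plan is to prove assertions (1) and (2) simultaneously by induction on the number $N$ of vertices of $T$.

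\textbf{Base case ($N=2$).} Then $u'$ is one of the three singularities $u_1,u_2,u_3$ and, by Definition \ref{15}(3), the primitive tangent $v$ is the monodromy invariant direction at $u'$. For each of the three singularities I would tabulate the monodromy invariant direction together with the region $h(e)$ enters, and read off the scale directly from the CPS coordinates. For example at $u_1=(1/2,1/2)$, the invariant direction along $l_1^-$ is $(1,0)$, entering the $x$-region with scale $+1$; applying the gluing $\bigl(\begin{smallmatrix}2&1\\-1&0\end{smallmatrix}\bigr)$ to land on the $l_1^+$ side yields a positive $(-1,-1)$-component in the $1/xy$-region. Analogous checks at $u_2,u_3$ using the matrices $\bigl(\begin{smallmatrix}-1&4\\-1&3\end{smallmatrix}\bigr)$ and $\bigl(\begin{smallmatrix}-1&1\\-4&3\end{smallmatrix}\bigr)$ complete the base case.

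\textbf{Inductive step for (1).} Suppose both assertions hold for trees with fewer than $N$ vertices. If $u'$ is a bivalent singular vertex (Definition \ref{15}(4)), the two adjacent edges carry the same weight and both point in the monodromy invariant direction, and the conclusion follows from the base case argument. Otherwise $u'$ is a non-singular vertex of valence at least two. Deleting $u'$ gives subtrees; those not containing the root, each together with its edge to $u'$ and regarded as a tropical disc of Maslov index zero with stop at $u'$, have strictly fewer vertices. Let $v_1,\ldots,v_k$ be the outgoing primitive tangent vectors at $u'$ along these other edges $e_1,\ldots,e_k$, with weights $w_1,\ldots,w_k$. By the induction hypothesis applied to each sub-disc, the primitive tangent $\tilde v_j$ at the far end $u_j''$ of $e_j$ has positive scale, and by (2) its parallel transport to $u'$ still has positive scale. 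Under the convention that $\tilde v_j$ points toward the stop $u'$, its parallel transport to $u'$ equals $-v_j$, so $\mbox{scale}(v_j)<0$ at $u'$. The balancing condition (\ref{19}) at $u'$ reads
\begin{align*}
w(e)\,v + \sum_{j=1}^k w_j v_j = 0,
\end{align*}
and since scale is a fixed linear functional on $T_{u'}B$,
\begin{align*}
w(e)\,\mbox{scale}(v) = -\sum_{j=1}^k w_j\,\mbox{scale}(v_j) > 0,
\end{align*}
proving (1) at $u'$.

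\textbf{Parallel transport along $h(e)$.} If $h(e)$ stays within a single region, $v$ is constant in flat coordinates and its scale is constant, hence trivially non-decreasing. Otherwise $h(e)$ crosses one or more of the six half-rays $l_i^{\pm}$, and at each crossing $v$ is transformed by the corresponding gluing matrix with the scale recomputed in the new region. I would organize the argument as a finite case analysis indexed by (cut crossed, crossing direction). The key structural input is that $v$, by the inductive structure, lies in the positive integral cone spanned by parallel transports of the monodromy invariant directions at $u_1,u_2,u_3$. On this restricted cone, each gluing matrix acts with the desired monotonicity, which I would verify directly.

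\textbf{Main obstacle.} The principal difficulty is (2). The three CPS gluing matrices are not scale non-decreasing on all of $\mathbb{Z}^2$, so one cannot treat $v$ as a generic lattice vector; the argument must crucially exploit that $v$ comes from a Maslov index zero tropical disc. Cataloguing how the restricted cone of admissible tangent vectors interacts with each of the six cut crossings, especially when $h(e)$ is long enough to cross several cuts consecutively, is the main bookkeeping burden of the proof; a cleaner reduction to the case of a single singularity-originated direction, extended by positive linearity, would be the natural way to shorten the case analysis.
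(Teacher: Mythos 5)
Your proposal is correct and follows essentially the same route as the paper: induction on the size of the tree, a direct computation at the singularities for the base case, the balancing condition plus linearity of the scale for positivity at interior vertices, and an explicit check of the gluing matrices on the restricted cone of admissible directions for each cut crossing (the paper likewise works out only one representative crossing, from the $1/xy$-region to the $x$-region, and declares the rest similar). You make the balancing-condition step and the need to restrict to the admissible cone more explicit than the paper does, but the underlying argument is the same.
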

	\begin{proof}
		We will prove the lemma by induction on the number of non-contracted edges. If $h$ has only one non-contracted edge, then it is the (multiple of the) initial disc and the first part of lemma follows from direct computation. Assume that $h(e)$ stay in the same region, then the scale of $v$ is invariant under the parallel transport. To prove the second part of the lemma, it suffices to consider the case when $h(e)$ go through the cut once. We will prove the case that it goes from $1/xy$-region to $x$-region and the other cases follow from the similar calculation. Let $v=(a,b)$, then one has $0<-a<-b$. Then after parallel transport to the  $x$-region, one has 
		\begin{align*}
			v\mapsto \begin{pmatrix} 3 & -4 \\ 1 & -1 \end{pmatrix} \begin{pmatrix}a \\b \end{pmatrix}= \begin{pmatrix} 3a-4b \\a-b \end{pmatrix} 
		\end{align*} and the later has scale $3(a-b)-b\geq 4>0$.    
		Now assume the lemma holds for all tropical discs of Maslov index zero with at most $k$ non-contracted edges. Let $(h,w,T)$ be a tropical disc of Maslov index zero with $k+1$ non-contracted edges. Then the first part of the lemma follows from the induction hypothesis. The second part of the lemma follows from the same calculation above.

	\end{proof}

	Generally the complex/symplectic affine structure associated to a special Lagrangian fibration is hard to compute due to various difficulties in analysis. One important feature of the special Lagrangian fibrations constructed in \cite{CJL} is that the equation for $X$ and $\check{X}$ can be both explicit. This is used to compute the complex affine structure.

	\begin{thm} \cite{LLL} \cite{B5} \label{169} Let $Y=\mathbb{P}^2$.
		The complex affine structure on $B$ induced from the special Lagrangian fibration on $X$ coincides with the the affine structure from Carl-Pumperla-Siebert \cite{CPS}.
	\end{thm}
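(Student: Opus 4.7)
The plan is to leverage the explicit description of $\check{X}$ from Theorem \ref{res}: for $Y=\mathbb{P}^2$, the hyperK\"ahler rotation $\check{X}$ embeds in a rational elliptic surface $\check{Y}$ whose fibre at infinity is of type $I_9$ (since $d=(-K_{\mathbb{P}^2})^2=9$). Because $\chi(\check{Y})=12$ and $\chi(I_9)=9$, the remaining singular fibres must contribute Euler characteristic $3$, and for generic $(\mathbb{P}^2,D)$ these are three nodal ($I_1$) fibres by Lemma \ref{generic}. Via the identity $\mbox{Im}\,\check{\Omega}=-\mbox{Im}\,\Omega$ from (\ref{HK rel}), the complex affine structure on $B$ coming from $\pi:X\to B$ agrees (up to sign of integral basis) with the one computed from $\check{\Omega}$ on the holomorphic elliptic fibration $\check{X}\to B$, which is the point of entry for explicit computation.

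First I would write down a Weierstrass model for this extremal rational elliptic surface. Near infinity the Calabi model gives $\check{\Omega}=-\frac{du}{u}\wedge dx$ (as recorded before Lemma \ref{BPS ray behavior}), and this extends globally to a form whose periods are classical elliptic integrals along the fibre. Computing $\mbox{Im}\int_{\gamma_i}\check{\Omega}$ along a flat basis $\gamma_1,\gamma_2$ of relative cycles in a simply connected region of $B_0$ produces an explicit integral affine chart. To match CPS one verifies that the three discriminant points are the images of the three $I_1$ fibres and sit, after a unimodular change of basis, at $(\tfrac{1}{2},\tfrac{1}{2})$, $(0,-\tfrac{1}{2})$, $(-\tfrac{1}{2},0)$.

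The monodromies around each singularity are conjugate to $\begin{pmatrix}1 & 1\\ 0 & 1\end{pmatrix}$ by Kodaira's classification, and their product together with the transitions across the cuts $l_i^{\pm}$ must reproduce the monodromy $\begin{pmatrix}1 & 9\\ 0 & 1\end{pmatrix}$ of the $I_9$ fibre at infinity. Concretely, I would track parallel transport of $(\gamma_1,\gamma_2)$ across each cut $l_i^{\pm}$ and compare the resulting transition with $\begin{pmatrix}2 & 1\\ -1 & 0\end{pmatrix}$, $\begin{pmatrix}-1 & 4\\ -1 & 3\end{pmatrix}$, $\begin{pmatrix}-1 & 1\\ -4 & 3\end{pmatrix}$ respectively. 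This is reduced to a finite, if delicate, linear-algebraic check once the basis of flat sections in each of the $x$-, $y$-, and $1/xy$-regions is fixed.

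The main obstacle is global rather than local. Up to the choice of branch cut and of monodromy-invariant direction, the three focus-focus singularities are locally indistinguishable, so the local charts agree tautologically. The non-trivial content is global calibration: one must fix $\int_{[L]}\Omega=1$, fix the monodromy-invariant boundary classes consistently with the $I_9$ configuration at infinity, and rule out any residual $GL(2,\mathbb{Z})\ltimes\mathbb{R}^2$ ambiguity that would move the three singularities away from their prescribed CPS locations. This calibration is what the explicit period computations in \cite{LLL} and \cite{P4} achieve, by fixing natural coordinates on the pair $(\mathbb{P}^2,D)$ and evaluating the periods of $\Omega$ against a canonical basis of relative cycles compatible with the flat section of $\pi$ supplied by Theorem \ref{92}.
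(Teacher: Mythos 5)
The paper does not actually prove Theorem \ref{169}: it is imported verbatim from \cite{LLL} and \cite{P4}, so there is no internal argument to measure your proposal against. Judged on its own terms, your outline correctly identifies the route those references take: pass to the elliptic fibration $\check{X}\to B$ via (\ref{HK rel}), recognize $\check{Y}$ as the rational elliptic surface with configuration $I_9+3I_1$ (the Euler-characteristic count $12=9+3$ together with Lemma \ref{generic}), and compute the complex affine coordinates $\mathrm{Im}\int_{\gamma_i}\Omega=-\mathrm{Im}\int_{\gamma_i}\check{\Omega}$ as periods of the meromorphic volume form on that surface. The local checks you list (each CPS transition matrix has trace $2$ and determinant $1$, hence is conjugate to $\bigl(\begin{smallmatrix}1&1\\0&1\end{smallmatrix}\bigr)$, and their product must be compatible with the $I_9$ monodromy $\bigl(\begin{smallmatrix}1&9\\0&1\end{smallmatrix}\bigr)$ at infinity) are all correct and necessary.

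The genuine gap is that these checks are nowhere near sufficient, and the one step that would actually establish the theorem is deferred rather than performed. Any integral affine structure on $\mathbb{R}^2$ with three focus-focus points and $I_9$ monodromy at infinity passes the local and monodromy tests; what singles out the CPS structure is the global calibration --- the relative affine positions of the three singular points and the precise identification of the developing map up to a single element of $GL(2,\mathbb{Z})\ltimes\mathbb{R}^2$. Your final paragraph states that this calibration \emph{is what the explicit period computations in \cite{LLL} and \cite{P4} achieve}, which makes the argument circular: the theorem being proved is precisely the output of those computations. To close the gap one must (i) write down a Weierstrass model for the extremal $I_9+3I_1$ surface, (ii) evaluate the periods of $\check{\Omega}$ over a flat basis of relative cycles as explicit elliptic integrals, and (iii) verify that the resulting chart places the discriminant points and monodromy-invariant directions exactly as in Figure \ref{fig:CPS}. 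None of (i)--(iii) is carried out in the proposal.
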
 The above theorem together with the tropical/holomorphic correspondence makes the computation of the open Gromov-Witten invariants more explicitly in the next section.

	\subsection{Explicit Examples of Lower Degree for $\mathbb{P}^2$} \label{2}
	In this section, we will compute the some of the open Gromov-Witten invariants of Maslov index zero and Maslov index two.

	First we consider the open Gromov-Witten invariants for the Maslov index zero discs. 
	\begin{ex} \label{168}
		Given a rational curve $C$ of degree $d$ with maximal tangency to the elliptic curve $D$. Let $p=C\cap D$, then $p$ is a $3d$-torsion point of the elliptic curve $D$.  which are expected to recover the log Gromov-Witten invariants defined in Section \ref{1} in degree $d=1,2$ explicitly via the tropical/holomorphic correspondence.  
		For $d=1$, there are three tropical log curves and each with multiplicity three. Indeed, there nine $3$-torsion points and the tangent line at these $3$-torsion points are the only rational curves of degree one with maximal tangency. For $d=2$, there are three affine rays corresponding to $6$-torsion but not $3$-torsion. Three exists one tropical log curve in each direction with multiplicity $6$. There are three affine rays corresponding to $3$-torsion points. There are four tropical log curves with the only unbounded edges mapped to each of the affine ray (see Figure \ref{fig:1051}), with weights $3/4, -9/2, -9/2, 21/4$ respectively. From the multiple cover formula (\ref{multiple cover}), we have $(21/4+3/4)=-6$ (up to sign), which correspond to the $6$ rational curves of degree with maximal tangency and matches with the calculation by Takahashi \cite{T6}. 
	\end{ex}

	\begin{prop} \label{sup}
		Assume that $u_0$ is the origin in Figure \ref{fig:CPS}, then the superpotential $W(u_0)=x+y+\frac{1}{xy}$. 
	\end{prop}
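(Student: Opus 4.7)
The plan is to invoke Theorem~\ref{163} to reduce the computation of $W(u_0)$ to enumerating broken lines at the origin, i.e., to computing
$W^{trop}(u_0) = \sum_\beta n^{trop}_\beta(u_0)\, T^{\omega(\beta)} z^{\partial\beta}$.
By Theorem~\ref{169} the complex affine structure on the base coincides with the Carl-Pumperla-Siebert structure in Figure~\ref{fig:CPS}, so I may work directly in that picture, in which $u_0$ is the common boundary point of the $x$-, $y$-, and $1/xy$-regions.

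Producing the three contributing broken lines is the first step. Each of the three regions supplies a unique monodromy-invariant direction to infinity, hence by Example~\ref{initial broken line} a unique length-one broken line of weight $1$ terminating at $u_0$, representing a relative class $\beta_i \in H_2(X,L_{u_0})$ of Maslov index two with $\beta_i \cdot D = 1$. The $\mathbb{Z}/3$ rotational symmetry of Figure~\ref{fig:CPS} cyclically permutes these three broken lines, so their symplectic areas $\omega(\beta_i)$ coincide. Since $\partial\beta_1 + \partial\beta_2 + \partial\beta_3 = 0$ in $H_1(L_{u_0},\mathbb{Z})$ (the sum $\beta_1+\beta_2+\beta_3$ pulls back the hyperplane class, which closes up on $L_{u_0}$), writing $x := z^{\partial\beta_1}$ and $y := z^{\partial\beta_2}$ gives $z^{\partial\beta_3} = 1/(xy)$, so these three broken lines contribute precisely $x + y + 1/(xy)$ to $W^{trop}(u_0)$, after absorbing the common factor $T^{\omega(\beta_i)}$ into the variables.

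The second step is to rule out all other broken lines. By Lemma~\ref{well-defined} the total number of broken lines representing a given class is finite. A broken line of length $\geq 2$ ending at $u_0$ must bend at a point of a ray $l_\mathfrak{d}$ of the complete scattering diagram $\mathfrak{D}$ passing through $u_0$. Lemma~\ref{159} forces the tangent vector of the final edge to have positive, non-decreasing scale in the corresponding region, and Definition~\ref{intial} together with the monotone increase of $\omega_{TY}(\gamma_\mathfrak{d})$ along rays controls the secondary rays generated by Theorem~\ref{complete}. The initial rays emanate from the $u_i$ along the cuts $l_i^{\pm}$ and do not pass through $u_0$; I then need to verify that no secondary ray of $\mathfrak{D}$ passes through $u_0$ in a direction compatible with a bent broken line whose class $\beta$ still satisfies $\beta \cdot D = 1$.

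The main obstacle is precisely this last verification. The complete scattering diagram for $\mathbb{P}^2$ is infinite and highly intricate, so even with Lemma~\ref{159} cutting down the admissible directions and the $\mathbb{Z}/3$-symmetry reducing the analysis to a single region, a direct check requires care. I would proceed by induction on $\omega_{TY}(\gamma_\mathfrak{d})$, using Gromov compactness to bound the rays that could contribute to $\beta$ with $\beta \cdot D = 1$ to a finite list, and the consistency condition of Theorem~\ref{complete} to check that none of them passes through $u_0$ in a direction that would produce a non-zero $n^{trop}_\beta(u_0)$ beyond the three initial contributions. Together with the first two steps this yields $W(u_0) = x + y + \tfrac{1}{xy}$.
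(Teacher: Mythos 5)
Your first step is sound and essentially parallel to the paper's: you produce the three contributions $x$, $y$, $1/xy$ from the three monodromy-invariant directions to infinity (the paper gets them instead by degenerating $D$ to three hyperplanes and identifying $L_{u_0}$ with a moment-map fibre, but your route via Example \ref{initial broken line} and the $\mathbb{Z}/3$ symmetry of Figure \ref{fig:CPS} is legitimate and arguably cleaner). The problem is the second step, which you yourself flag as ``the main obstacle'': you never actually rule out broken lines of length $\geq 2$. Your proposed remedy --- induct on $\omega_{TY}(\gamma_{\mathfrak{d}})$, use Gromov compactness to get a finite list of candidate rays through $u_0$, and check consistency --- does not obviously close, because for a single fixed class $\beta$ the list may be finite, but the superpotential sums over \emph{all} Maslov index two classes, and the complete scattering diagram of $\mathbb{P}^2$ is infinite; a class-by-class finite check gives no uniform reason why every one of infinitely many potential bent configurations fails. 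As written, the proof is incomplete at exactly the point where all the content lies.

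The paper closes this gap with a monotonicity argument built on the ``scale'' of Lemma \ref{159}, which you cite only as ``cutting down the admissible directions'' but do not exploit. The actual mechanism is: every Maslov index zero tropical disc has scale at least one at its stop, and scale is additive under gluing of tropical trees; since a bend of a Maslov index two tropical disc ending at $u_0$ amounts to gluing in a Maslov index zero subdisc, and the classes of the edges adjacent to $u_0$ cannot accommodate the resulting increase in scale, no such subdisc can occur. Hence every Maslov index two tropical disc at $u_0$ is unbent, and an unbent disc is one of the three initial ones. This single conserved-quantity argument disposes of the entire infinite scattering diagram at once, which is precisely what your case-analysis plan lacks. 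To repair your proof, replace the final verification with this additivity-of-scale argument (or an equivalent monotone quantity along broken lines).
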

	\begin{proof}
		One can scale the affine structure such that the three singular fibres are in a small neighborhood of infinity. Then the boundary divisor falls in a tubular neighborhood of degeneration of three hyperplanes. Up to a coordinate change, one can identify the three hyperplanes as the standard coordinate planes $\{x=0\},\{y=0\}, \{z=0\}$. Under the identification, $L_0$ is isotopic to the moment torus fibre. The obviously three tropical discs of Maslov index two have homology classes corresponds to the homology of Maslov index two discs of moment torus fibre via the isotopy. 
		
		To prove the theorem, it suffices to prove that there are no other tropical discs of Maslov index two. edge adjacent to $u_0$. Since every Maslov index zero has its scale equals or larger than one and scale is additive with respect to gluing of tropical trees, $(h,w,T)$ contains no sub-tropical discs of Maslov index zero. Following the same argument of Lemma \ref{159}, any affine ray leaving the original basic region the scale is positive and non-decreasing. Therefore, the above three tropical discs are the only tropical discs of Maslov index two. 
		
	\end{proof}
	
	\begin{prop}
		$L_{u_0}$ is a monotone Lagrangian torus in $\mathbb{P}^2$ for a suitable choice of $\omega_i$. 
	\end{prop}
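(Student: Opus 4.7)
The plan is to reduce monotonicity to the equality of symplectic areas of three Maslov index two discs and then force this equality by a $\mathbb{Z}/3$ symmetry of the whole setup. From the long exact sequence, $H_2(\mathbb{P}^2, L_{u_0})$ fits into $0 \to H_2(\mathbb{P}^2) \to H_2(\mathbb{P}^2, L_{u_0}) \to H_1(L_{u_0}) \to 0$, so it has rank three. By Proposition \ref{sup}, the superpotential $W(u_0) = x + y + (xy)^{-1}$ produces three Maslov index two classes $\beta_1, \beta_2, \beta_3 \in H_2(X, L_{u_0})$ with $\sum_j \partial\beta_j = 0$; consequently $\sum_j \beta_j$ lies in the image of $H_2(\mathbb{P}^2)$, and since its total Maslov index equals $6 = \mu([H])$ it must equal the hyperplane class $[H]$. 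Because $[\omega_i] = k_i c_1(\mathbb{P}^2)$, the hyperplane already satisfies $\omega_i([H]) = 3k_i = (k_i/2)\mu([H])$, so monotonicity with ratio $\tau = k_i/2$ is equivalent to $\omega_i(\beta_1) = \omega_i(\beta_2) = \omega_i(\beta_3)$.

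To enforce this, I would choose $D$ to be a member of the Hesse pencil that is invariant under the cyclic automorphism $\sigma : [x:y:z] \mapsto [y:z:x]$ of $\mathbb{P}^2$. By uniqueness of the Tian-Yau metric \cite{TY1} the Kähler form $\omega_{TY}$ is $\sigma$-invariant, and $\sigma^*\Omega = \zeta\,\Omega$ for a cube root of unity $\zeta$. The CJL special Lagrangian fibration is canonically attached to this data—obtained as a limit of Lagrangian mean curvature flow starting from a model fibration near $D$ that can be taken $\sigma$-symmetric—hence $\sigma$ descends to an integral affine automorphism of $B$. Using Theorem \ref{169} to identify $B$ with Figure \ref{fig:CPS}, this automorphism fixes the origin $u_0$ and cyclically permutes the three singularities $(\tfrac{1}{2},\tfrac{1}{2})$, $(0,-\tfrac{1}{2})$, $(-\tfrac{1}{2},0)$; it therefore acts on $T_{u_0}B_{\mathbb{Z}} \cong H_1(L_{u_0},\mathbb{Z})$ by the unique order-three matrix $A = \bigl(\begin{smallmatrix} -1 & 1 \\ -1 & 0 \end{smallmatrix}\bigr) \in SL(2,\mathbb{Z})$ realising this permutation. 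Any $A$-orbit consists of three vectors summing to zero, so $\sigma$ must permute $\{\partial\beta_1, \partial\beta_2, \partial\beta_3\}$ cyclically.

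The final step is to choose $\omega_i$ from Lemma \ref{6} itself $\sigma$-equivariantly. The construction there depends only on the reference Kähler form $\omega_Y \in c_1(\mathbb{P}^2)$, on its potential $\phi$ with $\omega_Y = i\partial\bar\partial\phi$, and on cut-off functions $\chi_k$, $\eta_k$ composed with $\phi$ and $\phi_{TY}$. Averaging $\omega_Y$ and $\phi$ over $\langle\sigma\rangle$ preserves positivity, plurisubharmonicity and the cohomology class, while the cut-offs depend only on level sets of $\sigma$-invariant potentials; hence the whole sequence $\omega_i$ can be made $\sigma$-invariant. Then $\omega_i(\beta_1) = \omega_i(\sigma_*\beta_1) = \omega_i(\beta_2) = \omega_i(\beta_3)$, which combined with the hyperplane computation above gives monotonicity of $L_{u_0}$ in $(\mathbb{P}^2, \omega_i)$.

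The main obstacle is verifying the $\sigma$-equivariance of the CJL fibration with precision: one must check that $\sigma$-invariant initial data for the Lagrangian mean curvature flow in \cite{CJL} yields a $\sigma$-equivariant limit, and that the subsequent deformation argument sweeping out $X$ also preserves the symmetry. This should follow from uniqueness of the LMCF solution and its analytic continuation, but requires an equivariant rereading of those proofs.
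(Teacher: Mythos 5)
Your overall reduction is exactly the skeleton of the paper's proof: use the short exact sequence $0\to H_2(\mathbb{P}^2)\to H_2(\mathbb{P}^2,L_{u_0})\to H_1(L_{u_0})\to 0$, take the three Maslov index two classes $\beta_1,\beta_2,\beta_3$ from Proposition \ref{sup} with $\beta_1+\beta_2+\beta_3=[H]$, and force $\omega(\beta_1)=\omega(\beta_2)=\omega(\beta_3)$ by an order-three symmetry fixing $L_{u_0}$ and permuting the classes. Where you diverge is in the source of that symmetry, and this is where your version acquires both a restriction and a genuine gap. You impose the symmetry on the complex side, choosing $D$ to be a Hesse-invariant cubic and $\sigma$ the cyclic automorphism $[x:y:z]\mapsto[y:z:x]$; you then need the entire Collins--Jacob--Lin package (Tian--Yau metric, Lagrangian mean curvature flow, the deformation argument sweeping out $X$) to be $\sigma$-equivariant. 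You flag this yourself as the main obstacle, and as written it is an unproved step, not a routine one; moreover your argument only applies to the special symmetric cubics, whereas the fibration in the paper is attached to an arbitrary smooth $D$.

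The paper sidesteps all of this by locating the symmetry on the hyperK\"ahler-rotated side. By Theorem \ref{res}, $\check{Y}$ is the (extremal) rational elliptic surface with an $I_9$ fibre at infinity and three $I_1$ fibres; it carries a $\mathbb{Z}_3$-action permuting the three singular fibres and fixing the meromorphic volume form $\check{\Omega}$. Since $\omega=\mathrm{Re}\,\check{\Omega}$ by (\ref{HK rel}), this action is automatically a symplectomorphism of $(X,\omega)$ of order three, with no a priori symmetry assumed on the equation of $D$ and no equivariance of the mean curvature flow construction required. It fixes $L_{u_0}$ and permutes the three singularities of $B$, hence the three classes $\beta_i$, and the equality of areas follows at once. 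So: same strategy, but the paper's choice of where to realize the $\mathbb{Z}_3$ removes precisely the two weak points of your argument. If you want to salvage your route, the equivariant rereading of the LMCF construction is the step you would actually have to supply; the cleaner fix is to transplant the symmetry to $\check{Y}$ as the paper does, and then your final averaging step producing $\sigma$-invariant forms $\omega_i$ from Lemma \ref{6} is a reasonable way to make "for a suitable choice of $\omega_i$" precise.
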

	\begin{proof}
		Notice that there is a $\mathbb{Z}_3$-action on the extremal rational elliptic surface permuting the three singular fibres fixing the unique meromorphic volume form with simple pole along the $I_9$ fibre. From (\ref{HK rel}), this translates to a symplectomorphism of order three of $(X,\omega)$. Notice that we don't a priori assume that the equation of $D$ has an $\mathbb{Z}_3$-symmetry. Let $\sigma$ be a such symplectomorphism. 
		
		To see that 
		$L_0$ is a monotone Lagrangian torus,
		recall that one has the short exact sequence    
		\begin{align*}
			0\rightarrow H_2(\mathbb{P}^2,\mathbb{Z})\rightarrow H_2(\mathbb{P}^2,L_0;\mathbb{Z})\rightarrow H_1(L_0,\mathbb{Z})\rightarrow 0. 
		\end{align*}
		Denote $\beta_0,\beta_1,\beta_2\in H_2(\mathbb{P}^2,L_0;\mathbb{Z})$ representing the relative class representing the three tropical discs of Maslov index two in Proposition \ref{sup}. Then $\mu(\beta_i)=2$ and $\beta_i$ generates $H_2(\mathbb{P}^2,L_0;\mathbb{Z})$. Notice that $L_{u_0}$ is the fixed locus of $\sigma$ and the action of $\sigma$ permutes $\beta_0,\beta_1,\beta_2$, we have  $\omega(\beta_0)=\omega(\beta_1)=\omega(\beta_2)>0$. Since $\beta_0+\beta_1+\beta_2=H$, where $H$ is the generator of $H_2(\mathbb{P}^2,\mathbb{Z})$. Thus, $L_0$ is a monotone Lagrangian torus with respect to $\omega$.
	\end{proof}
	Together with Proposition \ref{sup},
	it is natural to expect the following statement:
	\begin{conj}
		$L_{u_0}$ is Hamiltonian isotopic to the monotone moment map torus of $\mathbb{P}^2$. 
	\end{conj}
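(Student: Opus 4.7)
The plan is to combine the Floer-theoretic data already established in Proposition \ref{sup} with a geometric degeneration of the anti-canonical divisor to a toric configuration, and then to exploit the $\mathbb{Z}_3$-symmetry to upgrade a smooth Lagrangian isotopy to a Hamiltonian one. The starting point is that $W(u_0) = x + y + \frac{1}{xy}$ is exactly the Cho-Oh \cite{CO} superpotential of the monotone Clifford torus in $\mathbb{P}^2$; since the superpotential is a Hamiltonian invariant of monotone Lagrangians, this is a strong indication (though not yet a proof) that $L_{u_0}$ is Hamiltonian isotopic to the Clifford torus.

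Concretely, I would first choose a $\mathbb{Z}_3$-equivariant one-parameter family $D_s \subset \mathbb{P}^2$, $s \in (0,1]$, of smooth anti-canonical elliptic curves with $D_1 = D$, degenerating as $s \to 0$ to the triangle $D_0 = \{xyz = 0\}$. Theorem \ref{92} gives a special Lagrangian fibration $\pi_s : X_s \to B_s$ for each $s>0$, and by the $\mathbb{Z}_3$-symmetry argument already used in the proof of the preceding proposition, there is a unique $\mathbb{Z}_3$-fixed monotone fibre $L^s$ with three disc classes $\beta_0^s, \beta_1^s, \beta_2^s$ of equal area summing to $[H]$. At $s=0$ the fibration should converge to the toric moment map on $\mathbb{P}^2$, and the $\mathbb{Z}_3$-fixed barycentric fibre is the Clifford torus by definition.

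Next, I would argue that the resulting smooth Lagrangian isotopy $\{L^s\}_{s\in [0,1]}$ is in fact Hamiltonian. The flux class of the isotopy lives in $H^1(L^0;\mathbb{R})/\Gamma$ and is $\mathbb{Z}_3$-invariant because the entire construction is equivariant; since the induced $\mathbb{Z}_3$-action on $H^1(L^0;\mathbb{R}) \cong \mathbb{R}^2$ permutes the primitive disc boundary classes $\partial \beta_i^0$ cyclically and hence has no nonzero fixed vectors, the flux vanishes identically. Moser's argument then upgrades the Lagrangian isotopy to a Hamiltonian one, yielding $\psi \in \mathrm{Ham}(\mathbb{P}^2,\omega)$ with $\psi(L_{\mathrm{Cliff}}) = L_{u_0}$.

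The principal obstacle is the degenerate end of the family at $s=0$: as $D_s$ acquires nodes, the Tian-Yau metric on $X_s$ blows up near those nodes, the three focus-focus singularities on $B_s$ drift to the corners of the moment triangle, and it is far from automatic that $L^s$ converges in $\mathbb{P}^2$ to the Clifford torus. Establishing this convergence appears to require extending the analytic construction of \cite{CJL} to nodal anti-canonical divisors, which is a substantial task. An alternative would be to prove a purely Floer-theoretic rigidity statement — a monotone Lagrangian torus in $\mathbb{P}^2$ whose superpotential equals $x + y + \frac{1}{xy}$ is Hamiltonian isotopic to the Clifford torus — and then invoke Vianna's classification \cite{V3} together with the observation that all exotic Vianna tori and the Chekanov torus have superpotentials with strictly more than three monomial terms, so that the three-term superpotential uniquely singles out the Clifford class.
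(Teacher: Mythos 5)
First, be aware that the statement you are addressing is stated in the paper as a conjecture: the paper offers no proof, only a heuristic. That heuristic takes a different route from yours: using the three tropical curves of multiplicity $3$ (Example \ref{168}), one produces, by gluing cylinders with a pair-of-pants as the divisor collapses, a degree-one rational curve $H$ tangent to $D$ at a $3$-torsion point and avoiding $L_{u_0}$; then $L_{u_0}\subseteq \mathbb{P}^2\setminus(H\cup\sigma H\cup\sigma^2 H)\cong(\mathbb{C}^*)^2\cong T^*T^2$, and the nearby Lagrangian conjecture for $2$-tori \cite{DGI} forces $L_{u_0}$ to be Hamiltonian isotopic to the zero section, which corresponds to the monotone moment fibre. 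Your first route --- a $\mathbb{Z}_3$-equivariant degeneration $D_s\to\{xyz=0\}$ of the divisor itself, with the flux of the resulting Lagrangian isotopy killed because the induced $\mathbb{Z}_3$-action on $H^1(L;\mathbb{R})\cong\mathbb{R}^2$ has no nonzero fixed vectors --- is genuinely different, and the symmetry-kills-flux step is sound in principle. But, as you say yourself, the convergence of $L^s$ to the Clifford torus as $s\to 0$ would require extending the analysis of \cite{CJL} to nodal anti-canonical divisors, and this is precisely the hard content that neither you nor the paper supplies. Both arguments therefore remain heuristics, with the unproved step located in roughly the same place (control of the special Lagrangian geometry as $D$ degenerates); the paper outsources the final rigidity to \cite{DGI}, while you outsource it to a flux computation.

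The concrete error is in your fallback route. Vianna \cite{V3} constructs infinitely many pairwise non-Hamiltonian-isotopic monotone Lagrangian tori in $\mathbb{P}^2$; he does not classify all monotone Lagrangian tori up to Hamiltonian isotopy, and no such classification is known. Consequently, the statement ``a monotone torus in $\mathbb{P}^2$ with superpotential $x+y+\frac{1}{xy}$ is Hamiltonian isotopic to the Clifford torus'' does not follow from observing that the known exotic tori have superpotentials with more than three monomials: there could a priori exist a monotone torus outside Vianna's list sharing the Clifford superpotential. That rigidity statement is itself open, so this branch of your argument cannot close the proof and should be presented, as the paper presents its own argument, as evidence rather than proof.
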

	Here we provide a heuristic argument why the conjecture is expected to be true: 
	There are three tropical curves of multiplicity $3$ in Figure \ref{fig:CPS}. As explained in Example \ref{168}. these correspond to the tangent of the nine $3$-torsion points of $D$. As $D$ is close enough to a nodal curve, the special Lagrangian fibration on $X$ is collapsing \cite{CJL2}. For each above tropical curve, one can reconstruct the holomorphic curves by gluing three cylinders with a pair-of-pants as in \cite{P6} to get three rational curve avoiding $L_{u_0}$. Let $H$ is one of them and $H$ is of degree one since $H. D=3$. Then $L_{u_0}\subseteq \mathbb{P}^2\backslash (H\cup \sigma H\cup \sigma^2 H)\cong (\mathbb{C}^*)^2$. With a suitable Moser's trick, then $L_{u_0}$ is Hamiltonian isotopy to a monotone torus in $(\mathbb{C}^*)^2\cong T^*T^2$ with the standard symplectic form. The nearby Lagrangian conjecture for $2$-tori \cite{DGI} states that every exact Lagrangian in $T^*T^2$ is Hamiltonian isotopic to the zero section which corresponds to  the unique monotone moment fibre of $\mathbb{P}^2$.

	\begin{figure} \label{fig:1051}
		\begin{center}
			\includegraphics[height=1.5in,width=3in]{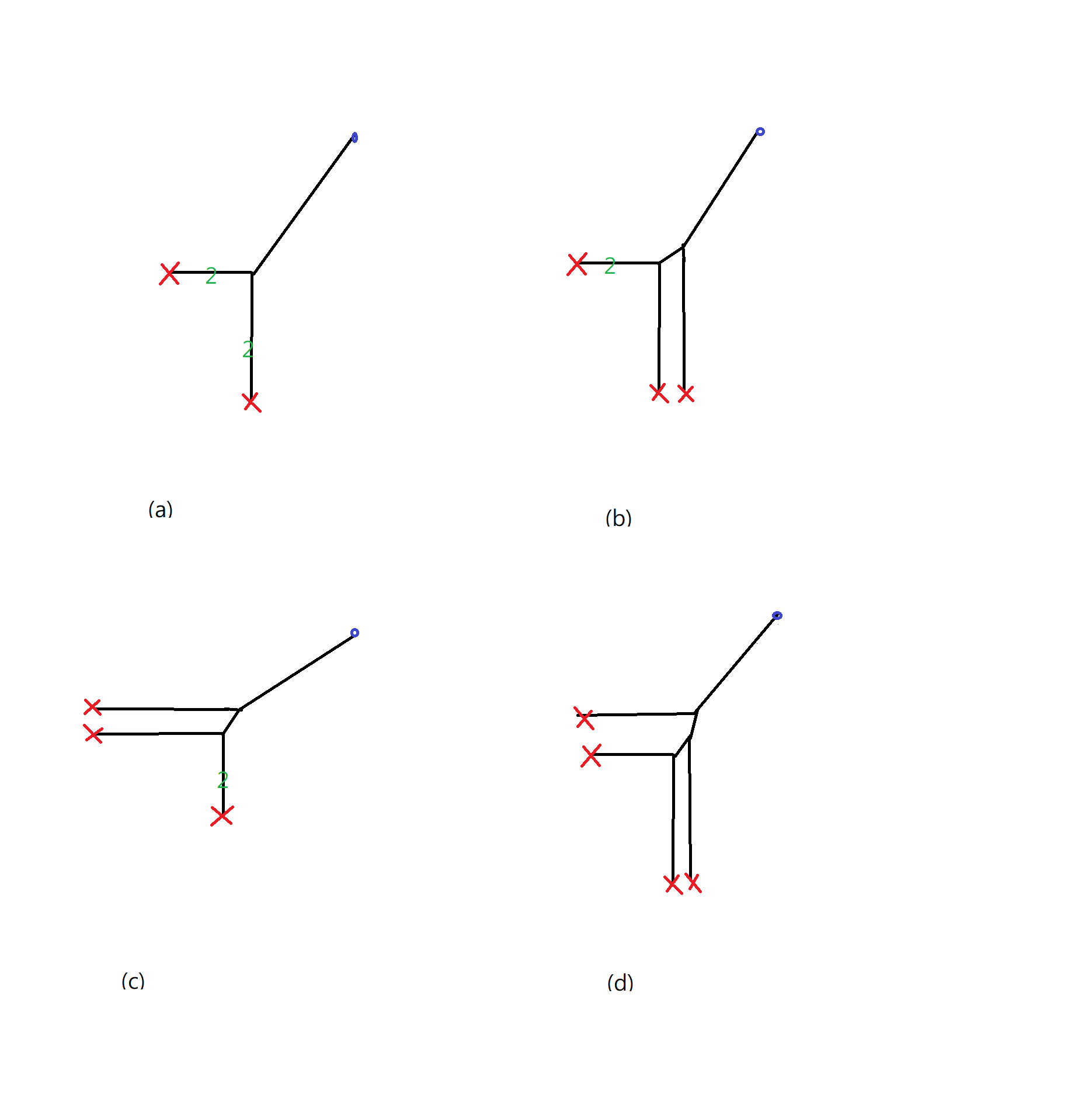}
			\caption{All the admissible tropical discs contribute to the calculation of Example \ref{168}. The bounded edges are contracted to a point and all the tropical discs have the same image.}
		\end{center}
	\end{figure}   
	
	\subsection{Connection to other works} 
	\subsubsection{From open to Relative Gromov-Witten Invariants of $\mathbb{P}^2$}
	Consider the wall-structure constructed by Carl-Siebert-Pumperla in the case of $(Y=\mathbb{P}^2, D)$, where $D$ is a smooth cubic curve. Let $f_{out}$ be the product of all slab functions associated to walls with tangent monodromy invariant near infinity. Then $f_{out}\in \mathbb{C}[[t]][x]$, where $x=z^{(-m_{out},1)}, t=z^{0,1}\in \mathbb{C}[\Lambda\oplus \mathbb{Z}]$. Here $m_{out}$ is the primitive vector pointing toward infinity and monodromy invariant around infinity. Tim Graefnitz proved the enumerative interpretation of the coefficients of $f_{out}$:
	\begin{thm}\cite{G8} \label{Gabele}
		With the above notation, then 
		\begin{align*}
			\log{f_{out}}=\sum_{d\geq 0}^{\infty} 3d \cdot  N^{\mathbb{P}^2/D}_{0,d} \cdot t^{3d}x^{3d}.
		\end{align*} Here $N^{\mathbb{P}^2/D}_{0,d}$ is the relative Gromov-Witten invariant of $1$-marked stable maps to $\mathbb{P}^2$ of genus zero, degree $d$ and maximal tangency with $D$ at a single unspecified point. 
	\end{thm}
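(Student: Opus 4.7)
The plan is to transfer the statement from the Carl–Pumperla–Siebert setup into the complex affine geometry of the special Lagrangian fibration and then assemble the correspondence theorems and the compactification of disc moduli developed earlier in the paper.

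First, by Theorem \ref{169} the CPS affine structure on $\mathbb{R}^2$ coincides with the complex affine structure on $B$ induced by the Collins–Jacob–Lin fibration. Under this identification, the walls $\mathfrak{d}$ whose tangent is monodromy-invariant near infinity correspond to the admissible rays introduced in Section \ref{1}, and the slab functions $f_\mathfrak{d}$ match (after the substitutions $t=T^{\omega(\beta_0)}z^{\partial\beta_0}$, $x=z^{-m_{out}}$) with the scattering functions produced by Theorem \ref{complete}, since both are the unique consistent scatterings extending the initial data of Definition \ref{intial}.

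Second, expanding the product ray by ray and using the definition of $\tilde\Omega^{\mathrm{trop}}$:
\begin{align*}
\log f_{out}=\sum_{\mathfrak{d}\ \mathrm{adm.}}\log f_\mathfrak{d}=\sum_{\mathfrak{d}}\sum_{k\geq 1}k\,\tilde\Omega^{\mathrm{trop}}(k\gamma_\mathfrak{d};u)\,T^{k\omega(\gamma_\mathfrak{d})}z^{k\gamma_\mathfrak{d}}.
\end{align*}
For each admissible $\mathfrak{d}$, the class $\bar\gamma_\mathfrak{d}\in H_2(\mathbb{P}^2;\mathbb{Z})\cong\mathbb{Z}\langle H\rangle$ is a positive multiple of $H$ with $\bar\gamma_\mathfrak{d}\cdot D=3\deg\bar\gamma_\mathfrak{d}$, so under the substitutions above the contribution of $k\gamma_\mathfrak{d}$ becomes a multiple of $t^{3k\deg\bar\gamma_\mathfrak{d}}x^{3k\deg\bar\gamma_\mathfrak{d}}$. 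Grouping by total degree $d$ and applying Theorem \ref{116} ray-by-ray to replace $\tilde\Omega^{\mathrm{trop}}$ by the open Gromov–Witten invariant $\tilde\Omega$ (in the limit along $l$, which exists by the corollary following Theorem \ref{finite trop rel}), the coefficient of $t^{3d}x^{3d}$ becomes $3d\cdot N^{\mathbb{P}^2/D,\mathrm{trop}}_{0,d}$ by definition of the tropical relative invariant.

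Finally, the hardest step is to identify $N^{\mathbb{P}^2/D,\mathrm{trop}}_{0,d}$ with the genuine algebraic relative Gromov–Witten invariant $N^{\mathbb{P}^2/D}_{0,d}$; this is the $\mathbb{P}^2$ case of Conjecture \ref{open to closed}. My plan is to use Lemma \ref{1000}: along an admissible ray $l$ the one-parameter family $\bigcup_{t\geq 0}\mathcal{M}_\gamma(X,L_{u_t})$ compactifies inside $\mathcal{M}_{\bar\gamma}(\mathcal{Y},\mathcal{L})$ to stable maps whose $p$-image is a rational curve with maximal tangency $3d$ to $D$ at a $3d$-torsion point. By Lemma \ref{finite rays} and the classical finiteness of maximally tangent rational curves, both sides are finite counts, so the identification reduces to a weighted-count matching. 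The main obstacles I anticipate are (i) \emph{surjectivity} — showing every maximally tangent rational curve of degree $d$ arises as such a degenerate limit — and (ii) matching the multiplicities, which I would carry out by a local transversality computation near $D$ in the Calabi model, using the regularity of the elementary holomorphic disc underlying the proof of Theorem \ref{115}. The small-degree sanity checks for $d=1,2$ are already handled in Example \ref{168}, consistent with Takahashi's counts.
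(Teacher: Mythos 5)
This statement is quoted from Gabele's work \cite{G8}; the paper offers no proof of it, so there is nothing internal to compare against. What matters is whether your proposed derivation closes, and it does not. Your first two steps (identifying the CPS walls and slab functions with the scattering diagram on $B$ via Theorem \ref{169} and Theorem \ref{complete}, then expanding $\log f_{out}$ ray by ray and regrouping by degree to get $3d\cdot N^{\mathbb{P}^2/D,\mathrm{trop}}_{0,d}$) are fine and consistent with how the paper organizes this material. The problem is the final step: identifying $N^{\mathbb{P}^2/D,\mathrm{trop}}_{0,d}$ with the algebraic relative invariant $N^{\mathbb{P}^2/D}_{0,d}$ is exactly Conjecture \ref{open to closed}, which the paper explicitly leaves open. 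Lemma \ref{1000} only tells you that the degenerate limits of the disc families are maximally tangent rational curves; it gives neither surjectivity (every maximally tangent rational curve of degree $d$ arises as such a limit) nor the matching of multiplicities against the virtual count defining $N^{\mathbb{P}^2/D}_{0,d}$. You name both obstacles yourself but do not resolve them, and the ``local transversality computation near $D$ in the Calabi model'' you invoke only controls the class $\beta_0$ disc, not the higher-degree configurations. So the argument as written proves nothing beyond what is already conjectural.

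There is also a structural issue of circularity relative to how the paper uses this theorem. The paper's logic runs: Gabele's theorem (proved by algebraic/tropical methods on the Gross--Siebert side, independently of any Lagrangian Floer theory) plus Theorem \ref{116} and Theorem \ref{169} \emph{implies} the formula $N^{\mathbb{P}^2/D}_{0,d}=\sum_{(\gamma;l):\partial\gamma=-d\partial\beta_0}\tilde{\Omega}(\gamma;l)$. Your proposal inverts this: you want to establish the open-to-relative identification first and deduce Gabele's statement from it. That inversion is only legitimate if you can prove the open-to-relative correspondence by independent means, which is the hard unproven content. If your goal is to reprove Gabele's theorem, the correct route is entirely on the algebraic side (degeneration to the toric boundary and the tropical correspondence for log Gromov--Witten invariants), not through the special Lagrangian fibration.
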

	This can also interpreted as a tropical correspondence theorem of $(\mathbb{P}^2,D)$. Recall that Theorem \ref{169} identify the complex affine structure of the special Lagrangian fibration on $\mathbb{P}^2\backslash D$ and the affine structure in Carl-Pumperla-Siebert. In particular, this identifies the scattering diagram discussed in Section \ref{tropical geometry} and the one in \cite{CPS} and also the corresponding slab functions in the case of $\mathbb{P}^2$. Then Theorem \ref{116}, Theorem \ref{169} and Theorem \ref{Gabele} together implies the following folklore conjecture between open and closed invariants which echos the expectation in the work of Gross-Siebert \cite{GS3}\cite{GS4}:
	\begin{thm} 
		The algebraic relative Gromov-Witten invariant $N^{\mathbb{P}^2/D}_{0,d}$ can be computed via open Gromov-Witten invariants 
		\begin{align*}
			N^{\mathbb{P}^2/D}_{0,d}=\sum_{(\gamma;l):\partial\gamma=-d\partial \beta_0} \tilde{\Omega}(\gamma;l).
		\end{align*}
	\end{thm}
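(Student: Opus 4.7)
The plan is to chain the three inputs already assembled in the excerpt: Theorem~\ref{169} (identification of the complex affine structure with the Carl-Pumperla-Siebert one), Theorem~\ref{116} (tropical/holomorphic correspondence for Maslov index zero), and Theorem~\ref{Gabele} (Gabele's enumerative reading of $f_{out}$). Each one converts the next piece of data, and the compositions meet in the middle.

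First I would use Theorem~\ref{169} to transport Gabele's formula from the Carl-Pumperla-Siebert side to the special Lagrangian side. Since the two affine structures coincide, the consistent scattering diagrams produced by Theorem~\ref{complete} on each side must coincide as well: they are both the unique completion of the same initial scattering diagram (the rays emanating from the three focus-focus singularities). In particular, the product $f_{out}$ appearing in Theorem~\ref{Gabele} is the product, over all admissible rays $l_{\mathfrak{d}}$ in the scattering diagram $\mathfrak{D}$ of Section~\ref{tropical geometry}, of the slab functions $f_{\mathfrak{d}}$. Taking logarithms and using the definition of $\tilde{\Omega}^{trop}$, we may rewrite
\[
\log f_{out} \;=\; \sum_{(\gamma;l)\text{ admissible}}\; \sum_{d\geq 1} d\,\tilde{\Omega}^{trop}(d\gamma;l)\,(T^{\omega_{TY}(\gamma)} z^{\gamma})^d,
\]
where the outer sum is over admissible rays (labelled by the primitive class $\gamma$).

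Next I would apply Theorem~\ref{116} to replace each tropical invariant $\tilde{\Omega}^{trop}(\gamma;l)$ by the open Gromov-Witten invariant $\tilde{\Omega}(\gamma;l)$; this invariant is well defined on admissible rays as the limit towards infinity, since Lemma~\ref{finite rays} and Theorem~\ref{finite trop rel} ensure that only finitely many such $(\gamma,l)$ can contribute with prescribed intersection with $D$. Substituting back gives an expression for $\log f_{out}$ purely in terms of the $\tilde{\Omega}(\gamma;l)$. Theorem~\ref{Gabele} provides the other expression,
\[
\log f_{out} = \sum_{d\geq 0} 3d \cdot N^{\mathbb{P}^2/D}_{0,d}\, t^{3d} x^{3d}.
\]
Equating coefficients under the identification $x = z^{(-m_{out},1)}$, $t = z^{(0,1)}$ (so that a monomial $T^{\omega_{TY}(\gamma)}z^{d\gamma}$ with $\partial(d\gamma) = -d\,\partial\beta_0$ is matched with the correct power of $t^{3d} x^{3d}$), the contributions from primitive admissible classes and their multiples on the left reorganize exactly into the sum on the right.

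The main obstacle, and really the only one, is the bookkeeping in this last matching: one must verify that the combinatorial factor $d$ from the slab-function expansion combines with the various multi-cover contributions from non-primitive admissible classes to reproduce the factor $3d$ appearing in Gabele's formula, and that the indexing set $\{(\gamma;l)\mid \partial\gamma = -d\,\partial\beta_0\}$ in the statement accounts precisely for all admissible pairs whose total boundary winding matches the tangency order of a degree $d$ rational curve. Once this is confirmed, extracting the coefficient of $t^{3d}x^{3d}$ on both sides yields the asserted identity.
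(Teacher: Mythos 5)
Your proposal is correct and follows essentially the same route as the paper: the paper derives this theorem precisely by chaining Theorem \ref{169} (which identifies the complex affine structure, hence the scattering diagram and slab functions, with those of Carl--Pumperla--Siebert), Theorem \ref{116} (to replace the tropical invariants by the open Gromov--Witten invariants $\tilde{\Omega}(\gamma;l)$), and Theorem \ref{Gabele}. Your additional attention to the bookkeeping of the $3d$ factor and the indexing of admissible rays is more detail than the paper itself supplies, which states the result as an immediate consequence of the three cited theorems.
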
 
	The first of this kind of results was due to Chan \cite{C2} by direct comparison of the Kuranishi structures of the relevant moduli spaces.  Here, we outline a complete new methodology. Indeed the idea here is also carried out such correspondence for all Looijenga pairs \cite{BCHL}. 
	It worth mentioned that recently Garrel-Ruddat-Siebert \cite{GRS} proved that the period integral of the (SYZ-fiber-normalized) volume form over the SYZ section in the Carl-Pumperla-Siebert mirror of $\mathbb{P}^2$ yields precisely the expression given in Theorem \ref{Gabele}.

	When $\partial \gamma=-d\partial \beta_0$, one can compute the quadratic refinement $c(\gamma;u)=(-1)^d$ via induction and relation 
	\begin{align*}
		c(\gamma_1,\gamma_2;u)=(-1)^{\langle \gamma_1,\gamma_2\rangle}c(\gamma_1;u)\c(\gamma_2;u).
	\end{align*} Together with \cite[Proposition 2.3.2, Lemma 1.2.2 ]{B5}, one has the some partial result on the open version of the Gopakumar-Vafa conjecture
	\begin{prop}
		Conjecture \ref{12} holds for $u$ near infinity and $\partial \gamma=-d\partial \beta_0$. 
	\end{prop}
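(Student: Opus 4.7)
The plan is to combine the correspondence theorem just established with the integrality results of Bousseau for log/relative Gromov–Witten invariants of $(\mathbb{P}^2,D)$ cited as Proposition 2.3.2 and Lemma 1.2.2 of \cite{P4}. The previous theorem identifies
\[
N^{\mathbb{P}^2/D}_{0,d}\;=\;\sum_{(\gamma;l):\,\partial\gamma=-d\partial\beta_0}\tilde{\Omega}(\gamma;l),
\]
so I would first argue that for $u$ sufficiently close to infinity the right-hand side collapses to a single, well-controlled sum over the finitely many admissible rays provided by Lemma~\ref{finite rays} and Theorem~\ref{finite trop rel}. In particular, for a fixed admissible ray $l$ the limit $\tilde{\Omega}(\gamma;l)$ exists and can be identified, via parallel transport and monodromy-invariance of $\partial\gamma$, with the family of open invariants $\tilde\Omega(d\gamma;u)$ appearing on the left-hand side of the multiple cover formula (\ref{13}).

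Next I would handle the quadratic refinement. The statement already asserts $c(\gamma;u)=(-1)^d$ for classes with $\partial\gamma=-d\partial\beta_0$; this follows by induction on $d$ using the multiplicativity
\[
c(\gamma_1+\gamma_2;u)=(-1)^{\langle\gamma_1,\gamma_2\rangle}c(\gamma_1;u)\,c(\gamma_2;u),
\]
together with the initial case from Lemma~\ref{initial discs}. With $c(\gamma;u)=(-1)^d$ fixed, the Möbius inversion (\ref{multiple cover}) of Conjecture~\ref{12} specializes to a universal formula expressing $\Omega(d\gamma;u)$ as a $\mathbb{Z}$-linear combination of $\tilde\Omega(\frac{d}{k}\gamma;u)/k^2$ with signs $(-1)^{d/k}\mu(k)$.

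The key step is then to recognize that the formula produced by Möbius-inverting (\ref{13}) under $c=(-1)^d$ is precisely the multiple cover relation that Bousseau establishes between $N^{\mathbb{P}^2/D}_{0,d}$ and an integer BPS-type invariant in Proposition~2.3.2 and Lemma~1.2.2 of \cite{P4}. Translating through the correspondence theorem and the identification of affine structures from Theorem~\ref{169}, the tropical invariants $\tilde\Omega(d\gamma;u)$ satisfy the same multiple cover law as the $N^{\mathbb{P}^2/D}_{0,d}$, so the integer produced on the BPS side of \cite{P4} is exactly $\Omega(d\gamma;u)$. Integrality therefore follows.

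The main obstacle I anticipate is bookkeeping rather than geometric: one must carefully match the sign conventions and the exact shape of the multiple cover formula between Bousseau's BPS expansion for the log invariants and the open Gopakumar–Vafa ansatz (\ref{13}), and verify that the sum over admissible rays $l$ representing the given boundary class does not introduce additional non-integral contributions for $u$ near infinity. Once the asymptotic region is chosen small enough that Theorem~\ref{finite trop rel} and Lemma~\ref{finite rays} control which rays contribute, this reduces to a purely arithmetic comparison of two explicit multiple cover formulas with matching signs, and the proposition follows.
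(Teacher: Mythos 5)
Your proposal follows essentially the same route as the paper, which likewise obtains the proposition by computing the quadratic refinement $c(\gamma;u)=(-1)^d$ inductively from the multiplicativity relation and then importing the integrality statements of Proposition 2.3.2 and Lemma 1.2.2 of Bousseau through the identification of $\sum_{(\gamma;l)}\tilde{\Omega}(\gamma;l)$ with the relative invariants $N^{\mathbb{P}^2/D}_{0,d}$. The one point to keep straight --- which you already flag at the end --- is that Conjecture \ref{12} at a fixed $u$ near infinity concerns the invariants attached to a single admissible ray (equivalently, a single torsion point of $D$), so the multiple cover comparison must be made with Bousseau's per-ray contributions rather than only with the total sum over rays.
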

	
	\subsubsection{Special Lagrangian as stable objects}
	It is generally a hard question to construct the stability conditions for Fukaya type categories and analyze the stable objects. A well-known (and vague) folklore conjecture motivated by the work of Thomas-Yau \cite{TY2} is the following:
	\begin{conj} \label{stab}
		The complex structures are stability conditions for Fukaya categories and special Lagrangians are stable. 
	\end{conj}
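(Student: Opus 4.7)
The plan is first to make the conjecture precise and then reduce it, in the setting of Section \ref{1}, to a combinatorial stability statement on $B$. The natural central charge is $Z(L,b)=\int_L e^{b}\Omega$, and on a special Lagrangian fibre $L_u$ the condition $\mbox{Im}(\Omega|_{L_u})=0$ forces $\arg Z(L_u,b)$ to take a definite phase $\theta(u)\in \mathbb{R}/\pi\mathbb{Z}$. The heart of the putative $t$-structure should be spanned by the fibres $L_u$ equipped with their unitary local systems, and the phase function $\theta$ should realize a Bridgeland slicing $\mathcal{P}(\theta)$ whose simple objects of phase $\theta$ are exactly the special Lagrangians of that phase.

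The second step is to identify the potentially destabilizing subobjects of a special Lagrangian torus fibre. Any distinguished triangle $L'\to L_u \to L''$ in the Fukaya heart should come from a Lagrangian surgery, which in the present geometry is performed along a holomorphic disc with boundary on $L_u$. By Lemma \ref{11} such discs only exist when $u$ lies on a wall $l_\gamma \subset B$ where $\mbox{Im}Z_\gamma(u)=0$, and the wall-crossing identity behind Theorem \ref{66} shows that the central charge of the resulting mapping cone differs from that of $L_u$ by a transformation of the form (\ref{2016}). Combining this with the tropical/holomorphic correspondence (Theorem \ref{116}) and the finiteness in Lemma \ref{well-defined}, one sees that near any fixed $u$ only finitely many candidate destabilizing slopes arise, so the slope inequality $\theta(L')\le \theta(L_u)$ can in principle be verified wall by wall, and the scattering-diagram compatibility in Theorem \ref{complete} guarantees a consistent global phase structure.

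The main obstacle, and the reason this is listed as a vague folklore conjecture, is bridging from the torus-fibre subcategory to arbitrary objects of the Fukaya category and establishing the stability condition itself. Concretely, one still has to (i) construct the heart with the support property on $\mathcal{F}(X)$, (ii) show every object admits a Harder--Narasimhan filtration whose factors are represented by special Lagrangians, and (iii) prove convergence of the Lagrangian mean curvature flow from a stable Lagrangian to its special representative, in the spirit of Thomas--Yau \cite{TY2}. Steps (ii) and (iii) are essentially the Thomas--Yau program and remain wide open even in compact Calabi--Yau surfaces. What the present paper contributes is a concrete combinatorial model, via the complex affine structure on $B$ and the scattering diagram of Theorem \ref{complete}, for the stability structure on the subcategory generated by special Lagrangian torus fibres; I would therefore propose to first establish the conjecture for this subcategory, using the wall-crossing formalism above, and to leave the extension to general objects as a separate and considerably deeper question tied to the analytic Thomas--Yau picture.
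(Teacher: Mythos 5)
This statement is stated in the paper as a folklore \emph{conjecture}, not a theorem: the paper offers no proof of it, and in fact immediately remarks that the statement is false in general (it fails for some non-algebraic K3 surfaces, citing \cite{MW}). What the paper actually supplies is evidence in the single example $Y=\mathbb{P}^2$: via Bousseau's identification \cite{P3} of a locus $U$ in the stability manifold of $D^b\mathfrak{Coh}(\mathbb{P}^2)$ with the base $B_0$ of the special Lagrangian fibration, the scattering diagram built from counts of semistable sheaves of phase zero matches the one built from open/relative Gromov--Witten invariants, so classes supporting semistable objects correspond to relative classes supporting holomorphic discs after hyperK\"ahler rotation. That is a consistency check on the level of numerical invariants, not a construction of a stability condition on a Fukaya category.

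Your proposal is therefore not comparable to a proof in the paper, because there is none; and taken on its own terms it has a genuine gap that you yourself identify. The wall-by-wall slope analysis you describe (via Lemma \ref{11}, Theorem \ref{66}, Theorem \ref{116}, Lemma \ref{well-defined}) only controls holomorphic discs with boundary on torus fibres; it does not produce a heart with the support property on the Fukaya category, does not show that the cone of a Lagrangian surgery is represented by a special Lagrangian, and does not address Harder--Narasimhan filtrations or convergence of Lagrangian mean curvature flow --- precisely the open core of the Thomas--Yau program \cite{TY2}. Moreover, since the conjecture as literally stated is known to fail without further hypotheses, any argument that does not isolate and use such hypotheses (e.g.\ algebraicity, or restriction to the subcategory generated by fibres) cannot close. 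Your suggestion to first establish a stability structure on the subcategory generated by special Lagrangian torus fibres is a reasonable and more modest goal, and it is closer in spirit to what the paper's evidence actually supports, but as written it remains a program rather than a proof.
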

	\begin{rmk}
		This conjecture is not always true, for instance, it does not hold on some non-algebraic K3 surfaces \cite{MW}. However, it is still interesting to ask when the conjecture will hold. 
	\end{rmk}
	Let $Stab$ denote stability manifold of $D^b\mathfrak{Coh}(\mathbb{P}^2)$, the derived category of coherent sheaves of $\mathbb{P}^2$. Bousseau showed that there exists a $1$-dimensional complex submanifold $U\subseteq Stab$ with a diffeomorphism $\phi:U\cong B_0$, to the base of the special Lagrangian of the complement of a smooth cubic in $\mathbb{P}^2$ as affine manifolds \cite{P3}.
	The affine lines of the former are the loci of the form $\mbox{Re}Z^{\sigma}_{\gamma}=c \in \mathbb{R}$ and the affine lines of the later is also in the similar form. Moreover, the charge lattices are isomorphic. Here $Z^{\sigma}_{\gamma}$ is the central charge of some charge $\gamma$ in the charge lattice with respect to the stability condition $\sigma\in Stab$. 
	There are natural scattering diagrams on $U$ and $B_0$: the former is from the counting of semi-stable sheaves of phase zero and the later is the relative Gromov-Witten invariants in Theorem \ref{Gabele}. One interesting result proved by Bousseau is the following:
	\begin{thm} \cite{P3}
		The two scattering diagrams above coincide. 
	\end{thm}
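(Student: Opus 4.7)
The plan is to invoke the Kontsevich--Soibelman uniqueness theorem (Theorem \ref{complete}) after matching the initial data of the two scattering diagrams. Both diagrams live on the same affine manifold once one identifies $U$ with $B_0$ via $\phi$, and by Theorem \ref{169} the affine structure on $B_0$ is the Carl--Pumperla--Siebert one. The charge lattice on $U$ parametrizing central charges $Z^\sigma_\gamma$ is isomorphic to the lattice $\Lambda_c$ on $B_0$, and the rays on each side are defined by the same analytic condition $\mathrm{Re}\, Z_\gamma = c$, so as sets of marked rays the two diagrams live in the same ambient combinatorial object. Provided both are consistent (i.e.\ $\theta_{\mathfrak D,u} = \mathrm{id}$ at each $u\in B_0$) and both are complete with no two distinct rays sharing a support line, uniqueness will force them to agree.

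The first substantive step would be to verify consistency of the sheaf-theoretic scattering diagram. On the $D^b\mathfrak{Coh}(\mathbb P^2)$ side, consistency is essentially the Joyce--Song/Kontsevich--Soibelman wall-crossing formula for Donaldson--Thomas type invariants of semistable objects of phase zero; one reformulates the generating series of DT invariants as a product of symplectomorphisms of the quantum torus, and the wall-crossing identities precisely say that going around a generic point produces the identity. On the tropical/GW side, consistency is Theorem \ref{complete} applied to $\mathfrak{D}_{in}$, the initial scattering diagram built from the three focus--focus singularities.

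The second and decisive step is to match the initial rays. The plan is to identify the ``initial" rays of the sheaf diagram as those coming from the three spherical objects $\mathcal O, \mathcal O(1), \mathcal O(2)$ in Beilinson's exceptional collection: each contributes a pair of rays (object and its shift) with slab function $1+T^{Z}z^\gamma$ because a spherical sheaf has DT invariant $1$, and there are exactly three such pairs because $\chi(\mathbb P^2)=3$. On the GW side, the initial rays come from the three focus--focus singularities, each contributing two rays with slab function of the form $1+T^{\omega_{TY}(\gamma_\pm)}z^{\gamma_\pm}$ by Definition \ref{intial} and Lemma \ref{initial discs}. Thus both sides have six initial rays with identical slab functions; what must be checked is that under the identification of charge lattices the central charge $Z^\sigma$ of $\mathcal O(k)$ matches $\int_\gamma \Omega$ for the Lefschetz thimble attached to the corresponding singularity. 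This is where one uses Bousseau's description of $\phi$ explicitly, together with the identification of periods of $\Omega$ with the central charge of the stability condition on $\mathbb P^2$.

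The main obstacle, as I see it, is this last identification: rigorously matching the periods on $X=\mathbb{P}^2\setminus D$ with the central charges of the three exceptional objects. One must pin down the precise correspondence between the three focus--focus singularities of the CJL fibration and the three spherical generators, including the phase/sign conventions that determine the orientations of the rays. Once this identification is in place, Kontsevich--Soibelman uniqueness (both diagrams being complete with prescribed initial rays, and the scattering diagram in the plane being rigid at each order in $T$) finishes the argument with no further enumerative input needed.
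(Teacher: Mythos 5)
First, a caveat about the comparison you asked for: the paper offers no proof of this statement at all --- it is imported verbatim from Bousseau \cite{P3} --- so there is no in-paper argument to measure you against. Your strategy (verify consistency of both diagrams, match the initial data, and conclude by the uniqueness in Theorem \ref{complete}) is indeed the skeleton of the cited proof, so the route is right in outline.

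However, two of your steps conceal essentially all of the content. (1) Consistency of the sheaf-theoretic diagram is not ``essentially'' the wall-crossing formula: one must first define counting invariants of phase-zero semistable objects of $D^b\mathfrak{Coh}(\mathbb{P}^2)$ for which the Kontsevich--Soibelman/Joyce--Song identities provably hold --- Bousseau works with intersection-cohomology DT invariants and the motivic Hall algebra precisely because the naive counts do not obviously satisfy the formula. This is a theorem with real hypotheses, not a reformulation. (2) Your description of the initial data on the stability side is off in two ways. The $\mathcal{O}(k)$ are exceptional objects, not spherical ones; the distinction matters because it is rigidity ($\mathrm{Ext}^1=0$) that forces $\Omega(k\gamma)=0$ for $k\ge 2$ and hence the slab function $1+z^{\gamma}$. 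And ``three pairs because $\chi(\mathbb{P}^2)=3$'' is numerology, not an argument: the initial rays come from the line bundles $\mathcal{O}(n)$, $n\in\mathbb{Z}$, and their shifts, and they match the three focus--focus singularities of $B_0$ only after accounting for the periodicity induced by $-\otimes\mathcal{O}(3)$; more importantly, one must actually \emph{prove} that no other semistable object of phase zero contributes an initial ray near the relevant boundary region of $U$, which requires a classification of semistable objects there. Together with the central-charge/period matching that you correctly flag as the crux, these are the three load-bearing steps; until they are supplied, the uniqueness argument has nothing to act on.
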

	Recall that $Stab$ admits a natural complex structure and thus $U$ inherits one. On the other hand, $B_0$ is the complement of the discriminant locus of base of the elliptic fibration after hyperK\"ahler rotation. Thus, $B_0$ also admits a natural complex structure. Bousseau showed that $\phi:U\cong B_0$ is actually a biholomorphism with respect the above complex structures \cite[Proposition 2.2.11]{P3}. We may consider the other affine structures on $U$ with affine lines of the form $\mbox{Re}(e^{i\vartheta}Z^{\sigma}_{\gamma})=c\in \mathbb{R}$. We expect that it coincide with the corresponding affine structures of special Lagrangian fibration on $B_0$ of different phase $\vartheta\in S^1$.    	 
	
	It is well-known that the mirror of $\mathbb{P}^2$ is the Landau-Ginzburg superpotential $W=x+y+1/xy: (\mathbb{C}^*)^2\rightarrow \mathbb{C}$. In particular, Auroux-Katzarkov-Orlov proved the homological mirror symmetry $D^b\mathfrak{Coh}(\mathbb{P}^2)\cong FS(W)$, the equivalence between the derived category of coherent sheaves of $\mathbb{P}^2$ with the Fukaya-Seidel category of $W$ \cite{AKO}. Let $\bar{W}:\check{X}\rightarrow \mathbb{C}$ denote the fibrewise compactification of $W$. It is again a Lefschetz fibration and its Fukaya-Seidel category $FS(\bar{W})$ is a deformation of $FS(W)$. Seidel proved that it is actually a trivial deformation, i.e. $FS(\bar{W})\cong FS(W)\otimes \mathbb{C}[[q]]$ \cite{S8}. Therefore, one can construct the stability conditions on $FS(\bar{W})$ via the homological mirror symmetry. From the work of Bousseau, one can identify the base of the Lefschetz fibration as a subset of stability condition of $FS(\bar{W})$. In other words, each fibre of the Lefschetz fibration corresponds to a stability condition of $FS(\bar{W})$.
	
	From the the attractor flow mechanism \cite{KS2} of generalized Donaldson invariants and open Gromov-Witten invariants (the proof of Theorem \ref{116}, see also \cite{L8}) and the identification of initial data will leads to the identification of the generalized Donaldson invariants and the open Gromov-Witten invariants. 
	In particular, the classes supporting semi-stable sheaves with non-trivial countings are exactly corresponding to the relative classes that support special Lagrangian discs with non-trivial counting as holomorphic discs after hyperK\"ahler rotation. This gives some evidence to the Conjecture \ref{stab}.
	\begin{rmk}
		Recently, Graefnitz-Ruddat-Zaslow \cite{GRZ} computed the superpotentials of toric Fano surfaces via the log Gromov-Witten invariants. 
	\end{rmk}

	\subsubsection{Superpotentials of $\mathbb{P}^2$}
	Vianna use the technique of pushing in corners to construct monotone Lagrangians with different superpotentials \cite{V3}. Since the superpotential is invariant under Hamiltonian isotopies, this proved the existence of infinitely many non-Hamiltonian isotopic monotone Lagrangians in $\mathbb{P}^2$. 
	The following is a direct consequence of Theorem \ref{163} and Theorem \ref{169}.
	\begin{thm} Let $Y=\mathbb{P}^2$.
		The superpotential $W(u)$ appear in Theorem \ref{163} are those appeared in the work of Vianna \cite{V3}. In particular, all $n_{\beta}(u)\geq 0$.
	\end{thm}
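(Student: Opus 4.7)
By Theorem \ref{163} we have $n_\beta(u) = n^{trop}_\beta(u)$, so it suffices to compute the tropical superpotential $W^{trop}(u)$ in each chamber of the scattering diagram $\mathfrak{D}$ on $B_0$. By Theorem \ref{169}, for $Y = \mathbb{P}^2$ the complex affine structure on $B$ coincides with the Carl-Pumperla-Siebert structure of Figure \ref{fig:CPS}, so the computation reduces to the explicit two-dimensional combinatorial picture on that affine manifold with three singularities. The base case is Proposition \ref{sup}, which gives $W(u_0) = x + y + 1/(xy)$ at the origin; this coincides with the Hori-Vafa superpotential of $\mathbb{P}^2$, and hence with the superpotential of Vianna's standard monotone (Clifford) torus corresponding to the Markov triple $(1,1,1)$.

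For a general $u \in B_0$, the plan is to connect $u$ to $u_0$ by a generic path and apply the wall-crossing formula $W^{trop}(u_+) = \mathcal{K}_{\mathfrak{d}}\, W^{trop}(u_-)$ from the discussion after Definition \ref{broken line counting} each time the path crosses a wall $l_\mathfrak{d}$, where $\mathcal{K}_\mathfrak{d}$ is the birational cluster-type transformation of the form (\ref{2016}). On the Vianna side, the monotone Lagrangian tori in $\mathbb{P}^2$ are indexed by Markov triples, and their superpotentials are related by cluster mutations of exactly the same shape (as explained by Galkin-Usnich and in \cite{V3}). Matching the two mutation trees starting from the common seed $x + y + 1/(xy)$ yields a bijection between chambers of $\mathfrak{D}$ and Markov triples under which $W(u)$ equals Vianna's superpotential for the corresponding triple. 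Non-negativity $n_\beta(u) \geq 0$ is then automatic, since each Vianna superpotential is a Laurent polynomial with non-negative integer coefficients, inherited from the $(1,1,1)$ seed and preserved by each mutation step.

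The main obstacle will be the combinatorial matching of the two mutation trees. Concretely, one must verify that the three initial walls emanating from the singularities in Figure \ref{fig:CPS}, together with their Kontsevich-Soibelman completion (Theorem \ref{complete}), produce slab functions whose primitive classes and exponents implement precisely the Markov mutations used by Vianna. This requires carefully tracking the affine monodromy around each singularity, the sign conventions built into (\ref{2016}), and the quadratic refinement, so that the scattering-diagram mutation formula agrees with Vianna's cluster mutation on the nose. Once this dictionary is pinned down at the walls adjacent to the chamber of $u_0$, induction on the combinatorial distance from that chamber propagates the identification to all of $B_0$ and completes the proof.
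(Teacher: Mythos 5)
Your overall route is the one the paper takes: the paper's entire proof of this theorem is the single sentence that it is a direct consequence of Theorem \ref{163} (the equality $n_\beta(u)=n^{trop}_\beta(u)$) and Theorem \ref{169} (the identification of the complex affine structure with the Carl--Pumperla--Siebert one), and your proposal fills in the details that are left implicit: the base case $W(u_0)=x+y+\frac{1}{xy}$ from Proposition \ref{sup}, propagation between chambers by the wall-crossing formula, and the matching of the scattering-diagram mutations with the Markov-triple cluster mutations underlying Vianna's potentials. That elaboration is essentially the intended argument, so on the main line you and the paper agree.

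The one step that does not hold up as written is your justification of positivity. Non-negativity of the coefficients is not ``automatic'' from the mutation steps: the wall-crossing transformation (\ref{2016}) sends $z^{\partial\gamma'}$ to $z^{\partial\gamma'}f_{\gamma}(u)^{\langle\gamma',\gamma\rangle}$, and when $\langle\gamma',\gamma\rangle<0$ the expansion of $(1+T^{\omega(\gamma)}z^{\partial\gamma})^{\langle\gamma',\gamma\rangle}$ has coefficients of both signs, so a Laurent polynomial with non-negative coefficients need not remain one after a single mutation; the Laurent property and positivity of the mutated potentials is a verified feature of this particular family, not a formal consequence of the mutation rule. The correct way to close the argument is to deduce $n_\beta(u)\geq 0$ from the identification with Vianna's superpotentials, which are seen to be Laurent polynomials with non-negative integer coefficients by inspection of the explicit formulas in \cite{V3}. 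With that substitution your proof is complete and matches the paper's (unwritten) argument.
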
 This natural inspires the following conjecture.
	\begin{conj} Given $\omega_i$, $i\gg 1$. 
		There exits a unique monotone special Lagrangian torus fibre in each chamber in $\mathcal{U}_i\subseteq B_0$. 
	\end{conj}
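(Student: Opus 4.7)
The plan is to reduce the monotonicity condition to a finite linear system in the symplectic affine coordinates on $B$ and then solve it chamber by chamber. Recall that $L_u$ is monotone with respect to $\omega_i$ exactly when $\omega_i(\beta_j)=\omega_i(\beta_l)$ for all Maslov index two classes $\beta_j,\beta_l$ with $n_{\beta_j}(u),n_{\beta_l}(u)\neq 0$.

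First, by Theorem \ref{163} combined with Lemma \ref{well-defined}, for each chamber $\mathcal{C}\subseteq \mathcal{U}_i$ there is a finite set $\{\beta_1,\ldots,\beta_k\}$ of Maslov index two classes (with $\beta_j\cdot D=1$) whose contributions to the superpotential are locally constant and nonzero on $\mathcal{C}$. These are precisely the classes realized by broken lines ending in $\mathcal{C}$, and they determine the monotonicity equations.

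Next, I would exploit the fact that, under the identification $H_1(L_u,\mathbb{R})\cong T^*_uB$ coming from the symplectic affine structure (dual to the complex affine structure used throughout the paper), the gradient of $u\mapsto \omega_i(\beta)$ is $\partial\beta$. Hence each difference $\omega_i(\beta_j)-\omega_i(\beta_l)$ is an affine linear function on $\mathcal{C}$ with gradient $\partial(\beta_j-\beta_l)$. To obtain a unique monotone candidate, I would show that among the contributing boundary classes at least three of the $\partial \beta_j$ span $H_1(L_u,\mathbb{R})$. This should follow by starting in a region where the initial scattering rays from Definition \ref{intial} contribute directly, producing at least three independent directions, and then propagating to all other chambers through the wall-crossing transformations $\mathcal{K}_{\mathfrak{d}}$, which preserve the spanning property. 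The resulting overdetermined linear system then cuts out a unique point $u^*$ in the affine plane containing $\mathcal{C}$.

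The hard part will be proving that $u^*$ actually lies in $\mathcal{C}$ and not in a neighboring chamber or on a wall. My plan is a continuity/deformation argument: start from a known monotone fibre (for $\mathbb{P}^2$ this is $L_{u_0}$ from the preceding proposition; for a general del Pezzo one builds an analogous fibre using the symmetries of the associated extremal rational elliptic surface), and track the monotone candidate across each wall using the wall-crossing formula $W^{trop}(u_1)=\mathcal{K}_{\mathfrak{d}_i}W^{trop}(u_2)$ from Section 5.1. Across a wall $l_{\mathfrak{d}}$ the set of contributing classes changes by an explicit multiple of $\gamma_{\mathfrak{d}}$, and one must verify that this jump forces $u^*$ to cross $l_{\mathfrak{d}}$ coherently with the chamber structure. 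A separate subtlety is that near the divisor at infinity the chambers may degenerate; here the choice $i\gg 1$ is essential, and the scaling estimates of Lemma \ref{bound} together with the asymptotic control of broken lines in Lemma \ref{MI2 asym} should ensure that every chamber of $\mathcal{U}_i$ has diameter (in symplectic affine coordinates) large enough to accommodate a unique interior solution to the linear system.
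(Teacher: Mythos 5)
This statement is a \emph{conjecture} in the paper: it appears at the very end of Section 7, stated without proof and offered only as something ``naturally inspired'' by the nonnegativity of the $n_\beta(u)$ and by Vianna's examples. There is therefore no proof in the paper to compare your proposal against, and your write-up should not present itself as a proof of a theorem; at best it is a plan of attack on an open problem.

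On its own merits the plan has the right skeleton but two genuine gaps. First, the reduction of monotonicity to ``equal areas of contributing Maslov index two classes'' is not the definition: monotonicity is the vanishing of the linear functional $\beta\mapsto\omega_i(\beta)-\lambda\mu(\beta)$ on all of $H_2(Y,L_u)$. Since $[\omega_i]=k_ic_1(Y)$ forces $\lambda=k_i/2$ on the sublattice $H_2(Y)$, the correct system is the two equations $\omega_i(\beta)=k_i$ for two Maslov index two classes whose boundaries generate $H_1(L_u,\mathbb{Z})$ --- equality of areas among contributing classes is neither obviously sufficient (their boundaries need not generate) nor the most economical formulation; your ``overdetermined system cuts out a unique point'' claim is backwards, since an overdetermined affine system generically has no solution, and you never verify consistency. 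Second, and more seriously, the step you yourself flag as hard --- that the solution point $u^*$ of the (symplectically) affine-linear system lands inside the prescribed chamber, whose walls are affine in the \emph{complex} affine structure --- is precisely the entire content of the conjecture, and the proposed continuity/wall-crossing tracking is a heuristic, not an argument. In particular, the claim that for $i\gg1$ every chamber has large diameter cannot be right: the rays of the complete scattering diagram $\mathfrak{D}$ accumulate, so chambers can be arbitrarily small (or fail to be open) in regions where walls are dense, and nothing in Lemma \ref{bound} or Lemma \ref{MI2 asym} controls this. Until that localization step is supplied, the proposal does not establish existence or uniqueness in any chamber beyond the central one for $\mathbb{P}^2$, which the paper already handles by a $\mathbb{Z}_3$-symmetry argument.
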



\bibliographystyle{amsplain}

\begin{bibdiv}
	\begin{biblist}
		\bibselect{file001}
	\end{biblist}
\end{bibdiv}

\end{document}